  \newcommand{\zzy}[1]{\textcolor{black}{#1}}
  \newcommand{\zzya}[1]{\textcolor{black}{#1}}
  \newcommand{\zzyb}[1]{\textcolor{black}{#1}}
  \newcommand{\zzyc}[1]{\textcolor{red}{#1}}
\begin{document}

\title{\zzya{Supercloseness} error estimates for the div least-squares finite element method on elliptic problems}

\titlerunning{\zzya{Supercloseness} error estimates for the least-squares method}        

\author{Gang Chen         \and
  Fanyi Yang\and
  Zheyuan Zhang 
}


\institute{Gang Chen         \and
  Fanyi Yang\and
  Zheyuan Zhang \at
  School of Mathematics, Sichuan University, Chengdu, China. \\
  \email{cglwdm@scu.edu.cn}\and\email{yangfanyi@scu.edu.cn}\and\email{zhangzheyuan@stu.scu.edu.cn}
}

\date{Received: date / Accepted: date}

\maketitle

\begin{abstract}
  In this paper we \zzy{provide some error estimates }for the div least-squares finite element method on elliptic problems. The main contribution is presenting a complete error analysis, which improves the current \emph{state-of-the-art} results. The error estimates for both the scalar and the flux variables are established by \zzy{specially} designed dual arguments with \zzy{the help of two projections: elliptic projection and H(div) projection, which are crucial to \zzya{supercloseness} estimates.} \zzy{In most cases, $H^3$ regularity is omitted to get the optimal convergence rate for vector and scalar unknowns, and most of our results require a lower regularity for the vector variable than the scalar.} Moreover, a series of \zzya{supercloseness} results are proved, which are \emph{never seen} in the previous work of least-squares finite element methods.
  
  {
  	\color{red}
  	
  We fixed some errors in the published paper ``Chen, Gang; Yang, Fanyi; 
  Zhang, Zheyuan. Supercloseness Error Estimates for the Div Least-Squares 
  Finite Element Method on Elliptic Problems. J. Sci. Comput. 103 (2025), no. 3
  3, Paper No. 79. MR4894277''. We add some results in Lemma 4.4 and fix the proof in Theorem 4.3. We also enimilate the mesh requirement in Theorem 4.3.
}

  \keywords{Least-squares finite element method \and Optimal error estimates \and \zzya{Supercloseness} error estimates}
  \subclass{M65N15 \and 65N30}
\end{abstract}

\section{Introduction}\label{intro}

The least-squares finite element method (LSFEM) is a sophisticated
numerical technique for solving a variety of partial differential equations.
Applications include among others the second-order elliptic system
\cite{Pehlivanov1994least,MR2257124,Cai2002discrete,Cai2015div,MR2177148},
the Stokes \cite{Cai1997first,Bochev2013nonconforming,Liu2013hybrid} and the Navier-Stokes
equations \cite{Bochev1997analysis}, the
linear elasticity problem \cite{Cai2004least,Bramble2001least,Starke2011analysis}, the optimal control problem \cite{Fuhrer2023least,Bochev2006least} and the  Maxwell problem \cite{Bramble2005approximation,Jagalur2013galerkin,Li2022discontinuous}.
LSFEM with discontinuous elements has also been investigated in recent
years, such as \cite{Bensow2005discontinuous,Bensow2005div,Bochev2012locally,Bochev2013nonconforming,Li2019least,Li2019sequential}.
Different from Galerkin methods, LSFEM seeks the numerical solution by
minimizing the proper functional on a proper approximation space.
Then, the error estimation under the energy norm naturally follows from the
equivalence between the energy norm and the quadratic functional.
It is noticeable that the optimal $L^2$ convergence analysis is not trivial,
especially for the flux variable \cite{MR2177148}.

In this paper, we consider the div LSFEM for the following model problem: find $u\in H^1(\Omega)$
such that
\begin{subequations}\label{org}
  \begin{gather}
    -\nabla\cdot(\sigma\nabla u)-\omega^2\eta u =f\quad\text{in }\Omega, \\
    u                                           =0\quad\text{on }\Gamma_D \quad \text{and} \quad
    \zzy{\sigma\nabla u\cdot\bm n }                          =0\quad\text{on }\Gamma_N,
  \end{gather}
\end{subequations}
where $\sigma=\sigma(\bm x)>0$ and $\eta=\eta(\bm x)\neq 0$ are smooth functions, and $\omega^2$ is not the eigenvalue of the problem:
\begin{subequations}\label{eigen}
  \begin{gather}
    -\nabla\cdot(\sigma\nabla u)-\omega^2\eta u =0\quad\text{in }\Omega, \\
    u                                           =0\quad\text{on }\Gamma_D \quad \text{and} \quad
    \zzy{\sigma\nabla u\cdot\bm n }                          =0\quad\text{on }\Gamma_N.
  \end{gather}
\end{subequations}
\zzya{
  \begin{remark}\label{rem_hiwave}
    In this paper, the case of high wavenumber is not specifically addressed. For this case, the scheme need to be modified into a form related to wavenumber, see \cite{CHEN2017145}, which is a future topic.
  \end{remark}
}
Here $\Omega \subset \mathbb R^d,d=2,3$ is an open bounded domain with a Lipschitz
boundary $\partial \Omega = \Gamma_N \cup \Gamma_D$,
where $\Gamma_N $ and $ \Gamma_D$ are two disjoint parts of $\partial \Omega$.

We introduce the flux variable $\bm q=\sigma\nabla u$ to rewrite the
problem \eqref{org} into a first-order system:
\begin{subequations}\label{mixed-org}
  \begin{gather}
    {\sigma^{-1}\bm q-\nabla u} =\bm 0,\quad -\nabla \cdot\bm q-\omega^2\eta u = f \quad \text{in }\Omega, \\
    u  =0 \quad \text{on }\Gamma_D \quad\text{and}\quad
    \bm n\cdot\bm q =0 \quad \text{on }\Gamma_N.
  \end{gather}
\end{subequations}

We consider the div LSFEM for \eqref{mixed-org} with boundary conditions enforced on the finite elements spaces.
The scalar variable $u$ is approximated by an $H^1$\zzy{-conforming} finite element space of
degree $m$ ($m\ge 1$, denoted as $\mathcal P_m$), and the flux variable $\bm{q}$
can be approximated by the BDM elements of degree $k$ ($k\ge 1$,  denoted as $\bm{\mathcal{BDM}}_k$) or
the RT elements of degree $k$ (denoted as $\bm{\mathcal{RT}}_k$).

Let us review some \emph{state-of-the-art} results for the error estimates to LSFEM. In Cai \zzy{\textit{et al.}} (1994) \cite{MR1302685}, the authors gave the error estimates in the energy norm with a convergence rate of $\min(k, m)$ for the element $\bm{\mathcal{BDM}}_k/\mathcal{P}_m$. Moreover, with the help of \zzy{$H^3$} regularity of the elliptic problem, the $L^2$ {estimates} for flux and scalar variables were improved by Bochev and Gunzburger in \cite{MR2177148}. In Cai \zzy{\textit{et al.}} (2006) \cite{MR2257124}, for $\bm{\mathcal{RT}}_{k}/\mathcal{P}_{k+1}$, the authors proved the optimal $L^2$ error estimate for the scalar variable $u$ without \zzy{$H^3$} regularity, by constructing a proper dual problem. \zzy{In Ku (2011) \cite{Ku2011SharpLE}, author established $L^2$ error estimates for flux and scalar variables with $H^{1+\alpha},\alpha>1/2$ regularity and the optimal convergence rates can be obtained with only $H^2$ regularity. In Bernkopf \textit{et al.} (2023) \cite{Bernkopf2023}, authors also gave the optimal $L^2$ error estimates for the flux variable, the scalar variable and its gradient with the help of two \zzyb{special} operators for the flux , moreover, \zzyb{their} results require lower regularity for the flux variable \zzyb{compared} to the scalar variable. In F\"{u}hrer \textit{et al.} (2022) \cite{doi:10.1137/21M1457023}, the authors used some regularization operators to obtain the optimal estimate for the scalar variable under singular data. In Liang \textit{et al.} (2023) \cite{10.1007/s10915-023-02246-x}, authors gave an optimal estimate for the scalar variable with $H^2$ regularity using nonconforming finite elements.} All \emph{state-of-the-art} $L^2$ estimates are summarized in \Cref{curr_res}.

\begin{table}[H]\Large
  \renewcommand{\arraystretch}{1}
  \centering
  \resizebox{1.0\textwidth}{!}{
    \begin{tabular}{c|c|c|c|c|c}
      \Xhline{1pt}
      {$\bm P_h$}              & {$V_h$}                 & {$\|\bm q-\bm q_h\|_0$} & {$\|\nabla\cdot(\bm q-\bm q_h)\|_0$}
                               & {$\|u-u_h\|_0$}
                               & {$\|\nabla(u-u_h)\|_0$}                                                                                        \\
      \Xhline{1pt}
      $\bm{\mathcal{BDM}}_k$   & $\mathcal P_{k-1}$      & $\zzy{k}$               & $k-1$                                & $\zzy{k}$   & $k-1$ \\
      \hline
      $\bm{\mathcal{BDM}}_k$   & $\mathcal P_{k}$        & $\zzy{k+1}$             & $k$                                  & $\zzy{k+1}$ & $k$   \\

      \hline
      $\bm{\mathcal{BDM}}_{k}$ & $\mathcal P_{k+1}$      & $\zzy{k+1}$             & $k$                                  & $\zzy{k+1}$ & $k$   \\

      \hline

      $\bm{\mathcal{RT}}_k$    & $\mathcal P_{k-1}$      & $\zzy{k}$               & $k-1$                                & $\zzy{k}$   & $k-1$ \\
      \hline
      $\bm{\mathcal{RT}}_k$    & $\mathcal P_{k}$        & $\zzy{k+1}$             & $k$                                  & $\zzy{k+1}$ & $k$   \\

      \hline
      $\bm{\mathcal{RT}}_{k}$  & $\mathcal P_{k+1}$      & $k+1$                   & $k+1$                                & $k+2$       & $k+1$ \\

      \hline

      \Xhline{1pt}
    \end{tabular}}
  \caption{The state-of-art error estimates. \zzy{In cases where $V_h = P_1$, one can only achieve suboptimal rates of convergence for $\|u-u_h\|_0$ and $\|\nabla(u-u_h)\|_0$.}}
  \label{curr_res}
\end{table}

In this paper, we present a systematical analysis of the LSFEM with both $\bm{\mathcal{BDM}}_k/\mathcal P_m$
and $\bm{\mathcal{RT}}_k/\mathcal P_m$ for different pairs of $(k, m)$.  Compared to
the previous results, the optimal error estimates under the $L^2$ norm are reached
\zzy{without requiring} $H^3$ regularity \zzy{for the continuous solution of} the elliptic system in most cases, including
the common choices $k = m, k + 1 = m$. The main techniques \zzy{lie in establishing} the error estimates between finite element solutions and \zzy{specific types of projections:  elliptic projection and H(div) projection.} In addition, some \zzya{supercloseness} \zzy{results are derived in our analysis, which are \emph{never seen} in the previous work of LSFEM.}

\zzy{We briefly} summarize our optimal estimates and \zzya{supercloseness} results in \Cref{opt} and \Cref{sup}.
\begin{table}[H]\Large
  \renewcommand{\arraystretch}{1}
  \centering
  \resizebox{1.0\textwidth}{!}{
    \begin{tabular}{c|c|c|c|c|c}
      \Xhline{1pt}
      {$\bm P_h$}              & {$V_h$}                 & {$\|\bm q-\bm q_h\|_0$} & {$\|\nabla\cdot(\bm q-\bm q_h)\|_0$}
                               & {$\|u-u_h\|_0$}
                               & {$\|\nabla(u-u_h)\|_0$}                                                                                       \\
      \Xhline{1pt}
      $\bm{\mathcal{BDM}}_k$   & $\mathcal P_{k-1}$      & $ \zzy{(k+k_1)^*}$      & $k $                                 & $k $      & $k-1 $ \\
      \hline
      $\bm{\mathcal{BDM}}_k$   & $\mathcal P_{k}$        & $k+1$                   & $k$                                  & $k+1$     & $k$    \\

      \hline
      $\bm{\mathcal{BDM}}_{k}$ & $\mathcal P_{k+1}$      & $k+1$                   & $k$                                  & ${k+k_2}$ & $k+1$  \\

      \hline

      $\bm{\mathcal{RT}}_k$    & $\mathcal P_{k-1}$      & $\zzy{(k+k_1)^*}$       & $k$                                  & $k$       & $k-1$  \\
      \hline
      $\bm{\mathcal{RT}}_k$    & $\mathcal P_{k}$        & $k+1$                   & $k+1$                                & $k+1$     & $k$    \\

      \hline
      $\bm{\mathcal{RT}}_{k}$  & $\mathcal P_{k+1}$      & $k+1$                   & $k+1$                                & $k+2$     & $k+1$  \\

      \hline

      \Xhline{1pt}
    \end{tabular}}
  \caption{Optimal error estimates, {* means $H^3$ elliptic regularity is required, $k_1:=\min(k-2,1)$, $k_2:=\min(k,2)$}}
  \label{opt}
\end{table}

As shown in \Cref{opt}, {only two \zzy{cases} of optimal estimates are established with} \zzy{$H^3$} regularity. \zzy{Also, some of our optimal estimates are \zzyb{established} with a lower regularity \zzyb{for} the flux \zzyb{compared} to the scalar.} For the \zzya{supercloseness}, \zzy{$H^3$} regularity is only used for $\|\Pi_{V_h}u-u_h\|_0$ and one case of $\|\nabla(\Pi_{V_h}u-u_h)\|_0$.
\begin{table}[H]\Large
  \renewcommand{\arraystretch}{1.2}
  \centering
  \resizebox{1.0\textwidth}{!}{
    \begin{tabular}{c|c|c|c|c|c}
      \Xhline{1pt}
      {$\bm P_h$}                            & {$V_h$}                             & $\|\bm{\Pi}_{\bm P_h}\bm q-\bm q_h\|_0$ & {$\|\nabla\cdot(\bm{\Pi}_{\bm P_h}\bm q-\bm q_h)\|_0$}
                                             & {$\|\Pi_{V_h}u-u_h\|_0$}
                                             & {$\|\nabla(\Pi_{V_h}u-u_h)\|_0$}                                                                                                                                                               \\
      \Xhline{1pt}
      $\bm{\mathcal{BDM}}_k$                 & $\mathcal P_{k-1}$                  & $\zzy{(k+k_1)^*}$                       & $k$                                                    & $\zzy{(k+k_1)^*}$                    & $k$            \\
      \hline
      $\bm{\mathcal{BDM}}_k$                 & $\mathcal P_{k}$                    & $k+1$                                   & $k+1$                                                  & $\zzy{(k+k_2)^*}$                    & $k+1$          \\

      \hline
      $\bm{\mathcal{BDM}}_{k}$               & $\mathcal P_{k+1}$                  & $k+1$                                   & $k+2$                                                  & {$k+k_2$}                            & $k+1$          \\

      \hline

      $\bm{\mathcal{RT}}_k$                  & $\mathcal P_{k-1}$                  & $\zzy{(k+k_1)^*}$                       & $k$                                                    & $\zzy{(k+k_1)^*}$                    & $k$            \\
      \hline
      $\bm{\mathcal{RT}}_k$                  & $\mathcal P_{k}$                    & $k+1$                                   & $k+1$                                                  & $\zzy{(k+k_2)^*}$                    & $k+1$          \\

      \hline
      \multirow{2}{*}{$\bm{\mathcal{RT}}_k$} & \multirow{2}{*}{$\mathcal P_{k+1}$} & \multirow{2}{*}{${k+1}$}                & \multirow{2}{*}{$k +2$ }                               & \multirow{2}{*}{$\zzy{(k+2+k_3)^*}$} & {
      $\zzy{(k+2)^*}$ if $k>0$ }                                                                                                                                                                                                              \\
                                             &                                     &                                         &                                                        &                                      & $k+2$ if $k=0$ \\

      \hline
      \Xhline{1pt}
    \end{tabular}}
  \caption{\zzya{Supercloseness} error estimates, {* means $H^3$ elliptic regularity is required, $k_1:=\min(k-2,1)$, $k_2:=\min(k,2)$, $k_3:=\min(k,1)$}}
  \label{sup}
\end{table}
In \Cref{sup}, {$\Pi_{V_h}$ and $\bm{\Pi}_{\bm P_h}$, \zzy{as presented in \Cref{projection}, are critical for obtaining the \zzya{supercloseness} results.} For the cases of $\|\bm{\Pi}_{\bm P_h}\bm q-\bm q_h\|_0$, we only establish suboptimal and optimal approximation. \zzy{For the cases} of $\|\nabla\cdot(\bm{\Pi}_{\bm P_h}\bm q-\bm q_h)\|_0$, \zzya{supercloseness} can be obtained for $\bm{\mathcal{BDM}}_k/\mathcal P_{k}$, $\bm{\mathcal{BDM}}_k/\mathcal P_{k+1}$ and $\bm{\mathcal{RT}}_k/\mathcal P_{k+1}$. For the cases of $\|\Pi_{V_h}u-u_h\|_0$ and $\|\nabla(\Pi_{V_h}u-u_h)\|_0$, \zzya{supercloseness} are established except for one situation that $\Pi_{V_h}u-u_h$ with $\bm{\mathcal{BDM}}_k/\mathcal P_{k+1}$.}
Numerical results presented in \Cref{nume_test} confirm that our analysis is indeed sharp for most cases.

The paper is organized as follows. In the rest of this section, we introduce the notation used throughout the paper. In \Cref{section_2}, we present the scheme of LSFEM. \Cref{Primary_estimate} shows the primary error estimates of our scheme from the error estimates of some projections. In \Cref{err_dual}, we design several special dual arguments to obtain the $L^2$ error estimates of $\Pi_{V_h}u-u_h$, $\nabla\cdot(\bm\Pi_{\bm P_h}\bm q-\bm q_h)$, $\bm\Pi_{\bm P_h}\bm q-\bm q_h$ and $\nabla(\Pi_{V_h}u-u_h)$. \zzy{\Cref{nume_test} provides numerical experiments to confirm the theoretical analysis.}

\subsection{Notation}
For a bounded domain $\Lambda$ and $p>0$,  let $H^p(\Lambda)$ denote the Sobolev space of order $p$ on $\Lambda$, and $L^2(\Lambda)=H^0(\Lambda)$. \zzy{We denote} vector-valued functions and vector analogous of the Sobolev space by bold-face fonts, such as $\bm L^2(\Lambda) = [L^2(\Lambda)]^d,d=2,3$. On Sobolev space $H^p(\Lambda)$ and $[H^p(\Lambda)]^d$, we follow the corresponding \zzy{notation for} norm $\|\cdot\|_{p,\Lambda}$ and the \zzy{semi-norm $|\cdot|_{p,\Lambda}$}.
The inner products on domain $\Lambda$ and boundary $\partial \Lambda$ of $[L^2(\Lambda)]^r$ and $[L^2(\partial\Lambda)]^r$, $r=1,2,3$ are standard. Moreover, $\bm n_{\Lambda}$ denotes the outward unit normal vector to $\partial \Lambda$. For the case that $\Lambda=\Omega$, we omit the symbol $\Omega$, for example, $\|\cdot\|_{p}:=\|\cdot\|_{p,\Omega}$.

Next, we recall the spaces
  {\begin{gather*}
      \bm H({\rm div};\Omega)  :=\{\bm q\in \bm L^2(\Omega):\nabla\cdot\bm q\in L^2(\Omega)\},\\
      \bm H_N({\rm div};\Omega)  :=\{\bm q\in \bm H({\rm div};\Omega):\bm q\cdot\bm n =0\text{ on }\Gamma_N   \},\\
      H_D^1(\Omega)              :=\{
      u\in H^1(\Omega): u=0\text{ on }\Gamma_D
      \}.
    \end{gather*}}
Moreover, we denote by
\begin{align*}
  \bm H_N^{\gamma}({\rm div};\Omega)  :=\bm H_N({\rm div};\Omega)\cap \bm H^{\gamma}(\Omega), \qquad H_D^{\eta}(\Omega) :=H_D^1(\Omega) \cap H^{\eta}(\Omega),
\end{align*}
where $\gamma\ge0$, $\eta\ge 1$.
The space $H^{-1}_D(\Omega)$ denotes the dual space of $H^{1}_D(\Omega)$, with the negative norm
\begin{align*}
  \|u\|_{-1,D}=\sup\limits_{0\neq v\in H^1_D(\Omega)}\frac{(u,v)}{\|v\|_1} \quad \forall u\in H^{-1}_D(\Omega),
\end{align*}
\zzy{where $(\cdot,\cdot)$ denotes the duality pairing between $H^{-1}_D(\Omega)$ and $H^1_D(\Omega)$.} By the definition of the negative norm and the Poincaré inequality, it holds
\begin{align}\label{neg_norm}
  |(u,v)|\le C\|u\|_{-1,D}\| \nabla v\|_0 \quad \forall u\in H^{-1}_D(\Omega), v\in H_D^1(\Omega).
\end{align}
Throughout this paper, $C$ is a generic constant that may
change from line to line but independent of the mesh size.
\section{Least-Squares finite element methods}\label{section_2}
In this section, we present the scheme of LSFEM.
From \eqref{org}, we consider the problem with a more general right-hand side:
for any given $\bm g\in \bm L^2(\Omega)$ and $f\in L^2(\Omega)$, our goal is to find $(\bm q, u)$ such that
\begin{subequations}
  \begin{gather*}
    {\sigma^{-1}\bm q-\nabla u} =\bm g,\quad -\nabla \cdot\bm q-\omega^2\eta u = f \quad \text{in }\Omega, \\
    u  =0 \quad \text{on }\Gamma_D \quad\text{and}\quad
    \bm n\cdot\bm q =0 \quad \text{on }\Gamma_N,
  \end{gather*}
  in which $\omega^2$ is not the eigenvalue of \eqref{eigen}.
\end{subequations}

The bilinear form $a(\cdot;\cdot)$ on spaces $[\bm H_N({\rm div};\Omega)\times H_{D}^1(\Omega)]^2$ is defined as
\begin{align}\label{defa}
  a(\bm q,u;\bm p, v):=(\sigma(\sigma^{-1}\bm q-\nabla u),\sigma^{-1}\bm p-\nabla v)
  +(\nabla\cdot\bm q+\omega^2\eta u,\nabla\cdot\bm p+\omega^2\eta v).
\end{align}
Then the LSFEM reads: find $(\bm q,u)\in \bm H_N({\rm div};\Omega)\times H_{D}^1(\Omega)$ such that
\begin{align}\label{lsorg}
  a(\bm q,u;\bm p,v)=(\sigma\bm  g,\sigma^{-1}\bm p-\nabla v )-(f,\nabla\cdot\bm p+\omega^2\eta v)
\end{align}
for any $(\bm p,v)\in \bm H_N({\rm div};\Omega)\times H_{D}^1(\Omega)$.

The energy norm corresponding to the least-squares principle on space $\bm H_N({\rm div};\Omega)\times H_{D}^1(\Omega)$ is defined as
\begin{align*}
  \|(\bm q,u)\|^2:= \|\sigma^{-\frac 1 2}\bm q\|^2_{0}
  +\|\nabla\cdot\bm q\|^2_{0}
  +\|\sigma^{\frac 1 2}\nabla u\|^2_{0}
  +\|\omega^2\eta u\|_0^2
  .
\end{align*}
{The following result is standard for the LSFEM.}

\begin{lemma}[{\cite[Theorem 3.1]{MR1302685}},{\cite[Theorem 3.1]{Zhang2023}}] \label{lemma-a} The continuity property {and the coercivity} holds
  \begin{gather}
    |a(\bm q,u;\bm p,v)|  \le  	C\|(\bm q,u)\|\|(\bm p,v)\| \quad\forall (\bm q,u),(\bm p, v) \in \bm H_N({\rm div};\Omega)\times H_{D}^1(\Omega),\label{con1}\\
    a(\bm q,u;\bm q,u)  \ge C\|(\bm q,u)\|^2 \quad\forall (\bm q,u)\in \bm H_N({\rm div};\Omega)\times H_{D}^1(\Omega).\label{con2}
  \end{gather}
\end{lemma}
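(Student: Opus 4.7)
The continuity estimate \eqref{con1} is routine: expand $a(\bm q,u;\bm p,v)$ into its two $L^{2}$ pairings, apply Cauchy--Schwarz to each, and then split the combined factors via the triangle inequality, e.g.\ $\|\sigma^{1/2}(\sigma^{-1}\bm q-\nabla u)\|_{0}\leq \|\sigma^{-1/2}\bm q\|_{0}+\|\sigma^{1/2}\nabla u\|_{0}$, absorbing the $L^{\infty}$ bounds on $\sigma$ and $\eta$ into the constant. Summing the two bounded pairings then yields \eqref{con1} immediately.

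For the coercivity \eqref{con2}, the starting point is the identity
\[
  a(\bm q,u;\bm q,u)=\|\sigma^{1/2}\bm\xi\|_{0}^{2}+\|\varphi\|_{0}^{2},\qquad \bm\xi:=\sigma^{-1}\bm q-\nabla u,\quad \varphi:=\nabla\cdot\bm q+\omega^{2}\eta u,
\]
so the task reduces to bounding each of $\|\sigma^{-1/2}\bm q\|_{0}$, $\|\sigma^{1/2}\nabla u\|_{0}$, $\|\nabla\cdot\bm q\|_{0}$ and $\|\omega^{2}\eta u\|_{0}$ purely in terms of $\|\bm\xi\|_{0}$ and $\|\varphi\|_{0}$. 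My plan is to view $(\bm q,u)$ as the unique solution of the first-order system with data $(\bm\xi,-\varphi)$ and then exploit the well-posedness of the underlying second-order elliptic problem. Concretely, for any $v\in H^{1}_{D}(\Omega)$ I test the relation $\sigma^{-1}\bm q-\nabla u=\bm\xi$ against $\sigma\nabla v$ and integrate the $(\bm q,\nabla v)$ term by parts; the boundary integral vanishes since $\bm q\in\bm H_{N}({\rm div};\Omega)$ and $v\in H^{1}_{D}(\Omega)$, and substituting $\nabla\cdot\bm q=\varphi-\omega^{2}\eta u$ gives
\[
  (\sigma\nabla u,\nabla v)-\omega^{2}(\eta u,v)=-(\varphi,v)-(\sigma\bm\xi,\nabla v)\qquad\forall\, v\in H^{1}_{D}(\Omega),
\]
whose right-hand side defines an element of $H^{-1}_{D}(\Omega)$ of norm at most $C(\|\varphi\|_{0}+\|\bm\xi\|_{0})$.

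Because $\omega^{2}$ is excluded from the spectrum of \eqref{eigen}, the Fredholm alternative ensures that the map sending $u\in H^{1}_{D}(\Omega)$ to the functional $v\mapsto(\sigma\nabla u,\nabla v)-\omega^{2}(\eta u,v)$ is an isomorphism onto $H^{-1}_{D}(\Omega)$, which yields $\|u\|_{1}\leq C(\|\varphi\|_{0}+\|\bm\xi\|_{0})$. Then $\bm q=\sigma(\nabla u+\bm\xi)$ gives $\|\bm q\|_{0}\leq C(\|\varphi\|_{0}+\|\bm\xi\|_{0})$ and $\nabla\cdot\bm q=\varphi-\omega^{2}\eta u$ gives $\|\nabla\cdot\bm q\|_{0}\leq C(\|\varphi\|_{0}+\|\bm\xi\|_{0})$. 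Collecting these with the $L^{\infty}$ bounds on $\sigma$ and $\eta$ yields $\|(\bm q,u)\|^{2}\leq C(\|\bm\xi\|_{0}^{2}+\|\varphi\|_{0}^{2})=C\,a(\bm q,u;\bm q,u)$, which is \eqref{con2}.

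The single genuine obstacle is the Fredholm step: since $\omega^{2}$ is not assumed small, a direct G\aa{}rding/Lax--Milgram argument on the indefinite form $(\sigma\nabla u,\nabla v)-\omega^{2}(\eta u,v)$ is unavailable, and one has to combine compactness of the embedding $H^{1}(\Omega)\hookrightarrow L^{2}(\Omega)$ with the uniqueness supplied by the assumption that $\omega^{2}$ is not an eigenvalue of \eqref{eigen} to obtain both existence and the a priori bound. Everything else is algebra and one integration by parts, so the proof is short once this elliptic well-posedness is in hand.
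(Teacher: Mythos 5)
Your argument is correct and is essentially the standard proof from the cited reference \cite{MR1302685}: the paper itself gives no proof of this lemma, simply importing it from [Theorem 3.1, MR1302685], where coercivity is likewise obtained by reading $(\bm q,u)$ as the solution of the first-order system with data $(\bm\xi,-\varphi)$ and invoking the $H^1_D\to H^{-1}_D$ isomorphism guaranteed by the assumption that $\omega^2$ is not an eigenvalue of \eqref{eigen} (the same well-posedness the paper records as \eqref{preg3}). Nothing in your outline would fail; the Fredholm step you flag is indeed the only non-algebraic ingredient and is standard.
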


For the finite element approximation, let $\mathcal T_h$ represent a collection of shape-regular simplexes  $\{T\}$ that partition the domain $\Omega$, with mesh size $h=\max_{T\in\mathcal T_h}h_T$, where $h_T$ is the diameter of the circumcircle of $T$.

We choose the space $\bm P_h\times V_h\subset \bm H_N({\rm div};\Omega)\times H^1_D(\Omega)$, then the LSFEM can be expressed as follows: find $(\bm q_h,u_h)\in \bm P_h\times V_h$ such that
\begin{align}\label{lsfem}
  a(\bm q_h,u_h;\bm p_h,v_h)=(\sigma \bm g,\sigma^{-1}\bm p_h-\nabla v_h )-(f,\nabla\cdot\bm p_h +\omega^2\eta v_h)
\end{align}
for any $(\bm p_h,v_h)\in \bm P_h\times V_h$.

The following two results are trivial but useful in our analysis.
\begin{lemma}[Orthogonality]\label{lemma-or} Let $(\bm q,u)\in \bm H_N^{s}({\rm div};\Omega)\times H^1_D(\Omega)$ with $s>1/2$ and $(\bm q_h,u_h)\in \bm P_h\times V_h$ be the solution of \eqref{lsorg} and \eqref{lsfem}, respectively, then
  \begin{align}\label{orth}
    a(\bm q-\bm q_h,u-u_h;\bm p_h,v_h)=0 \quad\forall(\bm p_h,v_h)\in \bm P_h\times V_h.
  \end{align}
\end{lemma}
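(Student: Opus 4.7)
The plan is the standard Galerkin orthogonality argument. First I would observe that, by construction, $\bm P_h\times V_h$ is a subspace of $\bm H_N(\mathrm{div};\Omega)\times H^1_D(\Omega)$, so every discrete pair $(\bm p_h,v_h)$ is an admissible test function in the continuous variational problem \eqref{lsorg}. Under the hypothesis $(\bm q,u)\in \bm H^{s}_N(\mathrm{div};\Omega)\times H^1_D(\Omega)$ with $s>1/2$, the normal trace $\bm q\cdot\bm n|_{\Gamma_N}$ is well defined, the Neumann condition is interpreted in the trace sense, and both $\sigma^{-1}\bm q-\nabla u\in\bm L^2(\Omega)$ and $\nabla\cdot\bm q+\omega^2\eta u\in L^2(\Omega)$. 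Hence $a(\bm q,u;\bm p_h,v_h)$ is well defined and $(\bm q,u)$ genuinely satisfies the variational identity \eqref{lsorg}.

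Next I would test \eqref{lsorg} against the discrete pair $(\bm p_h,v_h)$ to obtain
\[
a(\bm q,u;\bm p_h,v_h)=(\sigma\bm g,\sigma^{-1}\bm p_h-\nabla v_h)-(f,\nabla\cdot\bm p_h+\omega^2\eta v_h),
\]
and then subtract the discrete equation \eqref{lsfem}. The two right-hand sides coincide exactly, since each depends only on the test function $(\bm p_h,v_h)$ and the data $\bm g,f$. This immediately yields
\[
a(\bm q,u;\bm p_h,v_h)-a(\bm q_h,u_h;\bm p_h,v_h)=0.
\]

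Finally, the bilinearity of $a(\cdot;\cdot)$ in its first pair of arguments, which is transparent from the definition \eqref{defa}, allows me to combine the two terms and conclude $a(\bm q-\bm q_h,u-u_h;\bm p_h,v_h)=0$ for every $(\bm p_h,v_h)\in \bm P_h\times V_h$, as claimed. There is no genuine obstacle: the argument reduces to the subspace inclusion $\bm P_h\times V_h\subset \bm H_N(\mathrm{div};\Omega)\times H^1_D(\Omega)$ together with the bilinearity of $a$; the regularity condition $s>1/2$ plays only the auxiliary role of guaranteeing a well-defined normal trace on $\Gamma_N$ so that the continuous equation makes sense.
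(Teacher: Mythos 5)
Your proof is correct and is exactly the standard Galerkin-orthogonality argument the paper has in mind: the paper states this lemma without proof, calling it trivial, and your reasoning (subspace inclusion $\bm P_h\times V_h\subset \bm H_N(\mathrm{div};\Omega)\times H^1_D(\Omega)$, subtracting \eqref{lsfem} from \eqref{lsorg} tested on discrete functions, then bilinearity of $a$) is precisely the intended justification. Your remark on the role of $s>1/2$ for the normal trace is a reasonable reading of why the hypothesis appears in the statement.
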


\begin{lemma}[Stability] Let $(\bm q_h,u_h)\in \bm P_h\times V_h$ be the solution of \eqref{lsfem}. Then the stability  holds
  \begin{align}\label{sta}
    \|(\bm q_h,u_h)\|\le C(\|\bm g\|_0 + \|f\|_0).
  \end{align}
\end{lemma}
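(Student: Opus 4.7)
The plan is to combine the coercivity of $a(\cdot;\cdot)$ from \Cref{lemma-a} with the defining equation \eqref{lsfem}, choosing the discrete solution itself as the test function. This is a textbook Lax--Milgram style stability argument tailored to the least-squares functional, so I expect no serious obstacle; the work is in bookkeeping the norms.

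First, I would take $(\bm p_h, v_h) = (\bm q_h, u_h)$ in \eqref{lsfem}, giving
\begin{align*}
a(\bm q_h, u_h; \bm q_h, u_h) = (\sigma \bm g, \sigma^{-1} \bm q_h - \nabla u_h) - (f, \nabla \cdot \bm q_h + \omega^2 \eta u_h).
\end{align*}
On the left, the coercivity estimate \eqref{con2} yields $a(\bm q_h,u_h;\bm q_h,u_h)\ge C\|(\bm q_h,u_h)\|^2$.

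Next, I would bound the right-hand side. Since $\sigma$ is a smooth positive function bounded above and below, applying Cauchy--Schwarz and the uniform boundedness of $\sigma^{\pm 1/2}$ gives
\begin{align*}
|(\sigma \bm g, \sigma^{-1}\bm q_h - \nabla u_h)| &\le C\|\bm g\|_0 \bigl(\|\sigma^{-1/2}\bm q_h\|_0 + \|\sigma^{1/2}\nabla u_h\|_0\bigr),\\
|(f, \nabla\cdot \bm q_h + \omega^2 \eta u_h)| &\le C\|f\|_0 \bigl(\|\nabla\cdot\bm q_h\|_0 + \|\omega^2\eta u_h\|_0\bigr).
\end{align*}
Each parenthesized quantity is controlled by $\|(\bm q_h,u_h)\|$ directly from the definition of the energy norm, so the right-hand side is bounded by $C(\|\bm g\|_0+\|f\|_0)\,\|(\bm q_h,u_h)\|$.

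Finally, chaining the two bounds gives $C\|(\bm q_h,u_h)\|^2 \le C(\|\bm g\|_0+\|f\|_0)\|(\bm q_h,u_h)\|$. Dividing by $\|(\bm q_h,u_h)\|$ (or noting the inequality is trivial when the norm is zero) produces the stated estimate \eqref{sta}. The only mild subtlety is confirming that the constants absorbed from $\sigma$, $\eta$, and $\omega^2$ are independent of $h$, which is immediate from the smoothness assumptions on the coefficients stated after \eqref{org}.
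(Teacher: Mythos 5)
Your proof is correct and is exactly the standard argument the paper has in mind: the lemma is stated without proof (the authors call it ``trivial''), and the intended route is precisely testing \eqref{lsfem} with $(\bm q_h,u_h)$, applying the coercivity \eqref{con2} on the left, Cauchy--Schwarz plus the boundedness of $\sigma^{\pm 1/2}$ on the right, and cancelling one factor of $\|(\bm q_h,u_h)\|$.
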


\section{Primary error estimates}\label{Primary_estimate}
In this section, we first introduce the $L^2$ projection and the elliptic projection, then some fundamental estimates can be obtained from these projections.
\subsection{The spaces}\label{space}
Let $\mathcal P_k(\Lambda)(k\ge 1)$ denote the set of all polynomials with a maximum \zzy{degree  $k$} on $\Lambda$.
For $m\ge 1$, the space of \zzy{$H_D^1$-conforming piecewise $\mathcal{P}_m$ functions} is of form
\begin{align}
  P_m=\{v_h\in H_D^1(\Omega): v_h|_T\in \mathcal P_{m}(T)\quad \forall T\in\mathcal T_h\},
\end{align}
which is denoted by $V_h$.
\zzy{The spaces of $\bm{\mathcal{BDM}}_k$ element is defined as}
\zzy{\begin{align*}
  \bm{\mathcal{BDM}}_k=\{
  \bm p_h\in \bm H_N({\rm div};\Omega): \bm p_h|_T\in [\mathcal P_{k}(T)]^d \quad \forall T\in\mathcal T_h\},
\end{align*}
and for $k\ge 0$, the spaces of $\bm{\mathcal{RT}}_k$ element is defined as
\begin{align*}
  \bm{\mathcal{RT}}_k=\{
  \bm p_h\in \bm H_N({\rm div};\Omega): \bm p_h|_T\in \bm{\mathcal R}_{k}(T)  \quad\forall T\in\mathcal T_h\},
\end{align*}
where $\bm{\mathcal R}_k(T)=[\mathcal P_k(T)]^d+\bm x\mathcal P_k(T)$.} For simplicity, we denote $\bm{\mathcal{BDM}}_k$ or $\bm{\mathcal{RT}}_k$ by $\bm P_h$.

\subsection{Projection}\label{projection}

Let $\bm{\Pi}_{\bm P_h}$ denotes the $H(\rm div)$ projection onto \zzy{$\bm P_h$.}
\subsubsection{$L^2$ projection}
The following definitions and approximation results are quite standard.
\begin{lemma}[{\cite[Corollary 2.1]{Boffi2008MixedFE}}] {For $v\in L^2(\Omega)$, let $\Pi_k^ov \in \mathcal P_k(T)$ be the $L^2$ projection such that}
  \begin{align*}
    (\Pi_k^ov,w_h)_{T}=(v,w_h)_{T} \quad\forall w_h\in \mathcal P_k(T),
  \end{align*}
  with the property
  \begin{align}\label{err1}
     & \left\|v-\Pi_k^ov\right\|_{0, T} \le C h_T^s|v|_{s, T}, \quad 0 \leq s \leq k+1 \quad\forall v \in H^s(T).
  \end{align}
\end{lemma}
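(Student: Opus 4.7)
The plan is to establish the estimate by the standard scaling plus Bramble--Hilbert argument. First I would pin down the case $s=0$, which is immediate from the definition of the $L^2$ projection: $\Pi_k^o$ is a contraction in $L^2(T)$ since $\|\Pi_k^ov\|_{0,T}^2 = (v,\Pi_k^ov)_T \le \|v\|_{0,T}\|\Pi_k^ov\|_{0,T}$, so $\|v-\Pi_k^ov\|_{0,T}\le 2\|v\|_{0,T}$. This handles the endpoint $s=0$.

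Next I would prove the top-order estimate $s=k+1$ on a fixed reference simplex $\widehat T$. Since $\Pi_k^o$ reproduces polynomials of degree at most $k$, for any $\widehat v\in H^{k+1}(\widehat T)$ and any $\widehat p\in \mathcal P_k(\widehat T)$ we have
\begin{equation*}
\|\widehat v-\widehat\Pi_k^o\widehat v\|_{0,\widehat T}
=\|(I-\widehat\Pi_k^o)(\widehat v-\widehat p)\|_{0,\widehat T}
\le \|\widehat v-\widehat p\|_{0,\widehat T}.
\end{equation*}
Taking the infimum over $\widehat p\in\mathcal P_k(\widehat T)$ and applying the Deny--Lions/Bramble--Hilbert lemma yields $\|\widehat v-\widehat\Pi_k^o\widehat v\|_{0,\widehat T}\le C|\widehat v|_{k+1,\widehat T}$. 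Then a standard affine pull-back from $\widehat T$ to $T$, using the shape-regularity of $\mathcal T_h$ and the transformation formulas for the $L^2$ norm (factor $h_T^{d/2}$) and for the seminorm $|\cdot|_{k+1}$ (factor $h_T^{k+1-d/2}$), produces the desired bound with exponent $s=k+1$.

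Finally I would cover the intermediate exponents $0<s<k+1$. The cleanest route is interpolation: since $\Pi_k^o$ is a bounded operator simultaneously on $L^2(T)$ and on $H^{k+1}(T)$ (by the two endpoint estimates), the K-method of real interpolation gives the intermediate bound on $H^s(T)=[L^2(T),H^{k+1}(T)]_{s/(k+1)}$ with the constant uniform in $T$ after rescaling. Alternatively one can repeat the reference-element/Bramble--Hilbert step directly on $H^s$, using that $\mathcal P_{\lfloor s\rfloor}\subset \mathcal P_k$.

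The only mildly delicate point is the intermediate range $s\notin\mathbb N$, where either interpolation theory or a fractional Bramble--Hilbert lemma on $\widehat T$ must be invoked; the integer cases and the scaling step are otherwise entirely routine. Since the paper cites this as a standard corollary from Boffi--Brezzi--Fortin, I would simply reference that source for the fractional cases rather than reprove them.
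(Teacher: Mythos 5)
The paper gives no proof of this lemma---it is quoted verbatim as a standard result from Boffi--Brezzi--Fortin---and your scaling-plus-Bramble--Hilbert argument (endpoint $s=0$ by the contraction property, endpoint $s=k+1$ by polynomial reproduction, Deny--Lions on the reference simplex and affine pull-back, intermediate $s$ by interpolation or a fractional Bramble--Hilbert lemma) is precisely the standard proof that the citation defers to, so it is correct and consistent with the paper. The only cosmetic slip is in the direct Deny--Lions route for intermediate $s$: the subtracted polynomial space should be $\mathcal P_{\lceil s\rceil-1}$ (which lies in $\mathcal P_k$ for all $s\le k+1$), not $\mathcal P_{\lfloor s\rfloor}$, since the latter would require $\mathcal P_{k+1}\subset\mathcal P_k$ at the integer endpoint $s=k+1$.
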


The following lemma can be found in {\cite[Lemma 3.5]{Boffi2008MixedFE}} and {\cite[Equation 2.13]{Brezzi1987}}.
\begin{lemma} \label{lemma-commu}For any $\bm v\in [H^{\frac 1 2+s}(\Omega)]^d$ with $s>0$, it holds
  \begin{align}\label{lem3.4}
    \Pi_{\ell}^o(\nabla\cdot\bm v)=\nabla\cdot( \bm{\Pi}_{\bm P_h}\bm v),\qquad \ell=\left\{
    \begin{aligned}
       & k - 1, & \bm P_h = \bm{\mathcal{BDM}}_{k}, \\
       & k,     & \bm P_h = \bm{\mathcal{RT}}_{k}.
    \end{aligned}
    \right.
  \end{align}
\end{lemma}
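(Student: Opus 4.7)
The claim is the classical commuting diagram property relating the canonical $\bm H(\mathrm{div})$ interpolant to the piecewise $L^2$ projection on the divergence. My plan is to prove it elementwise using the definitions of the canonical degrees of freedom of the BDM and RT families, plus integration by parts applied twice.

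First I would fix $T \in \mathcal T_h$ and a test polynomial $w_h \in \mathcal P_{\ell}(T)$, where $\ell = k-1$ for $\bm{\mathcal{BDM}}_k$ and $\ell = k$ for $\bm{\mathcal{RT}}_k$. Since $\nabla\cdot(\bm\Pi_{\bm P_h}\bm v)|_T \in \mathcal P_\ell(T)$ and $\Pi_{\ell}^o(\nabla\cdot\bm v)|_T \in \mathcal P_\ell(T)$, it suffices to verify
\begin{equation*}
\int_T \nabla\cdot(\bm\Pi_{\bm P_h}\bm v)\, w_h\,dx = \int_T \nabla\cdot\bm v\, w_h\,dx
\end{equation*}
for every such $w_h$. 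Integration by parts on the left gives
\begin{equation*}
\int_T \nabla\cdot(\bm\Pi_{\bm P_h}\bm v)\, w_h\,dx = \int_{\partial T} (\bm\Pi_{\bm P_h}\bm v)\cdot\bm n\, w_h\,ds - \int_T \bm\Pi_{\bm P_h}\bm v\cdot\nabla w_h\,dx.
\end{equation*}

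Next I would invoke the canonical degrees of freedom of $\bm\Pi_{\bm P_h}$. For both BDM$_k$ and RT$_k$, the facewise DOFs match normal-trace moments against $\mathcal P_k$ on each face $F \subset \partial T$, which in particular covers $w_h|_F \in \mathcal P_\ell(F) \subset \mathcal P_k(F)$. Thus the boundary term equals $\int_{\partial T}\bm v\cdot\bm n\, w_h\,ds$, provided $\bm v\cdot\bm n$ has an $L^2$ trace on faces; this is exactly where the assumption $\bm v \in [H^{1/2+s}(\Omega)]^d$ is used (the usual $\bm H(\mathrm{div})$ regularity is not enough to apply the canonical interpolant). For the volumetric term, I would note that $\nabla w_h \in [\mathcal P_{\ell-1}(T)]^d$: for BDM$_k$ this is $[\mathcal P_{k-2}]^d$, matched by the interior moments against $[\mathcal P_{k-2}]^d$; for RT$_k$ this is $[\mathcal P_{k-1}]^d$, matched by the interior moments against $[\mathcal P_{k-1}]^d$. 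Consequently the interior DOFs yield $\int_T \bm\Pi_{\bm P_h}\bm v\cdot\nabla w_h\,dx = \int_T \bm v\cdot\nabla w_h\,dx$.

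Reassembling, the right-hand side becomes $\int_{\partial T}\bm v\cdot\bm n\,w_h\,ds - \int_T \bm v\cdot\nabla w_h\,dx$, and a second integration by parts (now on $\bm v$) returns $\int_T \nabla\cdot\bm v\, w_h\,dx$. By definition of $\Pi_\ell^o$, this equals $\int_T \Pi_\ell^o(\nabla\cdot\bm v)\, w_h\,dx$. Since $w_h \in \mathcal P_\ell(T)$ is arbitrary and both sides lie in $\mathcal P_\ell(T)$, the identity follows.

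The main obstacle is the low-regularity calibration: the canonical interpolants are unbounded on $\bm H(\mathrm{div})$ and require enough regularity to define facewise normal-trace moments. The hypothesis $\bm v \in [H^{1/2+s}]^d$ with $s>0$ is precisely what makes the boundary integrals above meaningful and the DOF identification legitimate; the rest of the proof is bookkeeping about which polynomial degrees appear in the BDM and RT moment sets, and the commuting property then drops out cleanly.
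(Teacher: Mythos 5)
Your proof is correct and is exactly the standard commuting-diagram argument; the paper offers no proof of this lemma at all, simply citing \cite[Lemma 3.5]{Boffi2008MixedFE} and \cite[Equation 2.13]{Brezzi1987}, whose proofs proceed the same way (two integrations by parts plus matching the face and interior degrees of freedom, with the $H^{1/2+s}$ hypothesis used precisely to make the normal-trace moments of the canonical interpolant well defined). One cosmetic remark: the canonical interior moments of $\bm{\mathcal{BDM}}_k$ are taken against a N\'ed\'elec-type space strictly larger than $[\mathcal P_{k-2}(T)]^d$, but since that space contains $[\mathcal P_{k-2}(T)]^d$ (equivalently, contains $\nabla\mathcal P_{k-1}(T)$), the moment identity you invoke for the volumetric term still holds and no gap results.
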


\subsubsection{Elliptic projection}\label{ell_pro}
The elliptic projection $\Pi_{V_h}: H_D^1(\Omega)\to V_h$ is defined as: for any given $w\in H_D^1(\Omega)$, find $\Pi_{{V_h}}w\in V_h$ such that
\begin{align}\label{ell_p_1}
  (\sigma\nabla \Pi_{V_h}w, \nabla v_h)=(\sigma\nabla w,\nabla v_h) \quad \forall v_h\in V_h.
\end{align}

To prove the $L^2$ norm error estimates in dual arguments, we recall some regularity
results for the \zzy{elliptic system}: given any $\theta
  \in L^2(\Omega)$, find $z \in H^1(\Omega)$ such that
\begin{align}\label{elliptic}
  \zzy{-}\nabla\cdot(\sigma\nabla z)-\omega^2\eta z  =\theta\quad  \text{in }\Omega, \quad z                                           =0\quad       \text{on }\Gamma_D \quad \text{and}\quad
  \bm n \cdot\nabla z                         =0\quad        \text{on }\Gamma_N.
\end{align}
It is well-known that the above problem admits a
unique solution $z \in H_D^{1}(\Omega)$ under the condition that $\omega^2$ is not the eigenvalue of the problem \eqref{eigen}, and there holds \cite[Theorem 18.13]{Dauge1988elliptic},
\begin{align}\label{preg1}
  \|z\|_{1+\varepsilon} & \le C\|\theta\|_{-1+\varepsilon}
\end{align}
for some $\varepsilon\in (\frac 1 2,1]$ depends on the geometry of $\Omega$.
The right-hand side of \eqref{elliptic} can also be understood
in $H^{-1}$, i.e.,
\begin{align}\label{preg3}
  \|z\|_{1}\le C\|\theta\|_{-1,D}.
\end{align}
We also need the regularity result for the derivatives of order
higher than two. By \cite[Chapter 1]{Costabel2012},
for $\theta\in H_D^1(\Omega)$, there holds
\zzy{\begin{align}\label{preg2}
    \|z\|_{1+\beta}\le C\|\theta\|_{-1 + \beta},\quad \text{for some } \beta\in \left(\frac 1 2,2\right].
  \end{align}}
\zzy{\begin{remark}
    It is well known that $\varepsilon=1$ holds in \eqref{preg1} for the case $\Gamma_D=\partial\Omega$ and $\Omega$ is convex \cite{Dauge1988elliptic}. Moreover,
    $\beta$ can reach $2$ in \eqref{preg2} when $\Gamma_D=\partial\Omega$, $d=2$ and the largest angle of $\partial
      \Omega$ is strictly less than 90 degrees \cite[Equation (1.4)]{Costabel2012}.
  \end{remark}}
\zzy{Next, we provide regularity of the solution to the continuous problem. First, we can obtain from \eqref{preg1} that
  \begin{align}\label{reg_con_1}
    \|u\|_{1+\varepsilon} + \|\bm q\|_{\varepsilon}\leq C\| f\|_{-1+\varepsilon}\quad \text{for some }\varepsilon\in \left(\frac{1}{2},1\right].
  \end{align}
  Second, by \eqref{preg2}
  \begin{align}\label{reg_con_2}
    \|u\|_{1+\beta} + \|\bm q\|_{\beta}\leq C\|f\|_{-1 + \beta}\quad \text{for some }\beta\in \left(\frac{1}{2},2\right].
  \end{align}
}
\begin{remark}\label{rem_3.5} \zzy{It is noted that $H^3$ regularity is needed only} in the case that
  $k=m+1\ge 3$ to derive the optimal error estimates for $\|\bm{\Pi}_{\bm P_h}\bm q-\bm q_h\|_0$, and to derive \zzya{supercloseness} results for some other cases. In the case $k=m$ and $k+1=m$, the optimal error estimates for  $\|\bm{\Pi}_{\bm P_h}\bm q-\bm q_h\|_0$ are obtained \zzy{without requiring $H^3$ regularity.}
\end{remark}

Based on the regularity of \eqref{elliptic}, the proof of the following approximation properties are quite standard.
\begin{lemma} \label{approximation-elliptic}The following error estimates hold when $w\in H_D^1(\Omega)$:
  \begin{align*}
    \|\sigma^{\frac 1 2}\nabla(\Pi_{V_h}w-w )\|_0 & \le \inf_{v_h\in V_h}\|\sigma^{\frac 1 2}\nabla(v_h-w )\|_{0},       \\
    \|\Pi_{V_h}w-w\|_0                            & \le Ch^{\varepsilon}\|\sigma^{\frac 1 2}\nabla(\Pi_{V_h}w-w )\|_0,   \\
    \|\Pi_{V_h}w-w\|_{-1,D}                       & \le Ch^{\min(\beta,m)}\|\sigma^{\frac 1 2}\nabla(\Pi_{V_h}w-w )\|_0.
  \end{align*}
\end{lemma}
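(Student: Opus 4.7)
The plan is to prove the three estimates in order, starting from a Céa-type energy bound and then invoking two Aubin--Nitsche duality arguments tailored to $L^2$ and $H^{-1}_D$, respectively.

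For the first inequality, I would exploit Galerkin orthogonality. By subtracting \eqref{ell_p_1} with $v_h$ replaced by any element of $V_h$, one obtains $(\sigma\nabla(\Pi_{V_h}w - w),\nabla v_h) = 0$ for every $v_h\in V_h$. Writing
\[
\|\sigma^{1/2}\nabla(\Pi_{V_h}w - w)\|_0^2 = (\sigma\nabla(\Pi_{V_h}w - w),\nabla(v_h - w))
\]
for arbitrary $v_h\in V_h$, applying Cauchy--Schwarz, and then taking the infimum over $V_h$ yields the stated bound.

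For the $L^2$ estimate, I would use a duality argument. Set $\delta = \Pi_{V_h}w - w$. For $\theta \in L^2(\Omega)$, consider the dual problem of finding $z\in H^1_D(\Omega)$ with $-\nabla\cdot(\sigma\nabla z) = \theta$ and $\sigma\nabla z\cdot\bm n = 0$ on $\Gamma_N$. Regularity analogous to \eqref{preg1} gives $\|z\|_{1+\varepsilon} \le C\|\theta\|_0$. Integration by parts (using the trace and Neumann conditions on $\delta$ and $z$ to discard boundary terms), followed by Galerkin orthogonality against any $z_h\in V_h$, yields
\[
(\delta,\theta) = (\sigma\nabla\delta,\nabla(z - z_h)).
\]
Choosing $z_h$ as the standard Lagrange interpolant of $z$, we have $\|\nabla(z - z_h)\|_0 \le Ch^{\varepsilon}\|z\|_{1+\varepsilon}$ since $m\ge 1$. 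Cauchy--Schwarz and the supremum characterization $\|\delta\|_0 = \sup_\theta (\delta,\theta)/\|\theta\|_0$ then produce the claim.

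The third estimate is handled analogously, but with the dual data living in $H^1_D$. For $v\in H^1_D(\Omega)$, let $\phi\in H^1_D(\Omega)$ solve $-\nabla\cdot(\sigma\nabla\phi) = v$ with $\sigma\nabla\phi\cdot\bm n = 0$ on $\Gamma_N$. Since $v\in H^1_D\subset H^{-1+\beta}$ for any $\beta\in(1/2,2]$, the regularity bound \eqref{preg2} gives $\|\phi\|_{1+\beta} \le C\|v\|_{-1+\beta} \le C\|v\|_1$. The same integration by parts and Galerkin orthogonality produce
\[
(\delta,v) = (\sigma\nabla\delta,\nabla(\phi - \phi_h)) \quad \forall \phi_h\in V_h.
\]
Taking $\phi_h$ as the Lagrange interpolant and using the fact that $V_h$ contains $\mathcal{P}_m$, the interpolation estimate gives $\|\nabla(\phi - \phi_h)\|_0 \le Ch^{\min(\beta,m)}\|\phi\|_{1+\beta}$. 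Cauchy--Schwarz and division by $\|v\|_1$, followed by the supremum definition of $\|\cdot\|_{-1,D}$, conclude the proof. The main subtlety is the third inequality: one must carefully track how the available regularity shift $\beta$ interacts with the finite element polynomial degree $m$ through the $\min(\beta,m)$ exponent, and must verify that the homogeneous Neumann data on $\Gamma_N$ chosen for the dual problem is precisely what makes the boundary terms vanish when integrating by parts against $\delta\in H^1_D$.
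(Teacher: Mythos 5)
Your proof is correct and is precisely the standard C\'ea-plus-Aubin--Nitsche argument that the paper has in mind when it omits the proof as ``quite standard'': Galerkin orthogonality of the elliptic projection for the energy estimate, an $L^2$-data dual problem with the shift \eqref{preg1} for the second bound, and an $H^1_D$-data dual problem with the shift \eqref{preg2} for the negative-norm bound. Your choice to pose the dual problems for the pure operator $-\nabla\cdot(\sigma\nabla\,\cdot)$, rather than for the full operator of \eqref{elliptic}, is the right one, since the projection \eqref{ell_p_1} only sees the principal part and this avoids a leftover $\omega^2(\eta z,\delta)$ term.
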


\subsection{Error estimates}
We first present the following result, which plays a key role in the error estimations.
\begin{lemma}
  Let $(\bm q,u)\in \bm H_N^{s}({\rm div};\Omega)\times H^1_D(\Omega)$ with $s>1/2$, then for any $(\bm p_h,v_h)\in \bm P_h\times V_h$, it holds
  \begin{align}\label{pi-error}
    \begin{split}
       & a(\bm{\Pi}_{\bm P_h}\bm q-\bm q,\Pi_{V_h}u-u;\bm p_h, v_h)                                                            \\
       & \quad =(\sigma^{-1}(\bm{\Pi}_{\bm P_h}\bm q-\bm q ),\bm p_h)
      -(\bm{\Pi}_{\bm P_h}\bm q-\bm q,\nabla v_h)+(\Pi_{V_h}u-u ,\nabla\cdot\bm p_h)                                           \\
       & \quad\quad-\omega^2(\bm{\Pi}_{\bm P_h}\bm q-\bm q ,\nabla(\eta v_h) )+\omega^2(\eta(\Pi_{V_h}u-u),\nabla\cdot\bm p_h) \\
       & \quad\quad+\omega^4(\eta^2(\Pi_{V_h}u-u),v_h).
    \end{split}
  \end{align}
\end{lemma}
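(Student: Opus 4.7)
The identity is purely algebraic once the correct integration by parts and projection identities are used. I would let $\bm e_q := \bm{\Pi}_{\bm P_h}\bm q - \bm q$ and $e_u := \Pi_{V_h}u - u$, expand the bilinear form using its definition \eqref{defa}, and then simplify term by term. Specifically,
\begin{align*}
a(\bm e_q,e_u;\bm p_h,v_h)
&= (\sigma^{-1}\bm e_q,\bm p_h) - (\bm e_q,\nabla v_h) - (\nabla e_u,\bm p_h) + (\sigma\nabla e_u,\nabla v_h)\\
&\quad + (\nabla\!\cdot\!\bm e_q,\nabla\!\cdot\!\bm p_h) + \omega^2(\nabla\!\cdot\!\bm e_q,\eta v_h) + \omega^2(\eta e_u,\nabla\!\cdot\!\bm p_h) + \omega^4(\eta^2 e_u,v_h),
\end{align*}
using that $(\sigma^{-1}\bm e_q,\sigma\nabla v_h)=(\bm e_q,\nabla v_h)$.

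\textbf{Killing and rewriting the four remaining terms.} I plan to handle the discrepant terms as follows. First, $(\sigma\nabla e_u,\nabla v_h)=0$ by the defining property \eqref{ell_p_1} of the elliptic projection $\Pi_{V_h}$. Second, integration by parts on $-(\nabla e_u,\bm p_h)$ gives
\begin{equation*}
-(\nabla e_u,\bm p_h) = (e_u,\nabla\!\cdot\!\bm p_h) - \langle e_u,\bm p_h\!\cdot\!\bm n\rangle_{\partial\Omega},
\end{equation*}
and the boundary term vanishes because $e_u=0$ on $\Gamma_D$ (both $u$ and $\Pi_{V_h}u$ lie in $H_D^1(\Omega)$) and $\bm p_h\!\cdot\!\bm n=0$ on $\Gamma_N$ (since $\bm p_h\in\bm P_h\subset\bm H_N(\mathrm{div};\Omega)$). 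Third, I use Lemma~\ref{lemma-commu}: for either BDM or RT, $\nabla\!\cdot\!\bm{\Pi}_{\bm P_h}\bm q = \Pi_\ell^o(\nabla\!\cdot\!\bm q)$, while $\nabla\!\cdot\!\bm p_h\in\mathcal P_\ell(T)$ element-wise with the same $\ell$. Hence $(\nabla\!\cdot\!\bm e_q,\nabla\!\cdot\!\bm p_h)=0$ by the orthogonality of the $L^2$ projection. Finally, integration by parts on $\omega^2(\nabla\!\cdot\!\bm e_q,\eta v_h)$ yields
\begin{equation*}
\omega^2(\nabla\!\cdot\!\bm e_q,\eta v_h) = -\omega^2(\bm e_q,\nabla(\eta v_h)) + \omega^2\langle \bm e_q\!\cdot\!\bm n,\eta v_h\rangle_{\partial\Omega},
\end{equation*}
and the boundary term again vanishes: on $\Gamma_N$, $\bm q\!\cdot\!\bm n=0$ and $\bm{\Pi}_{\bm P_h}\bm q\!\cdot\!\bm n=0$, while on $\Gamma_D$, $v_h=0$.

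\textbf{Main obstacle.} There is no genuine obstacle; the identity is a book-keeping exercise. The only subtle points are verifying the boundary traces needed in the two integration-by-parts steps, in particular making sure that the trace $e_u|_{\Gamma_D}=0$ is legitimate (which it is because $u,\Pi_{V_h}u\in H_D^1(\Omega)$) and that $\bm{\Pi}_{\bm P_h}\bm q\!\cdot\!\bm n$ vanishes on $\Gamma_N$, which is built into the choice $\bm P_h\subset\bm H_N(\mathrm{div};\Omega)$. Once these are noted, collecting the six surviving terms reproduces \eqref{pi-error} exactly.
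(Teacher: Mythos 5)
Your proposal is correct and follows essentially the same route as the paper's proof: expand $a(\cdot;\cdot)$ from \eqref{defa}, kill $(\sigma\nabla(\Pi_{V_h}u-u),\nabla v_h)$ via \eqref{ell_p_1}, kill $(\nabla\cdot(\bm\Pi_{\bm P_h}\bm q-\bm q),\nabla\cdot\bm p_h)$ via \Cref{lemma-commu} together with the orthogonality of $\Pi_\ell^o$, and convert the two remaining terms by integration by parts. The only difference is that you spell out the boundary-trace verifications that the paper leaves implicit, which is a welcome addition but not a different argument.
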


\begin{proof}
  It follows from the definition of $a(\cdot;\cdot)$ in \eqref{defa} that
  \begin{align}\label{3.3_a}
    \begin{split}
       & a(\bm{\Pi}_{\bm P_h}\bm q-\bm q,\Pi_{V_h}u-u;\bm p_h, v_h)       \\
       & \quad =(\sigma^{-1}(\bm{\Pi}_{\bm P_h}\bm q-\bm q ),\bm p_h)
      -(\bm{\Pi}_{\bm P_h}\bm q-\bm q,\nabla v_h)
      -(\nabla(\Pi_{V_h}u-u ),\bm p_h)                                    \\
       & \quad\quad
      +(\sigma\nabla(\Pi_{V_h}u-u ),\nabla v_h )
      +(\nabla\cdot( \bm{\Pi}_{\bm P_h}\bm q-\bm q ) ,\nabla\cdot\bm p_h) \\
       & \quad\quad
      +\omega^2(\nabla\cdot( \bm{\Pi}_{\bm P_h}\bm q-\bm q ),\eta v_h )
      +\omega^2(\eta(\Pi_{V_h}u-u),\nabla\cdot\bm p_h)                    \\
       & \quad\quad+\omega^4(\eta^2(\Pi_{V_h}u-u),v_h).
    \end{split}
  \end{align}
  The equation \eqref{pi-error} follows from \eqref{3.3_a}, the orthogonality \eqref{ell_p_1}, \Cref{lemma-commu}, the orthogonality of $\Pi_{\ell}^o$ and integration by parts.
\end{proof}

Now, we are ready to prove the following estimate, which will be used in our primary error analysis.
\begin{lemma}\label{lem_3.7}  Let $(\bm q,u)\in \bm H_N^{s}({\rm div};\Omega)\times H^1_D(\Omega)$ with $s>1/2$, then
  \begin{align}\label{E-error}
    \begin{split}
       & |a(\bm{\Pi}_{\bm P_h}\bm q-\bm q,\Pi_{V_h}u-u;\bm p_h, v_h)| \\
       & \quad\le C(
      \|\sigma^{-\frac 1 2}(\bm\Pi_{\bm P_h}\bm q-\bm q)\|_0
      +\|\Pi_{V_h}u-u\|_0
      )\|(\bm p_h, v_h)\|
    \end{split}
  \end{align}
  holds for any $(\bm p_h,v_h)\in \bm P_h\times V_h$.
\end{lemma}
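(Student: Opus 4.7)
The plan is to start from the identity \eqref{pi-error} established in the preceding lemma, which expresses $a(\bm{\Pi}_{\bm P_h}\bm q-\bm q,\Pi_{V_h}u-u;\bm p_h, v_h)$ as a sum of six inner-product terms that no longer involve $\nabla(\Pi_{V_h}u-u)$ or $\nabla\cdot(\bm{\Pi}_{\bm P_h}\bm q-\bm q)$. Since the right-hand side of the claim only controls $\|\sigma^{-1/2}(\bm{\Pi}_{\bm P_h}\bm q-\bm q)\|_0$ and $\|\Pi_{V_h}u-u\|_0$, the essential feature of \eqref{pi-error} that I intend to exploit is that after the orthogonality of the elliptic projection, the $L^2$-projection commutativity from \Cref{lemma-commu}, and integration by parts, all derivatives have been moved onto the test functions $(\bm p_h, v_h)$.

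With that identity in hand, I would bound each of the six terms by Cauchy--Schwarz against one of the four components of the energy norm $\|(\bm p_h,v_h)\|^2=\|\sigma^{-1/2}\bm p_h\|_0^2+\|\nabla\cdot\bm p_h\|_0^2+\|\sigma^{1/2}\nabla v_h\|_0^2+\|\omega^2\eta v_h\|_0^2$. Specifically: $(\sigma^{-1}(\bm{\Pi}_{\bm P_h}\bm q-\bm q),\bm p_h)$ pairs with $\|\sigma^{-1/2}\bm p_h\|_0$; $-(\bm{\Pi}_{\bm P_h}\bm q-\bm q,\nabla v_h)$ pairs with $\|\sigma^{1/2}\nabla v_h\|_0$ after inserting $\sigma^{\pm 1/2}$; $(\Pi_{V_h}u-u,\nabla\cdot\bm p_h)$ pairs with $\|\nabla\cdot\bm p_h\|_0$; $\omega^2(\eta(\Pi_{V_h}u-u),\nabla\cdot\bm p_h)$ likewise pairs with $\|\nabla\cdot\bm p_h\|_0$ using boundedness of $\omega^2\eta$; and $\omega^4(\eta^2(\Pi_{V_h}u-u),v_h)$ factors as a bounded multiple of $\|\Pi_{V_h}u-u\|_0\cdot\|\omega^2\eta v_h\|_0$.

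The one term that needs a little care is $-\omega^2(\bm{\Pi}_{\bm P_h}\bm q-\bm q,\nabla(\eta v_h))$. I would expand $\nabla(\eta v_h)=\eta\nabla v_h+v_h\nabla\eta$. The first piece bounds against $\|\sigma^{1/2}\nabla v_h\|_0$ using boundedness of $\omega^2\eta$ and $\sigma^{-1/2}$; the second piece is handled by Cauchy--Schwarz, boundedness of $\nabla\eta$, and the observation that since $\eta$ is smooth and nowhere zero, $\omega^2\|v_h\|_0\le C\|\omega^2\eta v_h\|_0$, so this contribution is absorbed into the last component of $\|(\bm p_h,v_h)\|$. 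This expansion, which trades the troublesome $\nabla(\eta v_h)$ for quantities appearing in the energy norm, is the only non-automatic step and is the main (very minor) obstacle.

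Finally, summing the six estimates, using the elementary inequality $ab+cd\le (a^2+c^2)^{1/2}(b^2+d^2)^{1/2}$ repeatedly, and collecting the bounded coefficients into the generic constant $C$ yields \eqref{E-error}. No additional regularity beyond $\bm q\in\bm H^s(\mathrm{div};\Omega)$ with $s>1/2$ is needed, since that assumption is used only to guarantee that $\bm{\Pi}_{\bm P_h}\bm q$ is well-defined and that the commutativity relation \eqref{lem3.4} applies, both of which were invoked already in deriving \eqref{pi-error}.
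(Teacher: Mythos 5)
Your proposal is correct and follows exactly the paper's route: the paper's proof of this lemma is a one-line appeal to the identity \eqref{pi-error} together with the Cauchy--Schwarz inequality, and you have simply filled in the term-by-term pairings (including the only slightly delicate term $-\omega^2(\bm{\Pi}_{\bm P_h}\bm q-\bm q,\nabla(\eta v_h))$, which you handle correctly via the product rule and the fact that $\eta$ is smooth and nowhere vanishing) that the paper leaves implicit.
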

\zzy{
  \begin{proof}
    \zzyb{Inequality} \ref{E-error} follows from the Cauchy-Schwarz inequality and \eqref{pi-error} directly.
  \end{proof}
}

From \Cref{lem_3.7} and the properties of our bilinear form in \Cref{lemma-a}, the estimate \zzy{in} energy norm can be established.

\begin{theorem}[\zzy{Error estimate} in energy norm]\label{the_3.10}{For $(\bm q,u)\in \bm H_N^{s}({\rm div};\Omega)\times H^1_D(\Omega)$ with $s>1/2$ and $(\bm q_h,u_h)\in \bm P_h\times V_h$ be the solution of \eqref{lsorg} and \eqref{lsfem}, respectively, there holds}
  \begin{align*}
    \|(\bm\Pi_{\bm P_h}\bm q-\bm q_h, \Pi_{V_h} u-u_h)\| & \le C(
    \|\sigma^{-\frac 1 2}(\bm\Pi_{\bm P_h}\bm q-\bm q)\|_0
    +\|\Pi_{V_h}u-u\|_0
    ).
  \end{align*}
\end{theorem}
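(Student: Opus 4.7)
The plan is to run a standard coercivity–orthogonality argument, with the discrete error measured against the two projections rather than against the continuous solution. Set
\[
(\bm e_h,\varepsilon_h) := (\bm\Pi_{\bm P_h}\bm q-\bm q_h,\ \Pi_{V_h}u-u_h) \in \bm P_h\times V_h,
\]
and split it via the two projections as
\[
(\bm e_h,\varepsilon_h) \;=\; (\bm\Pi_{\bm P_h}\bm q-\bm q,\ \Pi_{V_h}u-u) \;+\; (\bm q-\bm q_h,\ u-u_h).
\]
Since $(\bm e_h,\varepsilon_h)$ lives in the discrete space, I can test the bilinear form against itself.

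First I would apply the coercivity estimate \eqref{con2} to obtain
\[
C\,\|(\bm e_h,\varepsilon_h)\|^2 \;\le\; a(\bm e_h,\varepsilon_h;\bm e_h,\varepsilon_h).
\]
Then, using bilinearity and the splitting above, I would write
\[
a(\bm e_h,\varepsilon_h;\bm e_h,\varepsilon_h)
= a(\bm\Pi_{\bm P_h}\bm q-\bm q,\Pi_{V_h}u-u;\bm e_h,\varepsilon_h)
+ a(\bm q-\bm q_h,u-u_h;\bm e_h,\varepsilon_h).
\]
The second term vanishes by the Galerkin orthogonality \eqref{orth}, since $(\bm e_h,\varepsilon_h)\in\bm P_h\times V_h$. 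For the first term I would invoke \Cref{lem_3.7} directly with $(\bm p_h,v_h)=(\bm e_h,\varepsilon_h)$, which gives
\[
|a(\bm\Pi_{\bm P_h}\bm q-\bm q,\Pi_{V_h}u-u;\bm e_h,\varepsilon_h)|
\;\le\; C\bigl(\|\sigma^{-\frac12}(\bm\Pi_{\bm P_h}\bm q-\bm q)\|_0+\|\Pi_{V_h}u-u\|_0\bigr)\|(\bm e_h,\varepsilon_h)\|.
\]
Cancelling one factor of $\|(\bm e_h,\varepsilon_h)\|$ yields the desired estimate.

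There is essentially no obstacle here: the entire work has been front-loaded into \Cref{lem_3.7}, where the orthogonality of the elliptic projection, the commuting diagram \eqref{lem3.4}, and the $L^2$-orthogonality of $\Pi^o_\ell$ were used to eliminate the ``$\nabla u$'' and ``$\nabla\cdot\bm q$'' contributions of the projection errors. Once that lemma is in place, the present theorem is simply coercivity plus Galerkin orthogonality, so the proof should be only a few lines long.
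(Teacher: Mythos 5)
Your proposal is correct and follows essentially the same route as the paper: coercivity \eqref{con2}, elimination of the $(\bm q-\bm q_h,\,u-u_h)$ contribution via the Galerkin orthogonality \eqref{orth}, and then \Cref{lem_3.7} applied with the discrete error as test function, followed by cancellation of one factor of the energy norm. The only cosmetic difference is that you split the first argument of $a(\cdot;\cdot)$ while the paper shifts the second argument and uses the symmetry of $a$ to invoke \Cref{lem_3.7}; the two are equivalent.
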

\zzy{\begin{proof}
    It follows from \eqref{con2}, the orthogonality \eqref{orth} and \zzy{the estimate} \eqref{E-error} that
    \zzyb{\begin{align*}
       & \|(\bm\Pi_{\bm P_h}\bm q-\bm q_h, \Pi_{V_h} u-u_h)\|^2                                                 \\
       & \qquad  \le Ca(\bm\Pi_{\bm P_h}\bm q-\bm q_h, \Pi_{V_h} u-u_h;\bm\Pi_{\bm P_h}\bm q-\bm q_h,  \Pi_{V_h}u-u_h) \\
       & \qquad =   Ca(\bm\Pi_{\bm P_h}\bm q-\bm q_h, \Pi_{V_h} u-u_h;\bm\Pi_{\bm P_h}\bm q-\bm q,  \Pi_{V_h}u-u)      \\
       & \qquad \le C(
      \|\sigma^{-\frac 1 2}(\bm\Pi_{\bm P_h}\bm q-\bm q)\|_0
      +\|\Pi_{V_h}u-u\|_0
      )
      \|(\bm\Pi_{\bm P_h}\bm q-\bm q_h, \Pi_{V_h} u-u_h)\|,
    \end{align*}}
    which completes our proof.
  \end{proof}}
\zzy{Then}, we present error estimates in certain cases, which will be used in upcoming dual arguments.
\begin{remark}\label{rem_3.10} By the approximation properties of $\bm H({\rm div};\Omega)$ projection \cite[Lemma 3.5, Lemma 3.8]{Chen2016RobustGD} and \Cref{approximation-elliptic}, it holds
  \zzyc{
    \begin{align*}
    \|(\bm\Pi_{\bm P_h}\bm q-\bm q_h, \Pi_{V_h} u-u_h)\|\le C 
    (\|\bm q\|_{0}+\|u\|_{1})
  \end{align*}
  and}
  \begin{align*}
    \|(\bm\Pi_{\bm P_h}\bm q-\bm q_h, \Pi_{V_h} u-u_h)\|\le Ch^{\varepsilon}
    (\|\bm q\|_{\varepsilon}+\|u\|_{1})
  \end{align*}
  for any $(\bm q, u)\in \bm H^{\varepsilon}_N({\rm div};\Omega)\times H^{1+\varepsilon}_D(\Omega)$.
  Therefore, by the definition of the energy norm, we arrive at
  \zzyc{\begin{align}
    \begin{split}
       & \|\sigma^{-\frac{1}{2}}(\bm{\Pi}_{\bm P_h}\bm q-\bm q_h)\|_0
      +\|\nabla\cdot(\bm{\Pi}_{\bm P_h}\bm q-\bm q_h)\|_0
      +\|\sigma^{\frac{1}{2}}\nabla(\Pi_{V_h}u-u_h)\|_0               \\
       & \qquad \le C 
      (\|\bm q\|_{0}+\|u\|_{1})\label{es10}
    \end{split}
  \end{align}
  and}
  \begin{align}
    \begin{split}
       & \|\sigma^{-\frac{1}{2}}(\bm{\Pi}_{\bm P_h}\bm q-\bm q_h)\|_0
      +\|\nabla\cdot(\bm{\Pi}_{\bm P_h}\bm q-\bm q_h)\|_0
      +\|\sigma^{\frac{1}{2}}\nabla(\Pi_{V_h}u-u_h)\|_0               \\
       & \qquad \le Ch^{\varepsilon}
      (\|\bm q\|_{\varepsilon}+\|u\|_{1}).\label{es11}
    \end{split}
  \end{align}
  Moreover, combining the approximation properties for $\bm\Pi_{\bm P_h}$ and $\Pi_{V_h}$, yields
  \begin{align}
    \begin{split}
       & \|\sigma^{-\frac{1}{2}}(\bm q-\bm q_h)\|_0
      +\|\sigma^{\frac{1}{2}}\nabla(u-u_h)\|_0\le Ch^{\varepsilon}
      (\|\bm q\|_{\varepsilon}+\|u\|_{1+\varepsilon}).\label{es12}
    \end{split}
  \end{align}
\end{remark}
\begin{remark}\label{rem_3.11} For the case $k\ge 1$, we set $a=\min(\beta+\varepsilon-1,2\varepsilon)$ and $b=\max(1,\min(\beta,1+\varepsilon))$,
  \begin{align*}
    \|(\bm\Pi_{\bm P_h}\bm q-\bm q_h, \Pi_{V_h} u-u_h)\|\le Ch^{a}
    (\|\bm q\|_{a}+\|u\|_{b})
  \end{align*}
  for any $(\bm q, u)\in \bm H^{a}_N({\rm div};\Omega)\times H^{b}_D(\Omega)$.
  Therefore, it follows from the definition of energy norm that
  \begin{align}
    \begin{split}
       & \|\sigma^{-\frac{1}{2}}(\bm{\Pi}_{\bm P_h}\bm q-\bm q_h)\|_0
      +\|\nabla\cdot(\bm{\Pi}_{\bm P_h}\bm q-\bm q_h)\|_0
      +\|\sigma^{\frac{1}{2}}\nabla(\Pi_{V_h}u-u_h)\|_0               \\
       & \qquad \le Ch^{a}
      (\|\bm q\|_{a}+\|u\|_{b}).\label{es21}
    \end{split}
  \end{align}
\end{remark}

\zzy{By employing} the elliptic projection, we are able to \zzy{derive the} following optimal estimates, which contain the current error estimates, and some new \zzya{supercloseness} estimates. It is noted that these results are established without \zzy{$H^3$} regularity for the elliptic problems.
\begin{remark}[Optimal estimates]\label{rem_3.12} \zzy{If $\Omega$ is convex} and $k=m$, the optimal error estimate holds
  \begin{align*}
    \|\sigma^{-\frac 1 2}(\bm\Pi_{\bm P_h}\bm q-\bm q_h)\|_0
    +\|\Pi_{V_h} u-u_h\|_0\le Ch^{k+1}(
    \|\bm q\|_{k+1}+\|u\|_{k+1})
  \end{align*}
  for $(\bm q, u)\in \bm H^{k+1}_N({\rm div};\Omega)\times H^{k+1}_D(\Omega)$.
  For the case $k=m+1${, the optimal error estimate holds}
  \begin{align*}
    \|\nabla\cdot(\bm{\Pi}_{\bm P_h}\bm q-\bm q_h)\|_0  \le  Ch^{k}(\|\bm q\|_{k+1}+\|u\|_{k}  )
  \end{align*}
  for any $(\bm q, u)\in \bm H^{k+1}_N({\rm div};\Omega)\times H^{k}_D(\Omega)$,
  and $k+1=m${, the optimal error estimate holds}
  \begin{align*}
    \|\sigma^{\frac{1}{2}}\nabla(\Pi_{V_h}u-u_h)\|_0  \le  Ch^{m}(\|\bm q\|_{m}+\|u\|_{m+1}  )
  \end{align*}
  for any $(\bm q, u)\in \bm H^{m}_N({\rm div};\Omega)\times H^{m+1}_D(\Omega)$.
\end{remark}
\begin{remark}[\zzya{Supercloseness} results]\label{rem_3.13}
  The \zzya{supercloseness} can be obtained \zzy{if $\Omega$ is convex} and $k\le m$ (for $\bm{\mathcal RT}$ element, it is optimal):
  \begin{align*}
    \|\nabla\cdot(\bm{\Pi}_{\bm P_h}\bm q-\bm q_h)\|_0  \le  Ch^{k+1}(\|\bm q\|_{k+1}+\|u\|_{k+1}  )
  \end{align*}
  for any $(\bm q, u)\in \bm H^{k+1}_N({\rm div};\Omega)\times H^{k+1}_D(\Omega)$,
  and $k\ge m$:
  \begin{align*}
    \|\sigma^{\frac{1}{2}}\nabla(\Pi_{V_h}u-u_h)\|_0  \le  Ch^{m+1}(\|\bm q\|_{m+1}+\|u\|_{m+1}  )
  \end{align*}
  for any $(\bm q, u)\in \bm H^{m+1}_N({\rm div};\Omega)\times H^{m+1}_D(\Omega)$.
\end{remark}

\section{Error estimates from dual arguments}\label{err_dual}
As shown in our numerical experiments, presented in \Cref{nume_test}, the error estimates for some cases can reach \zzya{supercloseness}, which can be partially illustrated by \Cref{the_3.10}.
In this section, we \zzyb{modify} the dual arguments in Cai \zzy{\textit{et al.}} (2010) \cite{MR2585180}, to prove \zzy{additional} \zzya{supercloseness} results \zzy{in comparison to those in} \Cref{the_3.10}.

\subsection{Error \zzy{estimates} for $\Pi_{V_h}u-u_h$}\label{dua_u}

First, some regularity results will be introduced.
\begin{lemma}\label{lem_4.1}
  We define the problem: find $z\in H_D^{1}(\Omega)$ such that
  \begin{subequations}\label{lem_4.1_1}
    \begin{gather}
      -\nabla\cdot(\sigma\nabla z)-\omega^2\eta z  =\Pi_{V_h}u-u_h \quad  \text{in }\Omega,\\
      z                                            =0\quad               \text{on }\Gamma_N \quad\text{and}\quad
      \bm n\cdot\nabla z                           =0\quad               \text{on }\Gamma_D,
    \end{gather}
  \end{subequations}
  and the problem: find $(\bm \Phi,\Psi)\in \bm H_N({\rm div};\Omega)\times H^1_D(\Omega)$ such that
  \begin{align}\label{dua_u_1}
    \sigma^{-1}\bm \Phi-\nabla \Psi         =-\nabla z,\quad
    -\nabla \cdot\bm \Phi-\omega^2\eta\Psi  = -z\quad\text{in }\Omega.
  \end{align}
  Then it holds
  \begin{align}\label{dua_u_12}
    \|\bm\Phi\|_{\varepsilon}+\|\nabla\cdot\bm\Phi\|_{1+\varepsilon}+\|\Psi\|_{1+\varepsilon}+\|z\|_{1+\varepsilon}\le C\|\Pi_{V_h}u-u_h\|_0,
  \end{align}
  and
  \begin{align}\label{dua_u_13}
    \|\bm\Phi\|_{\beta}\le C\|\Pi_{V_h}u-u_h\|_0.
  \end{align}
\end{lemma}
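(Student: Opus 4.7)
The plan is to decouple the first-order mixed system \eqref{dua_u_1} into two scalar elliptic problems of the same form as \eqref{elliptic}, apply the standard elliptic regularity results \eqref{preg1} and \eqref{preg2} to each, and then recover $\bm\Phi$ and $\nabla\cdot\bm\Phi$ from the algebraic relations embedded in the mixed system. The key observation is that once $\bm\Phi$ is eliminated via $\bm\Phi=\sigma(\nabla\Psi-\nabla z)$, the unknown $\Psi$ satisfies exactly the same kind of elliptic PDE as $u$ does, so no new regularity tool is required beyond \eqref{preg1} and \eqref{preg2}.

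For the first estimate \eqref{dua_u_12}, I note that $\Pi_{V_h}u-u_h\in L^2(\Omega)$ and $-1+\varepsilon\le 0$ for $\varepsilon\in(1/2,1]$, so applying \eqref{preg1} directly to \eqref{lem_4.1_1} gives
\begin{equation*}
\|z\|_{1+\varepsilon}\le C\|\Pi_{V_h}u-u_h\|_{-1+\varepsilon}\le C\|\Pi_{V_h}u-u_h\|_0.
\end{equation*}
Substituting $\bm\Phi=\sigma(\nabla\Psi-\nabla z)$ into the divergence equation of \eqref{dua_u_1} and using \eqref{lem_4.1_1} to replace $\nabla\cdot(\sigma\nabla z)$, one obtains the scalar elliptic equation
\begin{equation*}
-\nabla\cdot(\sigma\nabla\Psi)-\omega^2\eta\Psi=(\omega^2\eta-1)z+(\Pi_{V_h}u-u_h),
\end{equation*}
with the boundary data of \eqref{elliptic} inherited from $\Psi\in H_D^1(\Omega)$ together with $\bm n\cdot\bm\Phi=0$ on $\Gamma_N$. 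A second application of \eqref{preg1} combined with the $z$ bound yields $\|\Psi\|_{1+\varepsilon}\le C\|\Pi_{V_h}u-u_h\|_0$, and then the identities $\bm\Phi=\sigma(\nabla\Psi-\nabla z)$ and $\nabla\cdot\bm\Phi=z-\omega^2\eta\Psi$ (read off directly from the second equation of \eqref{dua_u_1}) deliver the remaining two bounds in \eqref{dua_u_12}.

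For the second estimate \eqref{dua_u_13}, I introduce the auxiliary potential $\phi:=\Psi-z$, so that $\bm\Phi=\sigma\nabla\phi$; subtracting the equation for $z$ from the equation for $\Psi$ derived above gives
\begin{equation*}
-\nabla\cdot(\sigma\nabla\phi)-\omega^2\eta\phi=(\omega^2\eta-1)z,
\end{equation*}
with boundary conditions inherited consistently from those of $\Psi$ and $z$. Applying \eqref{preg2} with index $\beta$ and then invoking $\|z\|_{-1+\beta}\le C\|z\|_{1+\varepsilon}$ (valid because $\beta\le 2$ and $\varepsilon>1/2$ imply $-1+\beta<1+\varepsilon$) yields
\begin{equation*}
\|\bm\Phi\|_\beta\le C\|\phi\|_{1+\beta}\le C\|z\|_{-1+\beta}\le C\|z\|_{1+\varepsilon}\le C\|\Pi_{V_h}u-u_h\|_0.
\end{equation*}

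The main technical point is the bookkeeping of boundary conditions: one must verify that the Neumann condition $\bm n\cdot\bm\Phi=0$ on $\Gamma_N$, the Dirichlet condition $\Psi=0$ on $\Gamma_D$, and the prescribed boundary data of $z$ combine consistently to give precisely the boundary conditions required by \eqref{preg1} and \eqref{preg2} for the scalar problems satisfied by $\Psi$ and by $\phi$. Once this is in place, the decoupling-then-regularity argument produces all five bounds in a single sweep, and the remaining steps reduce to algebraic manipulations of the mixed system.
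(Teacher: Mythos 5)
Your proposal is correct and follows essentially the same route as the paper: eliminate $\bm\Phi$ via $\bm\Phi=\sigma\nabla(\Psi-z)$, derive the scalar elliptic equations for $\Psi$ and for $\Psi-z$ (the latter with right-hand side $(\omega^2\eta-1)z$, exactly the paper's \eqref{lem4.1_2}), and apply \eqref{preg1} for \eqref{dua_u_12} and \eqref{preg2} for \eqref{dua_u_13}. The only cosmetic difference is that you substitute the $z$-equation to rewrite the source for $\Psi$ as $(\omega^2\eta-1)z+(\Pi_{V_h}u-u_h)$, while the paper keeps $-z-\nabla\cdot(\sigma\nabla z)$ and bounds it in $H^{-1+\varepsilon}$; both yield the same estimates.
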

\begin{proof}
  By a direct calculation, \zzy{it holds in the distribution sense between $H_D^1$ and $H_D^{-1}$ that}
  \begin{align}
     & \quad-\nabla\cdot(\sigma\nabla\Psi)-\omega^2\eta \Psi=-z-\nabla\cdot(\sigma\nabla z),\label{lem4.1_1} \\
     & -\nabla\cdot(\sigma\nabla(\Psi-z))-\omega^2\eta (\Psi-z)=(\omega^2\eta-1)z.\label{lem4.1_2}
  \end{align}
  We find from \eqref{lem_4.1_1}, the regularity \eqref{preg1} and the Poincaré inequality that
  \zzy{$$ \|z\|_{1+\varepsilon} \le C\|\Pi_{V_h}u-u_h\|_0.$$}
  Therefore, the triangle inequality, the regularity assumption \eqref{preg1} and \eqref{lem4.1_1} imply
  \begin{align}\label{lem4.1_3}
    \begin{split}
      \|\Psi\|_{1+\varepsilon} \le C\|z+\nabla\cdot(\sigma\nabla z)\|_{-1+\varepsilon} \le C(\|z\|_{-1+\varepsilon}+ \|z\|_{1+\varepsilon})\le C\|\Pi_{V_h}u-u_h\|_0.
    \end{split}
  \end{align}
  Likewise, using the regularity assumption \eqref{preg2}, \eqref{lem4.1_2} and \eqref{dua_u_1} yields
  \begin{align*}
    \|\bm\Phi\|_{\beta}	\le C\|\Psi-z\|_{1+\beta}\le C\|z\|_1\le  C\|\Pi_{V_h}u-u_h\|_0,
  \end{align*}
  which leads to \eqref{dua_u_13}.
  The estimate \eqref{dua_u_12} follows from the regularity assumption \eqref{preg1}, \eqref{dua_u_1} and \eqref{lem4.1_3}.
\end{proof}

It is noted that \eqref{dua_u_13}, which comes from a higher regularity \eqref{preg2}, is established to obtain the \zzya{supercloseness} estimate.

We define the discrete problem corresponding to \eqref{dua_u_1}: find $(\bm \Phi_h,\Psi_h)\in \bm P_h\times V_h$ such that
\begin{align}\label{dis_4}
  a(\bm \Phi_h,\Psi_h;\bm p_h,v_h)=(-\sigma\nabla z,\sigma^{-1}\bm p_h-\nabla v_h)+(z,\nabla\cdot\bm p_h+\omega^2\eta v_h)
\end{align}
for any $(\bm p_h,v_h)\in \bm P_h\times V_h$.
\zzy{From \Cref{rem_3.10}, we can obtain
  \begin{align}
    \begin{split}
       & \|\sigma^{-\frac{1}{2}}(\bm{\Pi}_{\bm P_h}\bm \Phi-\bm \Phi_h)\|_0
      +\|\nabla\cdot(\bm{\Pi}_{\bm \Phi_h}\bm q-\bm \Phi_h)\|_0
      +\|\sigma^{\frac{1}{2}}\nabla(\Pi_{V_h}\Psi-\Psi_h)\|_0               \\
       & \qquad \le Ch^{\varepsilon}
      (\|\bm \Phi\|_{\varepsilon}+\|\Psi\|_{1}),\label{dis_4_ineq1}
    \end{split} \\
     & \|\sigma^{-\frac{1}{2}}(\bm \Phi-\bm \Phi_h)\|_0
    +\|\sigma^{\frac{1}{2}}\nabla(\Psi-\Psi_h)\|_0\le Ch^{\varepsilon}
    (\|\bm \Phi\|_{\varepsilon}+\|\Psi\|_{1+\varepsilon}).\label{dis_4_ineq2}
  \end{align}
  Moreover, for $k>1$, by \Cref{rem_3.11}
  \begin{align}
    \begin{split}
       & \|\sigma^{-\frac{1}{2}}(\bm{\Pi}_{\bm P_h}\bm \Phi-\bm \Phi_h)\|_0
      +\|\nabla\cdot(\bm{\Pi}_{\bm P_h}\bm \Phi-\bm \Phi_h)\|_0
      +\|\sigma^{\frac{1}{2}}\nabla(\Pi_{V_h}\Psi-\Psi_h)\|_0               \\
       & \qquad \le Ch^{a}
      (\|\bm \Phi\|_{a}+\|\Psi\|_{b}),\label{dis_4_ineq3}
    \end{split}
  \end{align}
  where $a=\min(\beta+\varepsilon-1,2\varepsilon)$ and $b=\max(1,\min(\beta,1+\varepsilon))$.
}

Next, we present two estimates for $a(\bm{\Pi}_{\bm P_h}\bm q-\bm q,\Pi_{V_h}u-u;\bm \Phi_h, \Psi_h)$, derived from \zzy{above from above three estimates}.

\begin{lemma}\label{lemma_a1}  The following estimate holds
  \begin{align}\label{es_a_1}
    \begin{split}
       & |a(\bm{\Pi}_{\bm P_h}\bm q-\bm q,\Pi_{V_h}u-u;\bm \Phi_h, \Psi_h)|                                                                                                                    \\
       & \quad\le Ch^{\varepsilon}(\|\bm\Phi\|_{\varepsilon}+\|\Psi\|_{1+\varepsilon})\|\sigma^{-\frac 1 2}(\bm{\Pi}_{\bm P_h}\bm q-\bm q)\|_0+ C\|(\bm\Phi_h,\Psi_h)\|\cdot\|\Pi_{V_h}u-u\|_0 \\
       & \quad\quad
      + Ch^{1+\min(\ell,\varepsilon)}
      (
      \omega^2	\|\Psi\|_{1+\min(\ell,\varepsilon)}
      +\|z\|_{1+\min(\ell,\varepsilon)}
      )	\|
      {\Pi}_{\ell}^o\nabla\cdot\bm q-\nabla\cdot\bm q\|_0
    \end{split}
  \end{align}
  for $(\bm q,u)\in \bm H_N({\rm div};\Omega)\times H^1_D(\Omega)$ be the solution of \eqref{lsorg} and any $(\bm p_h,v_h)\in \bm P_h\times V_h$, \zzy{whrere $\ell$ is stated in \Cref{lemma-commu} and $\varepsilon$ is stated in \Cref{ell_pro}.}
\end{lemma}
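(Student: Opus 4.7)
The plan is to start from the key identity \eqref{pi-error} specialized to $(\bm p_h, v_h) = (\bm\Phi_h, \Psi_h)$, then group its six terms into three classes according to which factor of the expected bound they contribute to. The three contributions are: those pairing $\bm\Pi_{\bm P_h}\bm q-\bm q$ against discrete objects, which after splitting will yield either the $h^\varepsilon$ term via \eqref{dis_4_ineq2} or the $h^{1+\min(\ell,\varepsilon)}$ term via the commuting diagram \eqref{lem3.4}; and those pairing $\Pi_{V_h}u-u$ against discrete objects, which immediately yield the $\|\Pi_{V_h}u-u\|_0\cdot\|(\bm\Phi_h,\Psi_h)\|$ term by Cauchy--Schwarz and the energy-norm definition.

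The core step is the treatment of the first two terms of \eqref{pi-error}, which I would combine as $(\bm\Pi_{\bm P_h}\bm q-\bm q,\,\sigma^{-1}\bm\Phi_h-\nabla\Psi_h)$. Using the first equation of the dual problem \eqref{dua_u_1}, I would write
\begin{align*}
\sigma^{-1}\bm\Phi_h-\nabla\Psi_h
&= \sigma^{-1}(\bm\Phi_h-\bm\Phi)-\nabla(\Psi_h-\Psi) - \nabla z,
\end{align*}
bound the first piece by Cauchy--Schwarz together with \eqref{dis_4_ineq2} (this gives the $h^\varepsilon$ contribution), and treat the $-\nabla z$ piece by integration by parts. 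Here the boundary integral vanishes because $(\bm\Pi_{\bm P_h}\bm q-\bm q)\cdot\bm n=0$ on $\Gamma_N$ and $z=0$ on $\Gamma_D$. Then \Cref{lemma-commu} gives $\nabla\cdot(\bm\Pi_{\bm P_h}\bm q-\bm q)=\Pi_\ell^o(\nabla\cdot\bm q)-\nabla\cdot\bm q$, and the orthogonality of $\Pi_\ell^o$ lets me insert $z-\Pi_\ell^o z$ as a test function. Applying \eqref{err1} to $\|z-\Pi_\ell^o z\|_0$ with the regularity $\|z\|_{1+\varepsilon}$ in \eqref{dua_u_12} yields the $h^{1+\min(\ell,\varepsilon)}\|z\|_{1+\min(\ell,\varepsilon)}$ factor.

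The analogous idea handles the term $-\omega^2(\bm\Pi_{\bm P_h}\bm q-\bm q,\nabla(\eta\Psi_h))$: write $\Psi_h=(\Psi_h-\Psi)+\Psi$, control the discrete-error part directly by Cauchy--Schwarz together with \eqref{dis_4_ineq2} and Poincar\'e (contributing again to the $h^\varepsilon$ term), and on the continuous part integrate by parts. The boundary term disappears because $\Psi\in H^1_D(\Omega)$ and $\bm\Pi_{\bm P_h}\bm q-\bm q\in\bm H_N({\rm div};\Omega)$; what remains is $\omega^2(\Pi_\ell^o\nabla\cdot\bm q-\nabla\cdot\bm q,\eta\Psi-\Pi_\ell^o(\eta\Psi))$, bounded via \eqref{err1} with the regularity $\|\Psi\|_{1+\varepsilon}$ from \eqref{dua_u_12} to produce $h^{1+\min(\ell,\varepsilon)}\omega^2\|\Psi\|_{1+\min(\ell,\varepsilon)}\|\Pi_\ell^o\nabla\cdot\bm q-\nabla\cdot\bm q\|_0$.

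Finally, the three remaining summands $(\Pi_{V_h}u-u,\nabla\cdot\bm\Phi_h)$, $\omega^2(\eta(\Pi_{V_h}u-u),\nabla\cdot\bm\Phi_h)$ and $\omega^4(\eta^2(\Pi_{V_h}u-u),\Psi_h)$ are straightforward: Cauchy--Schwarz produces $\|\Pi_{V_h}u-u\|_0$ times one of $\|\nabla\cdot\bm\Phi_h\|_0$, $\|\omega^2\eta\Psi_h\|_0$, all of which are controlled by $\|(\bm\Phi_h,\Psi_h)\|$. Collecting the three groups of estimates gives \eqref{es_a_1}. The main technical obstacle is the careful bookkeeping of the two integration-by-parts arguments: one must verify both boundary terms vanish and must choose the split $\Psi_h=(\Psi_h-\Psi)+\Psi$ (and similarly $\sigma^{-1}\bm\Phi_h-\nabla\Psi_h$) so that the rough discrete difference is paired only with $h^\varepsilon$-type regularity while the extra $h^{1+\min(\ell,\varepsilon)}$ rate is extracted from the smoother exact solution $(\bm\Phi,\Psi,z)$ via the commuting diagram.
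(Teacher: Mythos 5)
Your proposal follows essentially the same route as the paper's proof: it starts from \eqref{pi-error} with $(\bm p_h,v_h)=(\bm\Phi_h,\Psi_h)$, uses $\sigma^{-1}\bm\Phi-\nabla\Psi=-\nabla z$ to split the first two terms into a discrete-error part (bounded via \eqref{dis_4_ineq2}) plus a $\nabla z$ remainder handled by integration by parts, the commuting property \eqref{lem3.4}, the orthogonality of $\Pi_\ell^o$ and \eqref{err1}, treats the $\omega^2(\cdot,\nabla(\eta\Psi_h))$ term by the analogous split-and-integrate-by-parts argument, and disposes of the remaining three terms by Cauchy--Schwarz against the energy norm of $(\bm\Phi_h,\Psi_h)$. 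The only cosmetic deviation is that you split $\Psi_h$ at $\Psi$ rather than at $\Pi_{V_h}\Psi$ in the $\omega^2$ term, which yields the same bound (with $\|\Psi\|_{1+\varepsilon}$ in place of $\|\Psi\|_1$, still covered by the stated estimate).
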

\begin{proof} From \eqref{pi-error} and the fact 	$\sigma^{-1}\bm \Phi-\nabla \Psi=-\nabla z$, we obtain
  \begin{align}\label{a_1}
    \begin{split}
       & a(\bm{\Pi}_{\bm P_h}\bm q-\bm q,\Pi_{V_h}u-u;\bm \Phi_h, \Psi_h)       \\
       & \quad=(\sigma^{-1}(\bm{\Pi}_{\bm P_h}\bm q-\bm q ),\bm \Phi_h-\bm\Phi)
      -(\bm{\Pi}_{\bm P_h}\bm q-\bm q,\nabla (\Psi_h-\Psi))
      \\
       & \quad\quad+(\Pi_{V_h}u-u ,\nabla\cdot\bm \Phi_h)
      -\omega^2(\bm{\Pi}_{\bm P_h}\bm q-\bm q ,\nabla(\eta \Psi_h) )            \\
       & \quad\quad
      +\omega^2(\eta(\Pi_{V_h}u-u),\nabla\cdot\bm\Phi_h)
      +\omega^4(\eta^2(\Pi_{V_h}u-u),\Psi_h)
      +(\bm{\Pi}_{\bm P_h}\bm q-\bm q,\nabla z)                                 \\
       & \quad\quad =:\sum_{i=1}^7E_i.
    \end{split}
  \end{align}
  In above equation, $E_i$ can be estimated term by term  in details.
  By the Cauchy-Schwarz inequality and \zzy{\eqref{dis_4_ineq2}}, it holds
  \begin{align*}
    |E_1|+|E_2| & \le C(\|\sigma^{-\frac 1 2}(\bm\Phi_h-\bm\Phi)\|_0
    +\|\sigma^{\frac 1 2}\nabla(\Psi_h-\Psi)\|_0
    )\|\sigma^{-\frac 1 2}(\bm{\Pi}_{\bm P_h}\bm q-\bm q)\|_0                                                                                       \\
                & \le Ch^{\varepsilon}(\|\bm\Phi\|_{\varepsilon}+\|\Psi\|_{1+\varepsilon})\|\sigma^{-\frac 1 2}(\bm{\Pi}_{\bm P_h}\bm q-\bm q)\|_0.
  \end{align*}
  \zzy{The estimates} of $E_3$, $E_5$ and $E_6$ follow from the Cauchy-Schwarz inequality directly
  \begin{align*}
    |E_3|+|E_5|  \le C\|\nabla\cdot\bm \Phi_h\|_0\|\Pi_{V_h}u-u\|_0,\
    |E_6|  \le C\omega^2(\omega^2\|\eta\Psi_h\|_0) \|\Pi_{V_h}u-u\|_0.
  \end{align*}
  \zzy{We find from the integration by parts, the Cauchy-Schwarz inequality and \eqref{dis_4_ineq1} that}
  \begin{align*}
                                                                    & |E_4|                                                             \\                                                           & =\omega^2|(\nabla\cdot(\bm\Pi_{\bm P_h}\bm q-\bm q),
    \eta\Psi_h-\Pi_{\ell}^o(\eta\Pi_{V_h}\Psi))|                                                                                        \\
                                                                    & =\omega^2|(\nabla\cdot(\bm\Pi_{\bm P_h}\bm q-\bm q),
    \eta(\Psi_h-\Pi_{V_h}\Psi)+(\eta\Pi_{V_h}\Psi-\Pi_{\ell}^o(\eta\Pi_{V_h}\Psi)))|                                                    \\
                                                                    & \le \omega^2|(\sigma^{-\frac{1}{2}}(\bm\Pi_{\bm P_h}\bm q-\bm q),
    \sigma^{\frac{1}{2}}\nabla\eta(\Psi_h-\Pi_{V_h}\Psi))|                                                                              \\
                                                                    & \quad
    + \omega^2|(\nabla\cdot(\bm\Pi_{\bm P_h}\bm q-\bm q),
    \eta\Pi_{V_h}\Psi-\Pi_{\ell}^o(\eta\Pi_{V_h}\Psi))|
    \\                                                                    & \le C\omega^2\|\sigma^{-\frac{1}{2}}(\bm\Pi_{\bm P_h}\bm q-\bm q)\|_0\|\sigma^{\frac{1}{2}}\nabla( \Psi_h-\Pi_{V_h}\Psi  )\|_0 \\
                                                                    & \quad
    +C\omega^2\|\nabla\cdot(\bm\Pi_{\bm P_h}\bm q-\bm q)\|_0
    h^{1+\min(\ell,\varepsilon)}\|\Psi\|_{1+\min(\ell,\varepsilon)} & (\text{by \eqref{err1}})
    \\
                                                                    & \le C\omega^2h^{\varepsilon}
    (
    \|\bm\Phi\|_{\varepsilon}+
    \|\Psi\|_{1})
    \|\sigma^{-\frac 1 2}(\bm\Pi_{\bm P_h}\bm q-\bm q)\|_0          & (\text{by \eqref{es11}})
    \\
                                                                    & \quad+
    C\omega^2
    h^{1+\min(\ell,\varepsilon)}\|\Psi\|_{1+\min(\ell,\varepsilon)}
    \|
    {\Pi}_{\ell}^o\nabla\cdot\bm q-\nabla\cdot\bm q\|_0.            & (\text{by \eqref{lem3.4}})
  \end{align*}
  Using the Cauchy-Schwarz inequality, \eqref{err1} and \eqref{lem3.4} yields
  \begin{align*}
    |E_7| & =|(\nabla\cdot(\bm{\Pi}_{\bm P_h}\bm q-\bm q), z)|
    =|({\Pi}_{\ell}^o\nabla\cdot\bm q-\nabla\cdot\bm q, z-{\Pi}_{\ell}^oz)| \\
          & \le C	\|
    {\Pi}_{\ell}^o\nabla\cdot\bm q-\nabla\cdot\bm q\|_0h^{1+\min(\ell,\varepsilon)}\|z\|_{1+\min(\ell,\varepsilon)}.
  \end{align*}
  The estimate \eqref{es_a_1} follows from above results and the definition of energy norm.
\end{proof}

The second estimation requires $k\ge 1$ ($\mathcal{RT}_0$ is excluded).

\begin{lemma}\label{lemma_a2} For the case $k\ge 1$, the following estimate holds
  \begin{align}\label{E-error3}
    \begin{split}
      | & a(\bm{\Pi}_{\bm P_h}\bm q-\bm q,\Pi_{V_h}u-u;\bm \Phi_h, \Psi_h)| \\
        & \quad\le
      Ch^{\min(\beta+\varepsilon-1,2\varepsilon)}(\|\bm\Phi\|_{\beta}+\|\Psi\|_{1+\varepsilon})
      (\|\sigma^{-\frac 1 2}(\bm{\Pi}_{\bm P_h}\bm q-\bm q)\|_0             \\
        & \quad\quad+\|\Pi_{\ell}^o\nabla\cdot\bm q-\nabla\cdot\bm q\|_0)
      \\
        & \quad\quad+Ch^{\varepsilon}(
      \|\bm\Phi\|_{\varepsilon}+
      \|\Psi\|_{1+\varepsilon}
      )(h^{\varepsilon}\|\Pi_{V_h}u-u\|_{0}+\|\Pi_{V_h}u-u\|_{-1,D})
      \\
        & \quad \quad
      +Ch\|\Pi_{V_h}u-u\|_0
      \|\nabla\cdot\bm \Phi\|_1
      +
      C\|\Pi_{V_h}u-u\|_{-1,D}(\omega^2\|\nabla\Psi\|_0+\|\nabla z\|_0)     \\
        & \quad\quad
      +C \omega^4\|\nabla\Psi_h\|_0\|\Pi_{V_h}u-u\|_{-1,D}
      \\
        & \quad\quad
      +Ch^{1+\min(\ell,\varepsilon)}\|\Pi_{\ell}^o\nabla\cdot\bm q-\nabla\cdot\bm q\|_0
      (
      \omega^2\|\Psi\|_{1+\min(\ell,\varepsilon)}
      +	\|z\|_{1+\min(\ell,\varepsilon)}
      )
    \end{split}
  \end{align}
  for $(\bm q,u)\in \bm H_N({\rm div};\Omega)\times H^1_D(\Omega)$ be the solution of \eqref{lsorg} and any $(\bm p_h,v_h)\in \bm P_h\times V_h$, \zzy{whrere $\ell$ is stated in \Cref{lemma-commu} and $\varepsilon,\beta$ are stated in \Cref{ell_pro}.}
\end{lemma}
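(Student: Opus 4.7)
The plan is to follow the strategy of the proof of \Cref{lemma_a1}: starting from the identity \eqref{pi-error} applied with $(\bm p_h,v_h)=(\bm\Phi_h,\Psi_h)$ and using the dual equations \eqref{dua_u_1}, I would decompose $a(\bm\Pi_{\bm P_h}\bm q-\bm q,\Pi_{V_h}u-u;\bm\Phi_h,\Psi_h)$ into the same seven terms $\sum_{i=1}^{7}E_i$ appearing in \eqref{a_1}. The sharper bound \eqref{E-error3} relative to \eqref{es_a_1} rests on two upgrades of the tools used in \Cref{lemma_a1}: the higher-order discrete error estimate \eqref{dis_4_ineq3}, which is precisely the reason why the hypothesis $k\ge 1$ is required, and the negative-norm inequality \eqref{neg_norm}, which is applicable whenever $\Pi_{V_h}u-u$ can be paired against a function lying in $H_D^1(\Omega)$.

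For the flux-side terms $E_1,E_2,E_4,E_7$, $E_1$ is bounded by Cauchy--Schwarz together with \eqref{dis_4_ineq3} and the standard $H(\mathrm{div})$ approximation for $\bm\Pi_{\bm P_h}\bm\Phi-\bm\Phi$, yielding the $\|\sigma^{-1/2}(\bm\Pi_{\bm P_h}\bm q-\bm q)\|_0$ contribution on the first line of \eqref{E-error3}. For $E_2$ I would first integrate by parts (all boundary terms vanish) and apply \Cref{lemma-commu} to rewrite the term as $(\Pi_\ell^o\nabla\cdot\bm q-\nabla\cdot\bm q,\Psi_h-\Psi)$, and then use Cauchy--Schwarz with the bound $\|\Psi_h-\Psi\|_0\le Ch^a(\|\bm\Phi\|_\beta+\|\Psi\|_{1+\varepsilon})$ coming from \eqref{dis_4_ineq3}, \Cref{approximation-elliptic}, and Poincar\'e, producing the $\|\Pi_\ell^o\nabla\cdot\bm q-\nabla\cdot\bm q\|_0$ contribution. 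The term $E_7=(\bm\Pi_{\bm P_h}\bm q-\bm q,\nabla z)$ is treated exactly as in \Cref{lemma_a1}: integration by parts, \Cref{lemma-commu}, insertion of $\Pi_\ell^o z$ through $L^2$-orthogonality, and \eqref{err1}; this gives the final interpolation line. The term $E_4$ is again decomposed as in \Cref{lemma_a1} by inserting $\Pi_\ell^o(\eta\Pi_{V_h}\Psi)$, but the discrete remainders are now controlled by \eqref{dis_4_ineq3} so that they contribute to the first line with the sharp exponent $h^{a}$ rather than $h^\varepsilon$.

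The main new work concerns the scalar-side terms $E_3,E_5,E_6$ involving $\Pi_{V_h}u-u$. The central observation is that the dual PDE \eqref{dua_u_1} gives $\nabla\cdot\bm\Phi=z-\omega^2\eta\Psi\in H_D^1(\Omega)$ (using $z=\Psi=0$ on $\Gamma_D$ and smoothness of $\eta$), and $\Psi_h\in V_h\subset H_D^1(\Omega)$, so several pieces can be paired with $\Pi_{V_h}u-u$ directly via \eqref{neg_norm}. For $E_3$ I would split $\nabla\cdot\bm\Phi_h=\nabla\cdot\bm\Phi+(\Pi_\ell^o\nabla\cdot\bm\Phi-\nabla\cdot\bm\Phi)+\nabla\cdot(\bm\Phi_h-\bm\Pi_{\bm P_h}\bm\Phi)$: the continuous piece contributes $C\|\Pi_{V_h}u-u\|_{-1,D}(\omega^2\|\nabla\Psi\|_0+\|\nabla z\|_0)$ via \eqref{neg_norm}; the projection piece equals $\Pi_\ell^o\nabla\cdot\bm\Phi-\nabla\cdot\bm\Phi$ by \Cref{lemma-commu} and yields $Ch\|\Pi_{V_h}u-u\|_0\|\nabla\cdot\bm\Phi\|_1$ via Cauchy--Schwarz and \eqref{err1}; the discrete piece is bounded by Cauchy--Schwarz with \eqref{dis_4_ineq1}. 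The term $E_6$ follows from \eqref{neg_norm} applied with test function $\eta^2\Psi_h\in H_D^1(\Omega)$, producing $C\omega^4\|\nabla\Psi_h\|_0\|\Pi_{V_h}u-u\|_{-1,D}$; any remaining piece of the form $(\Pi_{V_h}u-u,\Psi_h-\Psi)$ is bounded by \eqref{neg_norm} combined with $\|\nabla(\Psi_h-\Psi)\|_0\le Ch^\varepsilon(\|\bm\Phi\|_\varepsilon+\|\Psi\|_{1+\varepsilon})$, which supplies the $h^\varepsilon\|\Pi_{V_h}u-u\|_{-1,D}$ grouping. The term $E_5$ is handled by an analogous continuous/projection/discrete decomposition.

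The main obstacle will be the bookkeeping: each splitting has to be arranged so that every remainder lands in exactly one of the six terms on the right-hand side of \eqref{E-error3} with the correct power of $h$ and the correct norm on $\Pi_{V_h}u-u$. In particular, the coexistence of $h^\varepsilon\|\Pi_{V_h}u-u\|_{-1,D}$ and $h^{2\varepsilon}\|\Pi_{V_h}u-u\|_0$ in the second grouping of \eqref{E-error3} reflects a careful separation of the discrete test functions into an $H_D^1(\Omega)$-smooth part that pairs through the negative norm, and a residual part bounded in $L^2$ using the extra factor of $h^\varepsilon$ that the $L^2$-supercloseness of the projection errors provides once $k\ge 1$.
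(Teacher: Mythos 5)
Your decomposition into the seven terms $E_1,\dots,E_7$ of \eqref{a_1} and your treatment of $E_1$, $E_2$, $E_4$, $E_6$, $E_7$ match the paper's proof. The genuine gap is in $E_3$ (and likewise $E_5$): you bound the discrete piece $(\Pi_{V_h}u-u,\nabla\cdot(\bm\Phi_h-\bm\Pi_{\bm P_h}\bm\Phi))$ ``by Cauchy--Schwarz with \eqref{dis_4_ineq1}'', which yields only
$C h^{\varepsilon}(\|\bm\Phi\|_{\varepsilon}+\|\Psi\|_{1})\,\|\Pi_{V_h}u-u\|_0$. The target \eqref{E-error3} requires the strictly stronger bound
$C h^{\varepsilon}(\|\bm\Phi\|_{\varepsilon}+\|\Psi\|_{1+\varepsilon})\bigl(h^{\varepsilon}\|\Pi_{V_h}u-u\|_{0}+\|\Pi_{V_h}u-u\|_{-1,D}\bigr)$, i.e.\ an extra factor $h^{\varepsilon}$ on the $L^2$ part or a downgrade to the $\|\cdot\|_{-1,D}$ norm. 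Since $\|\Pi_{V_h}u-u\|_0$ is not controlled by $h^{\varepsilon}\|\Pi_{V_h}u-u\|_{0}+\|\Pi_{V_h}u-u\|_{-1,D}$, plain Cauchy--Schwarz loses a full power of $h^{\varepsilon}$ here (in the smooth convex case this is the difference between $h^{m+2}$ and $h^{m+3}$), and the downstream supercloseness estimate \eqref{l2-err-u-2} would degrade accordingly.

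The paper closes this gap with the separate Lemma~\ref{lem4.4}, whose proof introduces a \emph{second} dual problem \eqref{dual_r_w} with data $-(\Pi_{V_h}u-u)$ and its LSFEM discretization $(\bm r_h,w_h)$. The pairing $(\Pi_{V_h}u-u,\nabla\cdot(\bm\Phi_h-\bm\Pi_{\bm P_h}\bm\Phi))$ (plus a compensating $\omega^2$-term) is rewritten as $a(\bm r_h,w_h;\bm\Phi_h-\bm\Pi_{\bm P_h}\bm\Phi,\Psi_h-\Pi_{V_h}\Psi)$, the symmetry of $a$ is used to swap the arguments, and the resulting expression is expanded via the analogue of \eqref{pi-error} into products of projection errors of $(\bm\Phi,\Psi)$ against $\|\bm r_h\|_0+\|\nabla w_h\|_0$, which the regularity \eqref{reg_w_1} and a triangle inequality bound by $Ch^{\varepsilon}\|\Pi_{V_h}u-u\|_{0}+C\|\Pi_{V_h}u-u\|_{-1,D}$. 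That duality step is the source of the extra gain; your closing remark attributes it to ``$L^2$-supercloseness of the projection errors once $k\ge 1$'', but Lemma~\ref{lem4.4} does not need $k\ge 1$ at all --- the restriction $k\ge 1$ in Lemma~\ref{lemma_a2} enters only through \eqref{dis_4_ineq3} in the bounds for $E_1$, $E_2$, $E_4$. Without the auxiliary dual problem your argument does not reach \eqref{E-error3}.
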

\begin{proof}
  Same as the proof of \Cref{lemma_a1}, we estimate \eqref{a_1} in terms.
  By the use of the Cauchy-Schwarz inequality and \zzy{the estimate \eqref{dis_4_ineq3}}, we obtain
  \begin{align*}
    |E_1| & \le C\|\sigma^{-\frac 1 2}(\bm\Phi_h-\bm\Phi)\|_0
    \|\sigma^{-\frac 1 2}(\bm{\Pi}_{\bm P_h}\bm q-\bm q)\|_0                                                                                                       \\
          & \le Ch^{\min(\beta+\varepsilon-1,2\varepsilon)}(\|\bm\Phi\|_{\min(\beta+\varepsilon-1,2\varepsilon)}+\|\Psi\|_{\max(1,\min(\beta,1+\varepsilon))})     \\
          & \quad\|\sigma^{-\frac 1 2}(\bm{\Pi}_{\bm P_h}\bm q-\bm q)\|_0                                                                                          \\
          & \le Ch^{\min(\beta+\varepsilon-1,2\varepsilon)}(\|\bm\Phi\|_{\beta}+\|\Psi\|_{1+\varepsilon})\|\sigma^{-\frac 1 2}(\bm{\Pi}_{\bm P_h}\bm q-\bm q)\|_0.
  \end{align*}
  Moreover, by \eqref{approximation-elliptic}
  \begin{align*}
    |E_2| & =|(\nabla\cdot(\bm{\Pi}_{\bm P_h}\bm q-\bm q),\Psi_h-\Psi)|                                                                                        \\
          & \le C\|\nabla\cdot(\bm{\Pi}_{\bm P_h}\bm q-\bm q)\|_0\|\Psi_h-\Psi\|_0                                                                             \\
          & \le  C\|\nabla\cdot(\bm{\Pi}_{\bm P_h}\bm q-\bm q)\|_0( \|\nabla(\Psi_h-\Pi_{V_h}\Psi)\|_0+\|\Pi_{V_h}\Psi-\Psi\|_0)                               \\
          & \le Ch^{\min(\beta+\varepsilon-1,2\varepsilon)}(\|\bm\Phi\|_{\min(\beta+\varepsilon-1,2\varepsilon)}+\|\Psi\|_{\max(1,\min(\beta,1+\varepsilon))}) \\
          & \quad\cdot\|\Pi_{\ell}^o\nabla\cdot\bm q-\nabla\cdot\bm q\|_0                                                                                      \\
          & \le Ch^{\min(\beta+\varepsilon-1,2\varepsilon)}(\|\bm\Phi\|_{\beta}+\|\Psi\|_{1+\varepsilon})\|\Pi_{\ell}^o\nabla\cdot\bm q-\nabla\cdot\bm q\|_0.
  \end{align*}
  To estimate $E_3$ and $E_5$, we apply \eqref{estp-key} given in \Cref{lem4.4}, which yields
  \begin{align*}
     & |E_3|+|E_5| \\
     & \quad \le
    Ch^{\varepsilon}(
    \|\bm\Phi\|_{\varepsilon}+
    \|\Psi\|_{1+\varepsilon}
    )(h^{\varepsilon}\|\Pi_{V_h}u-u\|_{0}+\|\Pi_{V_h}u-u\|_{-1,D})
    \\
     & \quad \quad
    +Ch\|\Pi_{V_h}u-u\|_0
    \|\nabla\cdot\bm \Phi\|_1
    +
    C\|\Pi_{V_h}u-u\|_{-1,D}(\omega^2\|\nabla\Psi\|_0+\|\nabla z\|_0).
  \end{align*}
  We find from \zzy{the estimate} of $E_2$, the Cauchy-Schwarz inequality and the error estimate \eqref{err1} that
  \begin{align*}
    |E_4| & =\omega^2|(\nabla\cdot(\bm\Pi_{\bm P_h}\bm q-\bm q),\eta(\Psi_h-\Psi)+(\eta\Psi-\Pi_{\ell}^o(\eta\Psi)))|  \\
          & \le C\omega^2(\|\nabla\cdot(\bm\Pi_{\bm P_h}\bm q-\bm q)\|_0\|\Psi_h-\Psi\|_0                              \\
          & \quad+       \|\Pi_{\ell}^o\nabla\cdot\bm q-\nabla\cdot\bm q\|_0  \|\eta\Psi-\Pi_{\ell}^o(\eta\Psi)\|_0  ) \\
          & \le
    C\omega^2h^{\min(\beta+\varepsilon-1,2\varepsilon)}(\|\bm\Phi\|_{\beta}+\|\Psi\|_{1+\varepsilon})\|\Pi_{\ell}^o\nabla\cdot\bm q-\nabla\cdot\bm q\|_0
    \\
          & \quad+
    C\omega^2h^{1+\min(\ell,\varepsilon)}\|\Pi_{\ell}^o\nabla\cdot\bm q-\nabla\cdot\bm q\|_0\|\Psi\|_{1+\min(\ell,\varepsilon)}.
  \end{align*}
  Using \eqref{neg_norm}, we obtain
  \begin{align*}
    |E_6|\le C\omega^4\|\nabla\Psi_h\|_0\|\Pi_{V_h}u-u\|_{-1,D}.
  \end{align*}
  Applying the Cauchy-Schwarz inequality and \eqref{err1}, it holds
  \begin{align*}
    |E_7| & =|(\nabla\cdot(\bm{\Pi}_{\bm P_h}\bm q-\bm q), z)|
    =|({\Pi}_{\ell}^o\nabla\cdot\bm q-\nabla\cdot\bm q, z-{\Pi}_{\ell}^oz)| \\
          & \le C h^{1+\min(\ell,\varepsilon)}\|
    {\Pi}_{\ell}^o\nabla\cdot\bm q-\nabla\cdot\bm q\|_0\|z\|_{1+\min(\ell,\varepsilon)}.
  \end{align*}
  The estimate \eqref{E-error3} is established by summing up all above results.
\end{proof}
\begin{lemma}\label{lem4.4}
  For the discrete problem \eqref{dis_4}, the following estimate holds
  \begin{subequations}
  \begin{align}
    \begin{split}\label{estp-key}
       & |(\Pi_{V_h}u-u,\nabla\cdot\bm\Phi_h)| \\
       & \quad\quad\le
      Ch^{\varepsilon}(
      \|\bm\Phi\|_{\varepsilon}+
      \|\Psi\|_{1+\varepsilon}
      )(h^{\varepsilon}\|\Pi_{V_h}u-u\|_{0}+\|\Pi_{V_h}u-u\|_{-1,D})
      \\
       & \quad\quad
      +Ch\|\Pi_{V_h}u-u\|_0
      \|\nabla\cdot\bm \Phi\|_1+C\|\Pi_{V_h}u-u\|_{-1,D}(\omega^2\|\nabla\Psi\|_0+\|\nabla z\|_0),\\
    \end{split}
  \end{align}
  \zzyc{\begin{align}
    \begin{split}\label{estp-key2}
       & |(\Pi_{V_h}u-u,\nabla\cdot\bm\Phi_h)| \\
       & \quad\quad\le
      C (
      \|\bm\Phi\|_{0}+
      \|\Psi\|_{1}
      )(h^{\varepsilon}\|\Pi_{V_h}u-u\|_{0}+\|\Pi_{V_h}u-u\|_{-1,D})
      \\
       & \quad\quad
      +Ch\|\Pi_{V_h}u-u\|_0
      \|\nabla\cdot\bm \Phi\|_1+C\|\Pi_{V_h}u-u\|_{-1,D}(\omega^2\|\nabla\Psi\|_0+\|\nabla z\|_0),\\
    \end{split}
  \end{align}}
  \end{subequations}
\end{lemma}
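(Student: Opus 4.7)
The plan is to decompose $(\Pi_{V_h}u-u,\nabla\cdot\bm\Phi_h)$ using integration by parts and split it into a ``residual part'' involving $\bm\Phi_h-\bm\Phi$ and a ``continuous part'' involving $\bm\Phi$ alone. Since $\Pi_{V_h}u-u\in H^1_D(\Omega)$ and $\bm\Phi_h-\bm\Phi\in\bm H_N({\rm div};\Omega)$, the boundary terms vanish and I can write
$$
(\Pi_{V_h}u-u,\nabla\cdot\bm\Phi_h)
=-(\nabla(\Pi_{V_h}u-u),\bm\Phi_h-\bm\Phi)
+(\Pi_{V_h}u-u,\nabla\cdot\bm\Phi).
$$

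For the continuous part, I would substitute $\nabla\cdot\bm\Phi=z-\omega^2\eta\Psi$ from \eqref{dua_u_1}. Since $\eta\Psi\in H^1_D(\Omega)$, the negative norm inequality \eqref{neg_norm} directly bounds the $\omega^2(\Pi_{V_h}u-u,\eta\Psi)$ piece by $C\omega^2\|\Pi_{V_h}u-u\|_{-1,D}\|\nabla\Psi\|_0$. For the $(\Pi_{V_h}u-u,z)$ piece, since $z$ only vanishes on $\Gamma_N$ (not $\Gamma_D$), I cannot apply \eqref{neg_norm} directly; instead I would go through the $\bm\Phi$ equation, integrating by parts on $\bm\Phi=\sigma(\nabla\Psi-\nabla z)$ against $\nabla(\Pi_{V_h}u-u)$, applying the elliptic projection orthogonality to eliminate the $\Pi_{V_h}\Psi$-piece, and tracking the surviving terms to produce the factor $C\|\Pi_{V_h}u-u\|_{-1,D}(\omega^2\|\nabla\Psi\|_0+\|\nabla z\|_0)$.

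For the residual part, I would expand $\sigma^{-1}(\bm\Phi-\bm\Phi_h)=\nabla(\Psi-\Psi_h)-r_h$ with $r_h:=\sigma^{-1}\bm\Phi_h-\nabla\Psi_h+\nabla z$, splitting the inner product into $(\sigma\nabla(\Pi_{V_h}u-u),\nabla(\Psi-\Psi_h))$ and $-(\sigma\nabla(\Pi_{V_h}u-u),r_h)$. Using elliptic projection orthogonality with $v_h=\Pi_{V_h}\Psi-\Psi_h\in V_h$ kills the $\Pi_{V_h}\Psi-\Psi_h$ component, leaving $(\sigma\nabla(\Pi_{V_h}u-u),\nabla(\Psi-\Pi_{V_h}\Psi))$. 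To convert the remaining $\nabla(\Pi_{V_h}u-u)$-factors into $\|\Pi_{V_h}u-u\|_0$ (with an $h^\varepsilon$) and $\|\Pi_{V_h}u-u\|_{-1,D}$ factors, I would integrate by parts back onto $\Pi_{V_h}u-u$, use the commutation \eqref{lem3.4} with $\bm\Pi_{\bm P_h}$ to replace derivatives of $\Psi-\Pi_{V_h}\Psi$ by piecewise-polynomial projections (picking up extra $h$ factors from \eqref{err1}), and then pair with the dual LSFEM approximation bound \eqref{dis_4_ineq1} for the residual. The auxiliary term $Ch\|\Pi_{V_h}u-u\|_0\|\nabla\cdot\bm\Phi\|_1$ naturally appears from applying orthogonality of $\Pi_\ell^o$ after the commutation. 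The second estimate \eqref{estp-key2} follows by the same bookkeeping, except that the approximation bound \eqref{dis_4_ineq1} is replaced by the stability bound \eqref{es10}, which removes the outer $h^\varepsilon$ factor and substitutes $\|\bm\Phi\|_0+\|\Psi\|_1$ for $\|\bm\Phi\|_\varepsilon+\|\Psi\|_{1+\varepsilon}$.

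The main obstacle is the $(\Pi_{V_h}u-u,z)$ piece: since $z$ does not vanish on $\Gamma_D$, the negative norm of $\Pi_{V_h}u-u$ cannot be tested against $z$ directly, and a careful integration-by-parts detour through the $\bm\Phi$ equation is needed to avoid spurious boundary integrals. A secondary difficulty is the systematic conversion of the $\nabla(\Pi_{V_h}u-u)$-factors appearing in the residual part into the weaker $h^\varepsilon\|\Pi_{V_h}u-u\|_0+\|\Pi_{V_h}u-u\|_{-1,D}$ combination, which requires simultaneous use of the elliptic projection orthogonality and the commuting property of $\bm\Pi_{\bm P_h}$ together with the enhanced regularity estimate \eqref{dua_u_13}.
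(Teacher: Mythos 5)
Your decomposition of the ``easy'' pieces (the term $(\Pi_{V_h}u-u,\nabla\cdot(\bm\Pi_{\bm P_h}\bm\Phi-\bm\Phi))$ handled through $\Pi_\ell^o$ and \eqref{err1}, and the term $(\Pi_{V_h}u-u,-\omega^2\eta\Psi+z)$ handled through \eqref{neg_norm}) matches the paper. But there is a genuine gap in how you treat the remaining discrete piece. The crux of the lemma is to bound $(\Pi_{V_h}u-u,\nabla\cdot(\bm\Phi_h-\bm\Pi_{\bm P_h}\bm\Phi))$ by the \emph{product}
$\bigl(h^{\varepsilon}\|\Pi_{V_h}u-u\|_0+\|\Pi_{V_h}u-u\|_{-1,D}\bigr)$ times a small factor in $(\bm\Phi,\Psi)$. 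The paper achieves this by introducing a \emph{second, nested dual problem}: find $(\bm r,w)$ with $\sigma^{-1}\bm r-\nabla w=0$, $-\nabla\cdot\bm r-\omega^2\eta w=-(\Pi_{V_h}u-u)$, together with its LSFEM approximation $(\bm r_h,w_h)$. Testing the discrete dual equation with $(\bm\Phi_h-\bm\Pi_{\bm P_h}\bm\Phi,\Psi_h-\Pi_{V_h}\Psi)$ represents the troublesome term as $a(\bm\Phi_h-\bm\Pi_{\bm P_h}\bm\Phi,\Psi_h-\Pi_{V_h}\Psi;\bm r_h,w_h)$, which by Galerkin orthogonality and the identity \eqref{pi-error} becomes projection errors of $(\bm\Phi,\Psi)$ paired against $(\bm r_h,w_h)$; the factor $h^{\varepsilon}\|\Pi_{V_h}u-u\|_0+\|\Pi_{V_h}u-u\|_{-1,D}$ then comes from $\|\bm r_h\|_0+\|\nabla w_h\|_0$ via the regularity \eqref{reg_w_1} and the error bound \eqref{es12}. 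Your proposal never introduces this auxiliary problem.

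Without it, your substitute manipulations do not reach the stated bound. Expanding $\sigma^{-1}(\bm\Phi-\bm\Phi_h)=\nabla(\Psi-\Psi_h)-r_h$ and using the elliptic-projection orthogonality leaves you with terms of the form $(\sigma\nabla(\Pi_{V_h}u-u),r_h)$ and $(\sigma\nabla(\Pi_{V_h}u-u),\nabla(\Psi-\Pi_{V_h}\Psi))$, which Cauchy--Schwarz bounds only by $\|\nabla(\Pi_{V_h}u-u)\|_0$ times an $O(h^{\varepsilon})$ factor; this is a strictly stronger norm than $h^{\varepsilon}\|\Pi_{V_h}u-u\|_0+\|\Pi_{V_h}u-u\|_{-1,D}$ and would destroy the supercloseness orders downstream in Theorem \ref{the_u} and Theorem \ref{the_q}. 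Your proposed escape --- ``integrate by parts back onto $\Pi_{V_h}u-u$'' --- fails because $r_h$ contains $\nabla\Psi_h$ with $\Psi_h$ only $H^1$-conforming, so $\sigma r_h\notin\bm H(\mathrm{div};\Omega)$ and elementwise integration by parts generates uncontrolled face-jump terms. Finally, the ``main obstacle'' you flag (that $z$ does not vanish on $\Gamma_D$) is not the actual difficulty: in the paper $z$ is taken in $H^1_D(\Omega)$ (the displayed boundary conditions in \eqref{lem_4.1_1} notwithstanding), and \eqref{neg_norm} is applied to $z$ directly. Your closing remark about \eqref{estp-key2} (replace the approximation bound by the stability bound \eqref{es10}) is the right idea for the second estimate, but it only helps once the nested duality step is in place.
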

\begin{proof}
  It follows that
  \begin{alignat*}{2}
     & |(\Pi_{V_h}u-u,\nabla\cdot\bm \Phi_h)| =|(\Pi_{V_h}u-u,\nabla\cdot(\bm{\Phi}_h-\bm\Phi)-\omega^2\eta\Psi+z)|              & \text{(by \eqref{dua_u_1})}  \\
     & \qquad\le|(\Pi_{V_h}u-u,\nabla\cdot(\bm{\Phi}_h-\bm\Pi_{\bm P_h}\bm\Phi)) |                                                                              \\
     & \qquad\quad+|(\Pi_{V_h}u-u,\nabla\cdot(\bm\Pi_{\bm P_h}\bm{\Phi}-\bm\Phi)) | +|(\Pi_{V_h}u-u,-\omega^2\eta \Psi + z)|                                    \\
     & \qquad\le |(\Pi_{V_h}u-u,\nabla\cdot(\bm{\Phi}_h-\bm\Pi_{\bm P_h}\bm\Phi)) |+Ch\|\nabla\cdot\bm\Phi\|_1\|\Pi_{V_h}u-u\|_0 & (\text{by \eqref{err1}})     \\
     & \qquad\quad+C\|\Pi_{V_h}u-u\|_{-1,D}(\omega^2\|\nabla\Psi\|_0+\|\nabla z\|_0).                                            & \text{(by \eqref{neg_norm})}
  \end{alignat*}

  We define the dual problem: find $(\bm r,w)\in \bm H^1_N({\rm div};\Omega)\times H_D^{1}(\Omega)$ such that
  \begin{align}\label{dual_r_w}
    \sigma^{-1}\bm r-\nabla w         =0,\quad
    -\nabla\cdot\bm r-\omega^2\eta w & =-(\Pi_{V_h}u-u)\quad  \text{in } \Omega.
  \end{align}
  According to \eqref{dual_r_w}, there holds
  \begin{align*}
    -\nabla\cdot(\sigma\nabla w)-\omega^2\eta w & =-(\Pi_{V_h}u-u),
  \end{align*}
  which, combining with the regularity \eqref{preg1} and \eqref{preg3}, leads to
  \zzy{\begin{align}\label{reg_w_1}
      \|w\|_{1+\varepsilon} & \le C\|\Pi_{V_h}u-u\|_0   \quad\text{and}\quad
      \|w\|_1                \le C\|\Pi_{V_h}u-u\|_{-1,D}.
    \end{align}}

  Next, we define the discrete problem of \eqref{dual_r_w}: find $(\bm r_h,w_h)\in \bm P_h\times V_h$ such that
  \begin{align}
    a(\bm r_h,w_h;\bm p_h,v_h)=(\Pi_{V_h}u-u,\nabla\cdot\bm p_h+\omega^2\eta v_h) \quad\forall (\bm p_h,v_h)\in \bm P_h\times V_h.
  \end{align}

  Therefore, it is clear that from \eqref{pi-error} and the integration by parts that
  \begin{align*}
     & (\Pi_{V_h}u-u,\nabla\cdot(\bm{\Phi}_h-\bm\Pi_{\bm P_h}\bm\Phi)) +(\Pi_{V_h}u-u,\omega^2\eta(\Psi_h-\Pi_{V_h}\Psi)) \\
     & \quad=a(\bm r_h,w_h;\bm{\Phi}_h-\bm{\Pi}_{\bm P_h}\bm\Phi,\Psi_h-\Pi_{V_h}\Psi)                                    \\&\quad =a(\bm{\Phi}_h-\bm{\Pi}_{\bm P_h}\bm\Phi,\Psi_h-\Pi_{V_h}\Psi;\bm r_h,w_h) \\
     & \quad=(\sigma^{-1}(\bm{\Pi}_{\bm P_h}\bm \Phi-\bm \Phi ),\bm r_h)
    -(\bm{\Pi}_{\bm P_h}\bm \Phi-\bm \Phi,\nabla w_h)
    -(\nabla(\Pi_{V_h}\Psi-\Psi) ,\bm r_h)                                                                                \\
     & \quad\quad
    -\omega^2(\bm{\Pi}_{\bm P_h}\bm \Phi-\bm \Phi ,\nabla(\eta w_h) )
    +\omega^2(\nabla(\eta(\Pi_{V_h}\Psi-\Psi)),\bm r_h)                                                                   \\
     & \quad\quad+\omega^4(\eta^2(\Pi_{V_h}\Psi-\Psi),w_h)                                                                \\
     & \quad\le C \zzyc{(\|\bm{\Pi}_{\bm P_h}\bm \Phi-\bm \Phi\|_0 + \|\Pi_{V_h}\Psi-\Psi\|_0 + \|\nabla(\Pi_{V_h}\Psi-\Psi)\|_0)}(\|\bm r_h\|_0+\|\nabla w_h\|_0).
  \end{align*}
  We apply the triangle inequality and the regularity \eqref{reg_w_1} to find
  \begin{align*}
    \|\bm r_h\|_0+\|\nabla w_h\|_0
     & \le \|\bm r_h-\bm r\|_0+\|\nabla (w_h-w)\|_0+\|\bm r\|_0+\|\nabla w\|_0                                  \\
     & \le Ch^{\varepsilon}(\|\bm r\|_{\varepsilon}+\|w\|_{1+\varepsilon})+\|\sigma \nabla w\|_0+\|\nabla w\|_0 \\
     & \le Ch^{\varepsilon}\|\Pi_{V_h}u-u\|_{0}+C\|\Pi_{V_h}u-u\|_{-1,D}.
  \end{align*}
  Using the Cauchy-Schwarz inequality yields
  \zzyc{\begin{align*}
    |(\Pi_{V_h}u-u,\omega^2\eta(\Psi_h-\Pi_{V_h}\Psi))| & \le C\omega^2\|\Pi_{V_h}u-u\|_{-1,D}\|\nabla(\Psi_h-\Pi_{V_h}\Psi)\|_0,
  \end{align*}}
  \zzyc{which, combing all above estimates directly imply 
  \begin{align*}
    & |(\Pi_{V_h}u-u,\nabla\cdot\bm \Phi_h)|\\
    &\quad\leq (\|\bm{\Pi}_{\bm P_h}\bm \Phi-\bm \Phi\|_0 + \|\Pi_{V_h}\Psi-\Psi\|_0 + \|\nabla(\Pi_{V_h}\Psi-\Psi)\|_0)(h^{\varepsilon}\|\Pi_{V_h}u-u\|_{0}+C\|\Pi_{V_h}u-u\|_{-1,D})\\
    &\quad\quad+Ch\|\nabla\cdot\bm\Phi\|_1\|\Pi_{V_h}u-u\|_0  +C\|\Pi_{V_h}u-u\|_{-1,D}(\omega^2\|\nabla\Psi\|_0+\|\nabla z\|_0).
  \end{align*}
  Then, \eqref{estp-key} and \eqref{estp-key2} follow from \eqref{es11} and \eqref{es10}. The proof is complete.}
\end{proof}

The main theorem comes from \Cref{lem_4.1,lemma_a1,lem4.4}.

\begin{theorem}[The $L^2$ error \zzy{estimate} for $u$]\label{the_u}{For $(\bm q,u)\in \bm H_N^{s}({\rm div};\Omega)\times H^1_D(\Omega)$ with $s>1/2$ and $(\bm q_h,u_h)\in \bm P_h\times V_h$ be the solution of \eqref{lsorg} and \eqref{lsfem}, respectively, there hold}
  \begin{align}\label{l2-err-u-1}
    \begin{split}
      \|\Pi_{V_h}u-u_h\|_0 & \le  C\left(
      h^{\varepsilon} \|\sigma^{-\frac 1 2}(\bm{\Pi}_{\bm P_h}\bm q-\bm q)\|_0
      +\|\Pi_{V_h}u-u\|_0
      \right)                             \\
                           & \quad
      +	C h^{1+\min(\ell,\varepsilon)}\|{\Pi}_{\ell}^o\nabla\cdot\bm q-\nabla\cdot\bm q\|_0,
    \end{split}
  \end{align}
  and when $k\ge 1$,
  \begin{align}\label{l2-err-u-2}
    \begin{split}
      \|\Pi_{V_h}u-u_h\|_0
       & \le
      Ch^{\min(\beta+\varepsilon-1,2\varepsilon)}
      (\|\sigma^{-\frac 1 2}(\bm{\Pi}_{\bm P_h}\bm q-\bm q)\|_0
      +\|\Pi_{\ell}^o\nabla\cdot\bm q-\nabla\cdot\bm q\|_0)
      \\
       & \quad +C(h\|\Pi_{V_h}u-u\|_{0}+\|\Pi_{V_h}u-u\|_{-1,D})                                  \\
       & \quad +Ch^{1+\min(\ell,\varepsilon)}\|\Pi_{\ell}^o\nabla\cdot\bm q-\nabla\cdot\bm q\|_0,
    \end{split}
  \end{align}
  \zzy{whrere $\ell$ is stated in \Cref{lemma-commu} and $\varepsilon,\beta$ are stated in \Cref{ell_pro}.}
\end{theorem}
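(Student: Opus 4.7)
The plan is a standard Aubin-Nitsche-style dual argument adapted to the LSFEM framework. Let $(\bm\Phi,\Psi,z)$ be the dual triple from \Cref{lem_4.1} associated with the source $\Pi_{V_h}u-u_h$, and let $(\bm\Phi_h,\Psi_h)\in\bm P_h\times V_h$ denote the discrete solution of \eqref{dis_4}. The first move is to test \eqref{dis_4} with $(\bm p_h,v_h)=(\bm\Pi_{\bm P_h}\bm q-\bm q_h,\Pi_{V_h}u-u_h)$. Symmetry of $a(\cdot;\cdot)$ combined with the Galerkin orthogonality \eqref{orth} (applied with the discrete test $(\bm\Phi_h,\Psi_h)$) collapses the LHS to $a(\bm\Pi_{\bm P_h}\bm q-\bm q,\Pi_{V_h}u-u;\bm\Phi_h,\Psi_h)$, which is precisely the quantity estimated by \Cref{lemma_a1} and \Cref{lemma_a2}. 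On the RHS of \eqref{dis_4}, integration by parts combined with the strong equation $-\nabla\cdot(\sigma\nabla z)-\omega^2\eta z=\Pi_{V_h}u-u_h$ extracts $\|\Pi_{V_h}u-u_h\|_0^2$ modulo cross terms of the form $(\nabla z,\bm\Pi_{\bm P_h}\bm q-\bm q_h)$ and $\omega^2(\eta z,\Pi_{V_h}u-u_h)$.

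For \eqref{l2-err-u-1}, I would then invoke \Cref{lemma_a1}, use the regularity \eqref{dua_u_12} together with the stability \eqref{sta} applied to \eqref{dis_4} (whose data are $-\nabla z$ and $-z$) to replace $\|\bm\Phi\|_\varepsilon$, $\|\Psi\|_{1+\varepsilon}$, $\|z\|_{1+\min(\ell,\varepsilon)}$ and $\|(\bm\Phi_h,\Psi_h)\|$ by $C\|\Pi_{V_h}u-u_h\|_0$, and then divide by $\|\Pi_{V_h}u-u_h\|_0$. The same skeleton yields \eqref{l2-err-u-2} for $k\ge 1$, but with \Cref{lemma_a2} replacing \Cref{lemma_a1}: the sharper regularity $\|\bm\Phi\|_\beta\le C\|\Pi_{V_h}u-u_h\|_0$ from \eqref{dua_u_13} (itself requiring the higher-order regularity \eqref{preg2}) delivers the improved rate $h^{\min(\beta+\varepsilon-1,2\varepsilon)}$, while \Cref{lem4.4} brings the negative-norm term $\|\Pi_{V_h}u-u\|_{-1,D}$ into play via its bound on $|(\Pi_{V_h}u-u,\nabla\cdot\bm\Phi_h)|$.

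The main technical obstacle is the treatment of the cross terms produced by the IBP on the RHS of \eqref{dis_4}. A naive Cauchy-Schwarz on $(\nabla z,\bm\Pi_{\bm P_h}\bm q-\bm q_h)$ loses the superconvergence factor $h^\varepsilon$ on $\|\sigma^{-1/2}(\bm\Pi_{\bm P_h}\bm q-\bm q)\|_0$. The remedy is to use $\nabla z=\nabla\Psi-\sigma^{-1}\bm\Phi$ from \eqref{dua_u_1} and to integrate by parts once more so that the derivative transfers onto $\bm\Pi_{\bm P_h}\bm q-\bm q_h$ in the $\bm H(\mathrm{div})$ sense; the boundary contributions vanish thanks to the $\bm H_N(\mathrm{div})$- and $H_D^1$-conformity of the spaces. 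The resulting quantities $\|\nabla\cdot(\bm\Pi_{\bm P_h}\bm q-\bm q_h)\|_0$ and $\|\sigma^{-1/2}(\bm\Pi_{\bm P_h}\bm q-\bm q_h)\|_0$ are then absorbed by the primary energy estimate \Cref{the_3.10}, and the $\omega^2$-scaled remainders are closed off using the hypothesis that $\omega^2$ is not an eigenvalue of \eqref{eigen}. For \eqref{l2-err-u-2}, an additional subtlety is balancing $\|\Pi_{V_h}u-u\|_0$ against $\|\Pi_{V_h}u-u\|_{-1,D}$ through \Cref{lem4.4}, which is precisely what allows the extra factor of $h^{\min(\beta-1,\varepsilon)}$ to appear.
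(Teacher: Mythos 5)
Your overall skeleton matches the paper: build the dual triple $(\bm\Phi,\Psi,z)$ of \Cref{lem_4.1}, test the discrete dual problem \eqref{dis_4} with $(\bm\Pi_{\bm P_h}\bm q-\bm q_h,\Pi_{V_h}u-u_h)$, use symmetry and the orthogonality \eqref{orth} to pass to $a(\bm\Pi_{\bm P_h}\bm q-\bm q,\Pi_{V_h}u-u;\bm\Phi_h,\Psi_h)$, invoke \Cref{lemma_a1} (resp.\ \Cref{lemma_a2} with \Cref{lem4.4} and \eqref{dua_u_13}) and convert all dual norms to $\|\Pi_{V_h}u-u_h\|_0$ via \eqref{dua_u_12} and the stability \eqref{sta} before dividing. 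That is exactly how the paper closes the argument.

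However, there is a genuine gap in your treatment of the right-hand side. You claim the RHS of \eqref{dis_4} yields $\|\Pi_{V_h}u-u_h\|_0^2$ only ``modulo cross terms'' $(\nabla z,\bm\Pi_{\bm P_h}\bm q-\bm q_h)$ and $\omega^2(\eta z,\Pi_{V_h}u-u_h)$, and you propose to handle the first by transferring the derivative onto $\bm\Pi_{\bm P_h}\bm q-\bm q_h$ and absorbing $\|\nabla\cdot(\bm\Pi_{\bm P_h}\bm q-\bm q_h)\|_0$ through \Cref{the_3.10}, and the second ``using the hypothesis that $\omega^2$ is not an eigenvalue.'' Neither step survives scrutiny. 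Since $\|z\|_0\le C\|\Pi_{V_h}u-u_h\|_0$ carries no power of $h$, bounding $|(z,\nabla\cdot(\bm\Pi_{\bm P_h}\bm q-\bm q_h))|$ through the energy estimate of \Cref{the_3.10} produces $C\|\Pi_{V_h}u-u_h\|_0\,\|\sigma^{-\frac12}(\bm\Pi_{\bm P_h}\bm q-\bm q)\|_0$ \emph{without} the factor $h^{\varepsilon}$, which destroys precisely the gain that \eqref{l2-err-u-1} asserts; and a surviving term $\omega^2(\eta z,\Pi_{V_h}u-u_h)$ would be of the same order as $\|\Pi_{V_h}u-u_h\|_0^2$ itself and cannot be absorbed — the non-eigenvalue hypothesis only buys well-posedness and regularity, not smallness. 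The point you are missing is that the dual data $(-\nabla z,-z)$ are chosen so that no cross terms survive at all: after adding and subtracting $(\bm\Pi_{\bm P_h}\bm q-\bm q_h,\nabla z)=-(\nabla\cdot(\bm\Pi_{\bm P_h}\bm q-\bm q_h),z)$ the identity $\|\Pi_{V_h}u-u_h\|_0^2=a(\bm\Phi_h,\Psi_h;\bm\Pi_{\bm P_h}\bm q-\bm q_h,\Pi_{V_h}u-u_h)$ is exact. The only place $\nabla z$ reappears is as the term $E_7=(\bm\Pi_{\bm P_h}\bm q-\bm q,\nabla z)$ inside \Cref{lemma_a1}, which involves the \emph{projection} error rather than the discrete error; there, integration by parts, the commuting property \eqref{lem3.4} and the double orthogonality of $\Pi_{\ell}^o$ give the superconvergent bound $Ch^{1+\min(\ell,\varepsilon)}\|\Pi_{\ell}^o\nabla\cdot\bm q-\nabla\cdot\bm q\|_0\|z\|_{1+\min(\ell,\varepsilon)}$, while the $h^{\varepsilon}$ in front of $\|\sigma^{-\frac12}(\bm\Pi_{\bm P_h}\bm q-\bm q)\|_0$ comes from inserting the continuous dual solution into $E_1,E_2$ and estimating $\bm\Phi_h-\bm\Phi$, $\Psi_h-\Psi$ by \eqref{dis_4_ineq2}. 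Your proof as written would only recover the weaker bound without the $h^{\varepsilon}$ and $h^{1+\min(\ell,\varepsilon)}$ factors.
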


\begin{proof}
  \zzy{From the stability \eqref{sta} and \eqref{preg1} we have
    \begin{align*}
      \|(\bm\Phi_h,\Psi_h)\|\le C(\|\nabla z\|_0+\|z\|_0)\le C\|\Pi_{V_h}u-u_h\|_0,
    \end{align*}
    which together with \eqref{es_a_1} and the regularity \eqref{dua_u_12} implies that
    \begin{align*}
       & \|\Pi_{V_h}u-u_h\|_0^2 =( \sigma\nabla (\Pi_{V_h} u-u_h),\nabla z  )
      -\omega^2(\eta(\Pi_{V_h} u-u_h) , z)                                                                                                                                                     \\
       & \quad =( \sigma(\nabla (\Pi_{V_h}u-u_h)-\sigma^{-1}(\bm{\Pi}_{\bm P_h}\bm q-\bm q_h) ),\nabla z  )                                                                                    \\
       & \quad\quad-
      (\nabla\cdot(\bm{\Pi}_{\bm P_h} \bm q-\bm q_h)
      +\omega^2\eta(\Pi_{V_h} u-u_h), z)                                                                                                                                                       \\
       & \quad =a(\bm\Phi_h,\Psi_h;\bm{\Pi}_{\bm P_h}\bm q-\bm q_h,\Pi_{V_h}u-u_h)                                                                                                             \\
       & \quad =a(\bm\Phi_h,\Psi_h;\bm{\Pi}_{\bm P_h}\bm q-\bm q,\Pi_{V_h}u-u)                                                                                                                 \\
       & \quad\le Ch^{\varepsilon}(\|\bm\Phi\|_{\varepsilon}+\|\Psi\|_{1+\varepsilon})\|\sigma^{-\frac 1 2}(\bm{\Pi}_{\bm P_h}\bm q-\bm q)\|_0+ C\|(\bm\Phi_h,\Psi_h)\|\cdot\|\Pi_{V_h}u-u\|_0 \\
       & \quad\quad
      + Ch^{1+\min(\ell,\varepsilon)}(\omega^2	\|\Psi\|_{1+\min(\ell,\varepsilon)}+\|z\|_{1+\min(\ell,\varepsilon)})	\|{\Pi}_{\ell}^o\nabla\cdot\bm q-\nabla\cdot\bm q\|_0                     \\
       & \quad\le C\left(h^{\varepsilon}\|\sigma^{-\frac 1 2}(\bm{\Pi}_{\bm P_h}\bm q-\bm q)\|_0
      +\|\Pi_{V_h}u-u\|_0\right)\|\Pi_{V_h}u-u_h\|_0                                                                                                                                           \\
       & \quad\quad+	C h^{1+\min(\ell,\varepsilon)}	\|\Pi_{V_h}u-u_h\|_0\|{\Pi}_{\ell}^o\nabla\cdot\bm q-\nabla\cdot\bm q\|_0.
    \end{align*}}

  \zzy{By \eqref{es12} and the regularity \eqref{dua_u_12}, there holds
    \begin{align*}
      \|\nabla \Psi_h\|_0\le\|\nabla (\Psi_h - \Psi)\|_0+\|\nabla \Psi\|_0\le C\|\Pi_{V_h}u-u_h\|_0,
    \end{align*}
    together with \eqref{E-error3}, the regularity \eqref{dua_u_12} and \eqref{dua_u_13}, lead to
    \begin{align*}
       & \|\Pi_{V_h}u-u_h\|_0^2 =a(\bm\Phi_h,\Psi_h;\bm{\Pi}_{\bm P_h}\bm q-\bm q,\Pi_{V_h}u-u) \\
       & \quad \le
      Ch^{\min(\beta+\varepsilon-1,2\varepsilon)}(\|\bm\Phi\|_{\beta}+\|\Psi\|_{1+\varepsilon})
      (\|\sigma^{-\frac 1 2}(\bm{\Pi}_{\bm P_h}\bm q-\bm q)\|_0                                 \\
       & \quad\quad+\|\Pi_{\ell}^o\nabla\cdot\bm q-\nabla\cdot\bm q\|_0)
      \\
       & \quad \quad+Ch^{\varepsilon}(
      \|\bm\Phi\|_{\varepsilon}+
      \|\Psi\|_{1+\varepsilon}
      )(h^{\varepsilon}\|\Pi_{V_h}u-u\|_{0}+\|\Pi_{V_h}u-u\|_{-1,D})
      \\
       & \quad \quad
      +Ch\|\Pi_{V_h}u-u\|_0
      \|\nabla\cdot\bm \Phi\|_1
      +
      C\|\Pi_{V_h}u-u\|_{-1,D}(\omega^2\|\nabla\Psi\|_0+\|\nabla z\|_0)                         \\
       & \quad \quad
      +C \omega^4\|\nabla\Psi_h\|_0\|\Pi_{V_h}u-u\|_{-1,D}
      \\
       & \quad \quad
      +C	h^{1+\min(\ell,\varepsilon)}\|\Pi_{\ell}^o\nabla\cdot\bm q-\nabla\cdot\bm q\|_0
      (
      \omega^2\|\Psi\|_{1+\min(\ell,\varepsilon)}
      +	\|z\|_{1+\min(\ell,\varepsilon)}
      )                                                                                         \\
       & \quad \le
      Ch^{\min(\beta+\varepsilon-1,2\varepsilon)}
      (\|\sigma^{-\frac 1 2}(\bm{\Pi}_{\bm P_h}\bm q-\bm q)\|_0
      +\|\Pi_{\ell}^o\nabla\cdot\bm q-\nabla\cdot\bm q\|_0)                                     \\
       & \quad\quad\cdot\|\Pi_{V_h}u-u_h\|_0
      +C \left(h\|\Pi_{V_h}u-u\|_{0}+\|\Pi_{V_h}u-u\|_{-1,D}\right)\|\Pi_{V_h}u-u_h\|_0         \\
       & \quad \quad
      +Ch^{1+\min(\ell,\varepsilon)}\|\Pi_{\ell}^o\nabla\cdot\bm q-\nabla\cdot\bm q\|_0\|\Pi_{V_h}u-u_h\|_0.
    \end{align*}
    This completes the proof.}
\end{proof}

Some direct results can be arrived in \eqref{l2-err-u-1} and \eqref{l2-err-u-2}. In the {following} remarks, $(\bm q,u)$ is the solution of \eqref{lsorg} which is assumed to be smooth enough.
\begin{remark}[Suboptimal estimate]
  For the pair $\bm{\mathcal{BDM}}_{1}/\mathcal P_{2}$, only the suboptimal convergence estimate holds
  \begin{align*}
    \|\Pi_{V_h}u-u_h\|_0 \le h^2(\|q\|_2+\|\nabla \cdot q\|_1+\|u\|_3).
  \end{align*}
\end{remark}
\begin{remark}[Optimal estimates]
  For the case $\Omega$ satisfies $\varepsilon=1$, besides the results presented in \Cref{rem_3.13}, more optimal convergence estimates can be obtained from \eqref{l2-err-u-1} that
  \zzy{\begin{align*}
       & \|\Pi_{V_h}u-u_h\|_0  \le Ch^{m+1}(\|\bm q\|_{m}+\|\nabla \cdot \bm q\|_{m-1}+ \|u\|_{m+1})
    \end{align*}
    for $\bm{\mathcal{BDM}}_{m-1}/\mathcal P_{m}, m>2$; $\bm{\mathcal{BDM}}_{m}/\mathcal P_{m}, m>1$; $\bm{\mathcal{BDM}}_{m+1}/\mathcal P_{m}, m \ge 1$; $\bm{\mathcal{RT}}_{k}/\mathcal P_{m},$ $ k=m-1,m,m+1$.}
  \zzy{\zzyb{Compared} to \Cref{rem_3.12}, these optimal estimates require only $\bm q\in \bm H^m$ while $u\in H^{m+1}$.}
\end{remark}
\begin{remark}[\zzya{Supercloseness} estimates]
  For the domain satisfying $\beta = 2$ and $\varepsilon = 1$, from \eqref{l2-err-u-2}, the \zzya{supercloseness} estimates hold when $m>1$ that
  \zzy{\begin{align*}
       & \|\Pi_{V_h}u-u_h\|_0  \le Ch^{m+2} (\|\bm q\|_{m}+\|\nabla \cdot \bm q\|_{m}+ \|u\|_{m+1})
    \end{align*}
    for $\bm{\mathcal{BDM}}_{k}/\mathcal P_{m},k=m,m+1$; $\bm{\mathcal{RT}}_{k}/\mathcal P_{m},k=m-1,m,m+1$.}
\end{remark}

\subsection{Error \zzy{estimates} for $\nabla\cdot(\mathbf\Pi_{\mathbf P_h}\mathbf q-\mathbf q_h)$}\label{dua_div_q}

In this section, \zzyb{we} perform the dual argument for $\nabla\cdot(\bm\Pi_{\bm P_h}\bm q-\bm q_h)$ to get the \zzya{supercloseness} result for some specific choices of $\bm P_h$ and $V_h$.

We define the problem: find $(\bm\Phi,\Psi)\in \bm H_N({\rm div};\Omega)\times H^1_D(\Omega)$ such that
\begin{align*}
  \sigma^{-1}\bm{\Phi}-\nabla \Psi        =\bm 0                                       , \quad
  -\nabla\cdot\bm \Phi-\omega^2\eta \Psi  =-\nabla\cdot(\bm\Pi_{\bm P_h}\bm q-\bm q_h) \quad \text{in }\Omega.
\end{align*}
Then the regularity result holds
\begin{align}\label{reg4}
  \|\bm\Phi\|_{\varepsilon}
  +\|\Psi\|_{1+\varepsilon}\le C\|\nabla\cdot(\bm\Pi_{\bm P_h}\bm q-\bm q_h)\|_0.
\end{align}
We define the discrete problem: find $(\bm\Phi_h,\Psi_h)\in \bm P_h\times V_h$ such that
\begin{align}\label{div-dual-dis}
  a(\bm\Phi_h,\Psi_h;\bm p_h,v_h)=(\nabla\cdot(\bm\Pi_{\bm P_h}\bm q-\bm q_h),\nabla\cdot\bm p_h+\omega^2 \eta v_h)
\end{align}
holds for any $(\bm p_h,v_h)\in \bm P_h\times V_h$. Then we can prove the following result.
\begin{theorem}[The $L^2$ error estimate for $\nabla \cdot \bm q$]\label{the_div_q} {For $(\bm q,u)\in \bm H_N^{s}({\rm div};\Omega)\times H^1_D(\Omega)$ with $s>1/2$ and $(\bm q_h,u_h)\in \bm P_h\times V_h$ be the solution of \eqref{lsorg} and \eqref{lsfem}, respectively, there holds}
  \begin{align}\label{div_q}
    \begin{split}
      \|\nabla\cdot(\bm\Pi_{\bm P_h}\bm q-\bm q_h)\|_0
       & \le C\left(h^{\varepsilon} \|\sigma^{-\frac 1 2}(\bm{\Pi}_{\bm P_h}\bm q-\bm q)\|_0
      +\|\Pi_{V_h}u-u\|_0\right)                                                                               \\
       & \quad+ C\omega^2 h^{1+\min(\ell,\varepsilon)}\|{\Pi}_{\ell}^o(\nabla\cdot\bm q)-\nabla\cdot\bm q\|_0,
    \end{split}
  \end{align}
  \zzy{whrere $\ell$ is stated in \Cref{lemma-commu} and $\varepsilon$ is stated in \Cref{ell_pro}.}
\end{theorem}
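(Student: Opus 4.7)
The plan is to mirror the dual argument of \Cref{the_u}: reduce $\|\nabla\cdot(\bm\Pi_{\bm P_h}\bm q-\bm q_h)\|_0^2$ to a bilinear form evaluated at projection errors, apply the identity \eqref{pi-error}, and bound each resulting piece using the regularity \eqref{reg4} of the new dual $(\bm\Phi,\Psi)$ together with the discrete-problem error bounds from \Cref{rem_3.10}. The opening move is to test \eqref{div-dual-dis} with $(\bm p_h,v_h)=(\bm\Pi_{\bm P_h}\bm q-\bm q_h,\Pi_{V_h}u-u_h)$, which yields
\begin{align*}
a(\bm\Phi_h,\Psi_h;\bm\Pi_{\bm P_h}\bm q-\bm q_h,\Pi_{V_h}u-u_h)=\|\nabla\cdot(\bm\Pi_{\bm P_h}\bm q-\bm q_h)\|_0^2+\omega^2(\nabla\cdot(\bm\Pi_{\bm P_h}\bm q-\bm q_h),\eta(\Pi_{V_h}u-u_h)).
\end{align*}
Using the symmetry of $a$ and subtracting the Galerkin orthogonality identity \eqref{orth} with the pair $(\bm\Phi_h,\Psi_h)\in\bm P_h\times V_h$, the first term on the right becomes $a(\bm\Pi_{\bm P_h}\bm q-\bm q,\Pi_{V_h}u-u;\bm\Phi_h,\Psi_h)$, isolating the projection errors in the first slot of the bilinear form.

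Next I would expand this bilinear form via \eqref{pi-error} into six pieces and estimate them following the blueprint of \Cref{lemma_a1}. Since $\sigma^{-1}\bm\Phi-\nabla\Psi=0$ in the present dual problem, the first two pieces combine into $(\bm\Pi_{\bm P_h}\bm q-\bm q,\sigma^{-1}(\bm\Phi_h-\bm\Phi)-\nabla(\Psi_h-\Psi))$, controlled by $Ch^\varepsilon(\|\bm\Phi\|_\varepsilon+\|\Psi\|_{1+\varepsilon})\|\sigma^{-1/2}(\bm\Pi_{\bm P_h}\bm q-\bm q)\|_0$ via the discrete energy estimate in \Cref{rem_3.10}. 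The inner products against $\nabla\cdot\bm\Phi_h$ and $\Psi_h$ are dispatched by Cauchy-Schwarz together with the stability \eqref{sta} for the discrete dual, which yields $\|\nabla\cdot\bm\Phi_h\|_0+\|\Psi_h\|_0\le C\|\nabla\cdot(\bm\Pi_{\bm P_h}\bm q-\bm q_h)\|_0$. The most delicate piece $-\omega^2(\bm\Pi_{\bm P_h}\bm q-\bm q,\nabla(\eta\Psi_h))$ is handled exactly as in the proof of \Cref{lemma_a1}: integration by parts moves the gradient onto $\bm\Pi_{\bm P_h}\bm q-\bm q$, \Cref{lemma-commu} converts $\nabla\cdot(\bm\Pi_{\bm P_h}\bm q-\bm q)$ into $\Pi_\ell^o\nabla\cdot\bm q-\nabla\cdot\bm q$, and orthogonality of $\Pi_\ell^o$ against $\Pi_\ell^o(\eta\Pi_{V_h}\Psi)$ extracts the factor $h^{1+\min(\ell,\varepsilon)}$. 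Finally the regularity \eqref{reg4} replaces $\|\bm\Phi\|_\varepsilon+\|\Psi\|_{1+\varepsilon}$ by $C\|\nabla\cdot(\bm\Pi_{\bm P_h}\bm q-\bm q_h)\|_0$, so every contribution here carries a factor of $\|\nabla\cdot(\bm\Pi_{\bm P_h}\bm q-\bm q_h)\|_0$ to be divided out at the end.

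What remains is the correction $-\omega^2(\nabla\cdot(\bm\Pi_{\bm P_h}\bm q-\bm q_h),\eta(\Pi_{V_h}u-u_h))$. Cauchy-Schwarz bounds it by $C\omega^2\|\nabla\cdot(\bm\Pi_{\bm P_h}\bm q-\bm q_h)\|_0\cdot\|\Pi_{V_h}u-u_h\|_0$, and the crucial move is to insert the already-proven estimate \eqref{l2-err-u-1} of \Cref{the_u} for $\|\Pi_{V_h}u-u_h\|_0$; this is precisely what preserves the $h^\varepsilon$ prefactor in front of $\|\sigma^{-1/2}(\bm\Pi_{\bm P_h}\bm q-\bm q)\|_0$ and produces the $\omega^2 h^{1+\min(\ell,\varepsilon)}$ weighting of the divergence-projection term claimed in \eqref{div_q}. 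This is where I expect the main obstacle: a cruder substitution of $\|\Pi_{V_h}u-u_h\|_0$ from the primary energy bound \Cref{the_3.10} would produce $\|\sigma^{-1/2}(\bm\Pi_{\bm P_h}\bm q-\bm q)\|_0$ with no gain of $h^\varepsilon$ and thereby destroy the supercloseness, so invoking the dual-refined \Cref{the_u} at exactly this step is essential. Dividing through by $\|\nabla\cdot(\bm\Pi_{\bm P_h}\bm q-\bm q_h)\|_0$ then yields the stated inequality.
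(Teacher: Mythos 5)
Your proposal is correct and follows essentially the same route as the paper: test \eqref{div-dual-dis} with $(\bm\Pi_{\bm P_h}\bm q-\bm q_h,\Pi_{V_h}u-u_h)$, pass to $a(\bm\Pi_{\bm P_h}\bm q-\bm q,\Pi_{V_h}u-u;\bm\Phi_h,\Psi_h)$ by symmetry and \eqref{orth}, bound it as in \Cref{lemma_a1} using \eqref{reg4} and the discrete stability, and absorb the $\omega^2$-coupling term via \eqref{l2-err-u-1}. The only cosmetic difference is that you re-derive the term-by-term bound from \eqref{pi-error} for the new dual problem (correctly noting that the $z$-contribution is absent since $\sigma^{-1}\bm\Phi-\nabla\Psi=\bm 0$), whereas the paper simply cites \eqref{es_a_1}; your version is, if anything, slightly more careful on that point.
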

\zzy{ \begin{proof} We take $\bm p_h=\bm\Pi_{\bm P_h}\bm q-\bm q_h\in \bm P_h$ in \eqref{div-dual-dis}, together with the stability \eqref{sta}, \eqref{es_a_1} and the regularity \eqref{reg4} to get
    \begin{align*}
       & \|\nabla\cdot(\bm\Pi_{\bm P_h}\bm q-\bm q_h)\|_0^2+(\nabla\cdot(\bm\Pi_{\bm P_h}\bm q-\bm q_h),\omega^2 \eta (\Pi_{V_h}u-u_h))                                                         \\
       & \quad =a(\bm\Phi_h,\Psi_h;\bm\Pi_{\bm P_h}\bm q-\bm q_h,\Pi_{V_h}u-u_h)=a(\bm\Phi_h,\Psi_h;
      \bm\Pi_{\bm P_h}\bm q-\bm q,\Pi_{V_h}u-u)                                                                                                                                                 \\
       & \quad \le Ch^{\varepsilon}(\|\bm\Phi\|_{\varepsilon}+\|\Psi\|_{1+\varepsilon})\|\sigma^{-\frac 1 2}(\bm{\Pi}_{\bm P_h}\bm q-\bm q)\|_0+ C\|(\bm\Phi_h,\Psi_h)\|\cdot\|\Pi_{V_h}u-u\|_0 \\
       & \quad\quad+ Ch^{1+\min(\ell,\varepsilon)}\omega^2\|\Psi\|_{1+\min(\ell,\varepsilon)}\|{\Pi}_{\ell}^o\nabla\cdot\bm q-\nabla\cdot\bm q\|_0                                              \\
       & \quad \le C\left( h^{\varepsilon} \|\sigma^{-\frac 1 2}(\bm{\Pi}_{\bm P_h}\bm q-\bm q)\|_0 +\|\Pi_{V_h}u-u\|_0 \right)	\|\nabla\cdot(\bm\Pi_{\bm P_h}\bm q-\bm q_h)\|_0                \\
       & \quad\quad+ Ch^{1+\min(\ell,\varepsilon)}\omega^2\|\nabla\cdot(\bm\Pi_{\bm P_h}\bm q-\bm q_h)\|_0\|{\Pi}_{\ell}^o(\nabla\cdot\bm q)-\nabla\cdot\bm q\|_0.
    \end{align*}
    The estimate \eqref{div_q} is established by the above result, the triangle inequality, the Cauchy-Schwarz inequality and the estimate of $\|\Pi_{V_h}u-u_h\|_0$ from \eqref{l2-err-u-1}. This completes the proof.
  \end{proof}
}

\begin{remark}[{Optimal and \zzya{Supercloseness} estimates}]\label{rem_su_div_q}
  For $\Omega$ satisfies $\varepsilon=1$, for $\bm{\mathcal{BDM}}_{k}$ (or $\bm{\mathcal{RT}}_{k}$)$/\mathcal P_{k+1}$, a higher order of convergence rate can be obtained than that presented in \Cref{rem_3.13}
  \zzy{\begin{align*}
       & \|\nabla\cdot(\bm\Pi_{\bm P_h}\bm q-\bm q_h)\|_0 \\
       & \quad\le Ch^{k+2}\left\{
      \begin{aligned}
                                                  & ( \|\bm q\|_{k+1}
        +\|\nabla\cdot\bm q\|_{k}+\|u\|_{k+2}),   & \bm{\mathcal{BDM}}_{k}/\mathcal P_{k+1}(k>1), \\
                                                  & ( \|\bm q\|_{k+1}
        +\|\nabla\cdot\bm q\|_{k}+\|u\|_{k+2}),   & \bm{\mathcal{RT}}_{k}/\mathcal P_{k+1}(k>0),  \\
                                                  & ( \|\bm q\|_{k+1}
        +\|\nabla\cdot\bm q\|_{k+1}+\|u\|_{k+2}), & \bm{\mathcal{RT}}_{k}/\mathcal P_{k+1}(k=0).
      \end{aligned}
      \right.
    \end{align*}}
  {For $\bm{\mathcal{BDM}}_{k}/\mathcal P_{k+1}$, the estimate is \zzya{supercloseness} and for $\bm{\mathcal{RT}}_{k}/\mathcal P_{k+1}$, it is optimal.} When we apply the element $\bm{\mathcal{BDM}}_{1}/\mathcal P_{2}$, the different order of \zzya{supercloseness} rates can be obtained with and without the $\omega$:
  \begin{align*}
    \|\nabla\cdot(\bm\Pi_{\bm P_h}\bm q-\bm q_h)\|_0\le C\left\{
    \begin{aligned}
                                            & h^{2} ( \|\bm q\|_{2}
      +\|\nabla\cdot\bm q\|_{1}+\|u\|_{3}), & \omega \neq 0,                                  \\
                                            & h^{3} ( \|\bm q\|_{2}+\|u\|_{3}), & \omega = 0.
    \end{aligned}
    \right.
  \end{align*}
\end{remark}

\subsection{Error \zzy{estimates} for $\mathbf\Pi_{\mathbf P_h}\mathbf q-\mathbf q_h$}\label{dua_q}

Let us first introduce some regularity results which will be used in the dual argument of $\bm\Pi_{\bm P_h}\bm q-\bm q_h$.
\begin{lemma}For any given $\bm\theta\in \bm H({\rm div};\Omega)$, the problem reads:
  find $(\bm t,z)\in \bm H_N({\rm div};\Omega)\times H^1_D(\Omega)$ such that
  \begin{align}\label{dualp1}
    \sigma^{-1}\bm t-\nabla z         =\sigma^{-1}\bm \theta,\quad
    -\nabla\cdot\bm t-\omega^2\eta z  =0    \quad                  \text{in }\Omega.
  \end{align}
  The second equation of \eqref{dualp1} is in the distribution sense between space $H_D^1$ and space $H_D^{-1}$.
  Then there holds the regularity estimates
  \begin{align}\label{reg-dualp1}
    \zzyc{\|\bm t\|_{0} +	\|z\|_{1}\le C\|\bm\theta\|_0,\qquad
    \|z\|_{1+\varepsilon}\le C\|\nabla\cdot\bm\theta\|_0}.
  \end{align}
\end{lemma}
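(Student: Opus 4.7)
The plan is to eliminate $\bm t$ from \eqref{dualp1} and reduce to a scalar elliptic problem for $z$, then invoke the regularity bounds \eqref{preg1} and \eqref{preg3} already available in \Cref{ell_pro}. First I would use the first equation of \eqref{dualp1} to write $\bm t = \bm\theta + \sigma\nabla z$ and substitute into the second equation, obtaining the distributional identity
\begin{align*}
-\nabla\cdot(\sigma\nabla z) - \omega^2\eta z = \nabla\cdot\bm\theta\quad\text{in }\Omega,
\end{align*}
with $z=0$ on $\Gamma_D$ and with $\sigma\bm n\cdot\nabla z = -\bm\theta\cdot\bm n$ on $\Gamma_N$ inherited from $\bm t\cdot\bm n=0$.

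For the first inequality of \eqref{reg-dualp1} I would recast this as a weak problem: testing against $v\in H^1_D(\Omega)$ and using $v=0$ on $\Gamma_D$ together with $\bm t\cdot\bm n=0$ on $\Gamma_N$ yields
\begin{align*}
(\sigma\nabla z,\nabla v) - \omega^2(\eta z,v) = -(\bm\theta,\nabla v),\quad\forall v\in H^1_D(\Omega),
\end{align*}
whose right-hand side is a bounded linear functional on $H^1_D(\Omega)$ of $H^{-1}_D$-norm at most $\|\bm\theta\|_0$. Applying \eqref{preg3} then gives $\|z\|_1\le C\|\bm\theta\|_0$, and the representation $\bm t=\bm\theta+\sigma\nabla z$ combined with the triangle inequality controls $\|\bm t\|_0$ by $C\|\bm\theta\|_0$.

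For the second inequality I would work with the strong formulation. In the intended application of this lemma $\bm\theta$ arises from discrete functions lying in $\bm H_N({\rm div};\Omega)$, so the normal trace $\bm\theta\cdot\bm n$ vanishes on $\Gamma_N$ and the Neumann condition for $z$ reduces to the homogeneous one treated in \eqref{preg1}. Applying \eqref{preg1} to the reduced equation would then give $\|z\|_{1+\varepsilon}\le C\|\nabla\cdot\bm\theta\|_{-1+\varepsilon}$, and the desired bound $\|z\|_{1+\varepsilon}\le C\|\nabla\cdot\bm\theta\|_0$ follows from the continuous embedding $L^2\hookrightarrow H^{-1+\varepsilon}$ valid for $\varepsilon\in(1/2,1]$.

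The main obstacle is the treatment of the Neumann trace: one cannot apply \eqref{preg1} cleanly without the compatibility condition $\bm\theta\cdot\bm n=0$ on $\Gamma_N$, so the second estimate in \eqref{reg-dualp1} hinges on this property of the data chosen in the subsequent dual arguments for $\bm\Pi_{\bm P_h}\bm q-\bm q_h$. Beyond this point the argument is a short assembly of the first-order system, integration by parts, and the already-quoted elliptic regularity, and no further technical difficulty is expected.
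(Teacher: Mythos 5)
Your proposal is correct and follows essentially the same route as the paper: eliminate $\bm t$ to obtain $-\nabla\cdot(\sigma\nabla z)-\omega^2\eta z=\nabla\cdot\bm\theta$, bound $\|z\|_1$ via \eqref{preg3} together with $\|\nabla\cdot\bm\theta\|_{-1,D}\le C\|\bm\theta\|_0$, recover $\|\bm t\|_0$ from $\bm t=\bm\theta+\sigma\nabla z$ by the triangle inequality, and apply \eqref{preg1} for the $H^{1+\varepsilon}$ bound. Your explicit attention to the Neumann trace $\bm\theta\cdot\bm n$ on $\Gamma_N$ is a point the paper's proof passes over silently; it is indeed harmless only because the lemma is applied with $\bm\theta=\bm\Pi_{\bm P_h}\bm q-\bm q_h\in\bm H_N({\rm div};\Omega)$.
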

\begin{proof}
  A direct calculation shows that
  \begin{align*}
    -\nabla\cdot(\sigma\nabla z)-\omega^2\eta z =\nabla\cdot\bm{\theta}.
  \end{align*}
  By the regularity assumptions \eqref{preg3}, \eqref{preg1} and the Poincaré inequality, we obtain
  \zzy{\begin{align*}
      \|z\|_1\le 	C\|\nabla z\|_0\le C\|\nabla \cdot \bm\theta\|_{-1,D}\le C\|\bm\theta\|_0,\
      \|\nabla z\|_{\varepsilon}\le C\|\nabla \cdot \bm \theta\|_{-1+\varepsilon}\le C\|\bm \theta\|_{\varepsilon},
    \end{align*}}
  which, together with \eqref{dualp1} and the triangle inequality, bring us that
  \zzyc{\begin{align*}
    \|\bm t\|_0=\|\sigma\nabla z+\bm\theta\|_0\le C\|\bm\theta\|_0.
  \end{align*}}
  Also, from regularity assumption \eqref{preg1}, it is reached that $\|z\|_{1+\varepsilon}\le C\|\nabla \cdot \bm\theta\|_0$.
  Therefore, the proof is finished.
\end{proof}

\begin{lemma}Let $(\bm t,z)$ be the solution of \eqref{dualp1}, we define the problem:
  find $(\bm \Phi,\Psi)\in \bm H_N({\rm div};\Omega)\times H^1_D(\Omega)$ such that
  \begin{align}\label{dualp2}
    \sigma^{-1}\bm\Phi-\nabla\Psi          =\sigma^{-1}\bm t, \quad
    -\nabla\cdot\bm\Phi-\omega^2\eta \Psi  =-z \quad  \text{in }\Omega.
  \end{align}
  Then the estimates hold
  \begin{align}\label{reg-dualp2}
    \|\Psi\|_{1+\varepsilon}+\|\nabla\cdot\bm\Phi\|_{1}\le C\|\bm\theta\|_0,\qquad
    \zzyc{\|\bm\Phi\|_{0}\le C\|\bm\theta\|_{0}}.
  \end{align}
\end{lemma}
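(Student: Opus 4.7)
The plan is to follow the same elimination strategy used in the proof of \Cref{lem_4.1}: reduce the first-order system \eqref{dualp2} to a single second-order elliptic equation for $\Psi$ with a manageable right-hand side, apply the available regularity theory, and then read off $\bm\Phi$ and $\nabla\cdot\bm\Phi$ algebraically from the system.

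First, I would eliminate $\bm\Phi$ from \eqref{dualp2}. From the first equation, $\bm\Phi = \bm t + \sigma\nabla\Psi$, and substituting into the second yields
\begin{align*}
-\nabla\cdot(\sigma\nabla\Psi) - \omega^2\eta\Psi = -z + \nabla\cdot\bm t.
\end{align*}
The key simplification is that from \eqref{dualp1} we have $\nabla\cdot\bm t = -\omega^2\eta z$, so the right-hand side collapses to $-(1+\omega^2\eta)z$, which is a multiple of $z$ alone. This allows us to avoid using $\nabla\cdot\bm t$ as data (which would require $H^{-1}$ estimates on $\bm\theta$).

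Next, I would apply the regularity estimate \eqref{preg1} to this reduced elliptic problem. Using that $\eta$ is smooth and bounded, plus $\|z\|_{-1+\varepsilon}\leq C\|z\|_1$, we obtain
\begin{align*}
\|\Psi\|_{1+\varepsilon} \le C\|(1+\omega^2\eta)z\|_{-1+\varepsilon} \le C\|z\|_1 \le C\|\bm\theta\|_0,
\end{align*}
where the last step uses the already-established estimate \eqref{reg-dualp1}. For $\|\bm\Phi\|_0$, I would use $\bm\Phi = \bm t + \sigma\nabla\Psi$ and the triangle inequality together with \eqref{reg-dualp1} and the bound just obtained for $\|\Psi\|_{1+\varepsilon}$. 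For $\|\nabla\cdot\bm\Phi\|_1$, the second equation of \eqref{dualp2} gives $\nabla\cdot\bm\Phi = z - \omega^2\eta\Psi$, so the smoothness of $\eta$ yields $\|\nabla\cdot\bm\Phi\|_1 \le \|z\|_1 + C\omega^2\|\Psi\|_1 \le C\|\bm\theta\|_0$.

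No step is genuinely hard here; the one subtlety is making sure the reduced elliptic equation has its right-hand side controlled purely by $\|\bm\theta\|_0$ (and not by $\|\nabla\cdot\bm\theta\|_0$, which would only give the weaker target $\|\Psi\|_{1+\varepsilon}\le C\|\nabla\cdot\bm\theta\|_0$). This is ensured by invoking the second equation of \eqref{dualp1} to rewrite $\nabla\cdot\bm t$ in terms of $z$ before quoting elliptic regularity.
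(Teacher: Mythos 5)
Your proposal is correct and follows essentially the same route as the paper: eliminate $\bm\Phi$, use the second equation of \eqref{dualp1} to replace $\nabla\cdot\bm t$ by $-\omega^2\eta z$ so that $\Psi$ solves $-\nabla\cdot(\sigma\nabla\Psi)-\omega^2\eta\Psi=-(1+\omega^2\eta)z$, apply \eqref{preg1} together with \eqref{reg-dualp1}, and then recover $\bm\Phi$ and $\nabla\cdot\bm\Phi$ algebraically from \eqref{dualp2}. The only cosmetic difference is that the paper bounds the right-hand side by $\|z\|_0$ while you pass through $\|z\|_{-1+\varepsilon}\le C\|z\|_1$; both are controlled by $\|\bm\theta\|_0$.
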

\begin{proof}
  By a direct calculation, it holds
  \begin{align*}
    \nabla\cdot(\sigma\nabla\Psi)+\omega^2\eta \Psi=(\omega^2\eta + 1)z,
  \end{align*}
  which, together with the regularity \eqref{preg1}, leads to $\|\Psi\|_{1+\varepsilon}\le C\|z\|_0\le C\|\bm\theta\|_0$.
  Combining with the regularity \eqref{reg-dualp1} and \eqref{dualp2}, we obtain
  \begin{align*}
    \zzyc{\|\bm\Phi\|_{0}\le C(\|t\|_{0}+\|\Psi\|_{1}) \le C\|\bm\theta\|_{0}} \quad
    \text{and}\quad
    \|\nabla\cdot\bm\Phi\|_{1}\le C\|\bm\theta\|_0,
  \end{align*}
  which finishes the proof.
\end{proof}

We define the discrete problem the corresponding to \eqref{dualp2}: find $(\bm \Phi_h,\Psi_h)\in \bm P_h\times V_h$ such that
\begin{align}\label{dualp3-dis}
  a(\bm \Phi_h,\Psi_h;\bm p_h,v_h)=(t,\sigma^{-1}\bm p_h-\nabla v_h)+(z,\nabla\cdot\bm p_h+\omega^2\eta v_h)
\end{align}
for any $(\bm p_h,v_h)\in \bm P_h\times V_h$.

\begin{theorem}[The $L^2$ error \zzy{estimate} for $\bm q$]\label{the_q}\zzyc{For $(\bm q,u)\in \bm H_N^{s}({\rm div};\Omega)\times H^1_D(\Omega)$ with $s>1/2$ and $(\bm q_h,u_h)\in \bm P_h\times V_h$ be the solution of \eqref{lsorg} and \eqref{lsfem},} then the following estimate holds
  \begin{align}\label{sig_q}
    \begin{split}
       & \|\sigma^{-\frac 1 2}(\bm\Pi_{\bm P_h}\bm q-\bm q_h)\|_0 \\
       & \qquad  \le C
      \left(\|\sigma^{-\frac 1 2}(\bm\Pi_{\bm P_h}\bm q-\bm q)\|_0
      +h^{\varepsilon}\|\Pi_{V_h}u-u\|_{0}
      +\|\Pi_{V_h}u-u\|_{-1,D}\right)
    \end{split}
  \end{align}
  \zzy{whrere $\varepsilon$ is stated in \Cref{ell_pro}.}
\end{theorem}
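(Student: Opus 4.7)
The approach parallels the dual argument used for Theorems 4.5 and 4.8, exploiting the auxiliary dual problems \eqref{dualp1}--\eqref{dualp2} with the free datum chosen as $\bm\theta := \bm\Pi_{\bm P_h}\bm q - \bm q_h \in \bm H_N({\rm div};\Omega)$, together with the discrete counterpart \eqref{dualp3-dis}.

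\emph{Step 1 (Identity for the squared norm).} Take the test pair $(\bm p_h, v_h) = (\bm\Pi_{\bm P_h}\bm q - \bm q_h, \Pi_{V_h}u - u_h)$ in \eqref{dualp3-dis}. Expanding the right-hand side, substitute $\sigma^{-1}\bm t = \sigma^{-1}\bm\theta + \nabla z$ from \eqref{dualp1} and integrate by parts; the boundary contributions vanish thanks to $\Pi_{V_h}u - u_h \in H_D^1(\Omega)$ and $\bm\Pi_{\bm P_h}\bm q - \bm q_h, \bm t$ having vanishing normal trace on $\Gamma_N$. The piece $(\bm t, \sigma^{-1}\bm p_h) + (z, \nabla\cdot\bm p_h)$ collapses exactly to $\|\sigma^{-\frac 1 2}(\bm\Pi_{\bm P_h}\bm q - \bm q_h)\|_0^2$, while the remaining $-(\bm t, \nabla v_h) + \omega^2(z, \eta v_h)$ cancels after a further integration by parts using $\nabla\cdot\bm t = -\omega^2\eta z$. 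This yields
\begin{align*}
\|\sigma^{-\frac 1 2}(\bm\Pi_{\bm P_h}\bm q - \bm q_h)\|_0^2 = a(\bm\Phi_h, \Psi_h; \bm\Pi_{\bm P_h}\bm q - \bm q_h, \Pi_{V_h}u - u_h),
\end{align*}
which, combined with symmetry of $a(\cdot;\cdot)$ and Galerkin orthogonality \eqref{orth}, becomes $a(\bm\Pi_{\bm P_h}\bm q - \bm q, \Pi_{V_h}u - u; \bm\Phi_h, \Psi_h)$.

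\emph{Step 2 (Expansion and easy terms).} Apply the projection identity \eqref{pi-error} to decompose this bilinear form into six terms $E_1,\ldots,E_6$ analogous to those in the proof of Lemma 4.2. The terms carrying $\bm\Pi_{\bm P_h}\bm q - \bm q$ (that is, $E_1, E_2, E_4$) are handled directly by Cauchy--Schwarz combined with the stability bound $\|(\bm\Phi_h, \Psi_h)\| \leq C(\|\bm t\|_0 + \|z\|_0) \leq C\|\bm\Pi_{\bm P_h}\bm q - \bm q_h\|_0$, which follows from Lemma 2.4 and \eqref{reg-dualp1}; the commuting-diagram relation \eqref{lem3.4} handles the $\nabla(\eta\Psi_h)$ contribution exactly as in Lemma 4.2. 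The term $E_6$ is controlled via the negative norm \eqref{neg_norm} applied to $\|\Pi_{V_h}u - u\|_{-1,D}$ together with $\|\Psi_h\|_1 \leq C\|\bm\Pi_{\bm P_h}\bm q - \bm q_h\|_0$.

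\emph{Step 3 (The main obstacle).} The delicate term is $E_3 + E_5 = (1 + \omega^2\eta)(\Pi_{V_h}u - u, \nabla\cdot\bm\Phi_h)$, since the regularity \eqref{reg-dualp2} only provides $\bm\Phi \in \bm L^2(\Omega)$ (no $H^\varepsilon$ smoothness), so a naive $h^\varepsilon$-bound on $\|\bm\Phi - \bm\Phi_h\|_0$ is unavailable. Following the strategy of Lemma 4.4, split
\begin{align*}
(\Pi_{V_h}u - u, \nabla\cdot\bm\Phi_h) = (\Pi_{V_h}u - u, \nabla\cdot(\bm\Phi_h - \bm\Pi_{\bm P_h}\bm\Phi)) + (\Pi_{V_h}u - u, \nabla\cdot(\bm\Pi_{\bm P_h}\bm\Phi - \bm\Phi)) + (\Pi_{V_h}u - u, \nabla\cdot\bm\Phi).
\end{align*}
For the middle piece, use \eqref{lem3.4} and $\|\nabla\cdot\bm\Phi\|_1 \leq C\|\bm\Pi_{\bm P_h}\bm q - \bm q_h\|_0$ from \eqref{reg-dualp2} to obtain an $\mathcal{O}(h)\|\Pi_{V_h}u - u\|_0\|\bm\Pi_{\bm P_h}\bm q - \bm q_h\|_0$ bound. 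For the last piece, replace $\nabla\cdot\bm\Phi$ by $-\omega^2\eta\Psi + z$ via \eqref{dualp2} and invoke \eqref{neg_norm} with $\|\Psi\|_1, \|\nabla z\|_0 \leq C\|\bm\Pi_{\bm P_h}\bm q - \bm q_h\|_0$. For the first piece, introduce a secondary dual problem with datum $\Pi_{V_h}u - u$ (mirroring \eqref{dual_r_w}) and its discrete counterpart; the same manipulation as in Lemma 4.4, combined with \eqref{reg-dualp2}, \eqref{preg1}, \eqref{preg3}, gives a bound of the form $C\|\bm\Pi_{\bm P_h}\bm q - \bm q_h\|_0(h^\varepsilon\|\Pi_{V_h}u - u\|_0 + \|\Pi_{V_h}u - u\|_{-1,D})$. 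Assembling the six term estimates and cancelling one factor of $\|\sigma^{-\frac 1 2}(\bm\Pi_{\bm P_h}\bm q - \bm q_h)\|_0$ produces \eqref{sig_q}. The principal difficulty, and the only place novel technique is required, is the secondary dual argument in Step 3.
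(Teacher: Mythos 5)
Your proposal is correct and follows essentially the same route as the paper: the same dual pair \eqref{dualp1}--\eqref{dualp2}, the same identity $\|\sigma^{-\frac 1 2}(\bm\Pi_{\bm P_h}\bm q-\bm q_h)\|_0^2=a(\bm\Phi_h,\Psi_h;\bm\Pi_{\bm P_h}\bm q-\bm q,\Pi_{V_h}u-u)$, and --- crucially --- the same recognition that $(\Pi_{V_h}u-u,\nabla\cdot\bm\Phi_h)$ must be handled by the secondary dual argument using only the $L^2$ bound on $\bm\Phi$, which is exactly the content of estimate \eqref{estp-key2} in \Cref{lem4.4} that the paper cites rather than re-derives inline. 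The one cosmetic difference is your optional detour through \eqref{lem3.4} for the $\nabla(\eta\Psi_h)$ term, which is unnecessary here (direct Cauchy--Schwarz with $\|\nabla(\eta\Psi_h)\|_0\le C\|(\bm\Phi_h,\Psi_h)\|$ suffices, as the paper does) and would otherwise introduce an extra $h\|{\Pi}_{\ell}^o\nabla\cdot\bm q-\nabla\cdot\bm q\|_0$ contribution not present in \eqref{sig_q}.
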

\begin{proof}
    We take $\bm\theta=\bm{\Pi}_{\bm P_h}\bm q-\bm q_h$ in \eqref{dualp1}, and use the \eqref{dualp1}, \eqref{dualp2}, the orthogonality \eqref{ell_p_1}, the integration by parts and \eqref{dualp3-dis} to get
    \begin{align*}
       & \|\sigma^{-\frac 1 2}(\bm{\Pi}_{\bm P_h}\bm q-\bm q_h)\|_0^2                                                                                                    \\
       & \quad=(\sigma^{-1}\bm t-\nabla z,\bm{\Pi}_{\bm P_h}\bm q-\bm q_h)=(\sigma^{-1}\bm t,\bm{\Pi}_{\bm P_h}\bm q-\bm q_h)-(\nabla z,\bm{\Pi}_{\bm P_h}\bm q-\bm q_h) \\
       & \quad =(\sigma^{-1}\bm t,\bm{\Pi}_{\bm P_h}\bm q-\bm q_h-\sigma\nabla(\Pi_{V_h} u- u_h))                                                                        \\
       & \quad\quad+( z,\nabla\cdot(\bm{\Pi}_{\bm P_h}\bm q-\bm q_h)+\omega^2\eta(\Pi_{V_h} u- u_h ))                                                                    \\
       & \quad=a(\bm\Phi_h,\Psi_h;\bm\Pi_{\bm P_h}\bm q-\bm q_h,\Pi_{V_h}u-u_h)=a(\bm\Phi_h,\Psi_h;\bm\Pi_{\bm P_h}\bm q-\bm q,\Pi_{V_h}u-u).
    \end{align*}
    We recall the stability \eqref{sta} and the regularity \eqref{reg-dualp1} to get
    \begin{align}\label{the6.3_2}
      \|(\bm\Phi_h,\Psi_h)\|\le C(\|\sigma^{-1}\bm t\|_0+\|z\|_0)\le C\|\bm\theta\|_0\le C\|\sigma^{-\frac 1 2}(\bm\Pi_{\bm P_h}\bm q-\bm q_h)\|_0.
    \end{align}
    From the estimate of $(\Pi_{V_h}u-u ,\nabla\cdot\bm \Phi_h)$ in \zzyc{\eqref{estp-key2}}, the Poincaré inequality, \eqref{reg-dualp1}, \eqref{reg-dualp2}, \eqref{the6.3_2} and \eqref{pi-error}, we arrive that
    \begin{align}\label{the6.3_1}
      \begin{split}
         & a(\bm\Phi_h,\Psi_h;\bm\Pi_{\bm P_h}\bm q-\bm q,\Pi_{V_h}u-u) =a(\bm{\Pi}_{\bm P_h}\bm q-\bm q,\Pi_{V_h}u-u;\bm\Phi_h,\Psi_h) \\
         & \quad =(\sigma^{-1}(\bm{\Pi}_{\bm P_h}\bm q-\bm q ),\bm \Phi_h)
        -(\bm{\Pi}_{\bm P_h}\bm q-\bm q,\nabla \Psi_h)
        +(\Pi_{V_h}u-u ,\nabla\cdot\bm \Phi_h)                                                                                          \\
         & \quad\quad
        -\omega^2(\bm{\Pi}_{\bm P_h}\bm q-\bm q ,\nabla(\eta \Psi_h) )
        +\omega^2(\eta(\Pi_{V_h}u-u),\nabla\cdot\bm \Phi_h)                                                                             \\
         & \quad\quad+\omega^4(\eta^2(\Pi_{V_h}u-u),\Psi_h)                                                                             \\
         & \quad\le C
        \|\sigma^{-\frac 1 2}(\bm\Pi_{\bm P_h}\bm q-\bm q)\|_0
        \|(\bm \Phi_h, \Psi_h)\|+C\omega^4\|\Pi_{V_h}u-u\|_{-1,D}\|\nabla \Psi_h\|_0
        \\
         & \quad\quad +C|(\Pi_{V_h}u-u,\nabla\cdot\bm \Phi_h)|                                                                          \\
         & \quad\le  C
        (\|\sigma^{-\frac 1 2}(\bm\Pi_{\bm P_h}\bm q-\bm q)\|_0+\|\Pi_{V_h}u-u\|_{-1,D})
        \|\sigma^{-\frac 1 2}(\bm\Pi_{\bm P_h}\bm q-\zzyc{\bm q_h})\|_0                                                                          \\
         & \quad\quad+ \zzyc{C 
        \|\sigma^{-\frac 1 2}(\bm\Pi_{\bm P_h}\bm q-\bm q_h)\|_{0}}
        (h^{\varepsilon}\|\Pi_{V_h}u-u\|_{0}+\|\Pi_{V_h}u-u\|_{-1,D})
        \\
         & \quad \quad
        +Ch\|\Pi_{V_h}u-u\|_0\|\sigma^{-\frac 1 2}(\bm\Pi_{\bm P_h}\bm q-\zzyc{\bm q_h})\|_0.
      \end{split}
    \end{align}

    The estimate \eqref{sig_q} follows from \eqref{the6.3_1} and \eqref{the6.3_2}.
  \end{proof}

\begin{remark}[Suboptimal convergence estimate]
  For $\Omega$ satisfies $\beta=1$ and $\varepsilon=1$, and for $\bm{\mathcal{BDM}}_{2}/\mathcal P_{1}$ and $\bm{\mathcal{RT}}_{2}/\mathcal P_{1}$, it holds
  \begin{align*}
    \|\sigma^{-\frac 1 2}(\bm\Pi_{\bm P_h}\bm q-\bm q_h)\|_0\le Ch^{2}
    (\|\bm q\|_{\zzy{2}}+ \|u\|_{2}).
  \end{align*}
\end{remark}
\begin{remark}[Optimal convergence estimates]For $\Omega$ satisfies $\beta=1$ and $\varepsilon=1$, and for the element $\bm{\mathcal{RT}}_{0}/\mathcal P_{1}$, it holds
  \begin{align*}
    \|\sigma^{-\frac 1 2}(\bm\Pi_{\bm P_h}\bm q-\bm q_h)\|_0\le C h(\|\bm q\|_1+\|u\|_{\zzy{1}}),
  \end{align*}
  and for the elements $\bm{\mathcal{BDM}}_{k}(\text{or }\bm{\mathcal{RT}}_{k})/\mathcal P_{k+1}$, the optimal convergence estimates hold
  \begin{align*}
    \|\sigma^{-\frac 1 2}(\bm\Pi_{\bm P_h}\bm q-\bm q_h)\|_0\le C h^{k+1}(\|\bm q\|_{k+1}+\|u\|_{\zzy{k+1}}).
  \end{align*}
  As shown in \Cref{rem_3.5}, for $\Omega$ satisfies $\beta=2$ and $\varepsilon=1$, and for the elements $\bm{\mathcal{BDM}}_{k}/\mathcal P_{k-1}$ and $\bm{\mathcal{RT}}_{k}/\mathcal P_{k-1}$ with $k > 2$, it holds
  \begin{align*}
    \|\sigma^{-\frac 1 2}(\bm\Pi_{\bm P_h}\bm q-\bm q_h)\|_0\le C h^{k+1}(\|\bm q\|_{k+1}+\|u\|_{k}).
  \end{align*}
\end{remark}

\subsection{Error \zzy{estimates} for $\nabla(\Pi_{V_h}u-u_h)$}\label{dua_grad_u}

For the above $z\in H^1_D(\Omega)$, we define the problem: find $(\bm \Phi,\Psi)\in \bm H_N({\rm div};\Omega)\times H^1_D(\Omega)$ such that
\begin{align}\label{dualgu2}
  \sigma^{-1}\bm\Phi-\nabla\Psi          =-\nabla z, \quad
  -\nabla\cdot\bm\Phi-\omega^2\eta \Psi  =z     \quad      \text{in }\Omega.
\end{align}

We define the discrete problem the corresponding to \eqref{dualgu2}: find $(\bm \Phi_h,\Psi_h)\in \bm P_h\times V_h$ such that
\begin{align}
  a(\bm \Phi_h,\Psi_h;\bm p_h,v_h)=(-\sigma\nabla z,\sigma^{-1}\bm p_h-\nabla v_h)-(z,\nabla\cdot\bm p_h+\omega^2\eta v_h)\label{dual_grad_u_1}
\end{align}
holds for all $(\bm p_h,v_h)\in \bm P_h\times V_h$.
Then it holds
\begin{align*}
  a(\bm \Phi_h,\Psi_h;\bm p_h,v_h)=(\sigma\nabla z,\nabla v_h)-(z,\omega^2\eta v_h).
\end{align*}

Next, we can obtain the following regularity \zzy{estimates}.
\begin{lemma} Let $(\bm \Phi,\Psi)\in \bm H_N({\rm div};\Omega)\times H^1_D(\Omega)$ be the solution of  \eqref{dualgu2}, then
  \begin{align}
    \|\bm \Phi\|_{\varepsilon}\le C\|z\|_0,   \qquad   \|\Psi\|_{1}\le C\|\nabla z\|_0.\label{7reg2}
  \end{align}
\end{lemma}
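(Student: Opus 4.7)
The plan is to reduce the system \eqref{dualgu2} to a single standard elliptic equation by absorbing the nonhomogeneous term $-\nabla z$ into a new unknown. Setting $w := \Psi - z \in H_D^1(\Omega)$ and rewriting the first equation of \eqref{dualgu2} as $\bm\Phi = \sigma\nabla\Psi - \sigma\nabla z = \sigma\nabla w$, one sees that $\bm\Phi = \sigma\nabla w$ and that the boundary condition $\bm\Phi\cdot\bm n = 0$ on $\Gamma_N$ becomes $\sigma\nabla w\cdot\bm n = 0$ on $\Gamma_N$. Substituting $\Psi = w+z$ into the second equation of \eqref{dualgu2} then yields
\[
-\nabla\cdot(\sigma\nabla w) - \omega^2\eta\, w \;=\; (1+\omega^2\eta)\,z \quad\text{in }\Omega,
\]
which is precisely the elliptic problem \eqref{elliptic} with the $L^2$ source $\theta := (1+\omega^2\eta)z$.

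With this reduction, the first estimate falls out at once from the $H^{1+\varepsilon}$-regularity \eqref{preg1}: since $\|\theta\|_{-1+\varepsilon} \le C\|z\|_{-1+\varepsilon} \le C\|z\|_0$ for $\varepsilon \le 1$, one obtains $\|w\|_{1+\varepsilon} \le C\|z\|_0$, and hence $\|\bm\Phi\|_\varepsilon = \|\sigma\nabla w\|_\varepsilon \le C\|w\|_{1+\varepsilon} \le C\|z\|_0$ by the smoothness of $\sigma$. The second estimate follows from the same PDE via the chain $\|w\|_1 \le \|w\|_{1+\varepsilon} \le C\|z\|_0 \le C\|\nabla z\|_0$ (the last step being Poincar\'e, valid because $z \in H_D^1(\Omega)$); combined with $\|z\|_1 \le C\|\nabla z\|_0$ and the triangle inequality, this gives $\|\Psi\|_1 \le \|w\|_1 + \|z\|_1 \le C\|\nabla z\|_0$.

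The only delicate point, and the reason for the change of variable, is that $z$ is assumed only in $H_D^1(\Omega)$: if one wrote the elliptic equation directly for $\Psi$, its right-hand side would be the distribution $z - \nabla\cdot(\sigma\nabla z)$, which lies in $H^{-1}_D$ but not in any $H^{-1+\varepsilon}$, and then \eqref{preg1} could not be invoked to gain the fractional smoothness $\varepsilon$ that the bound on $\|\bm\Phi\|_\varepsilon$ demands. Passing to $w = \Psi - z$ cancels the problematic $\nabla\cdot(\sigma\nabla z)$ term exactly and replaces it by an $L^2$ datum. Beyond spotting this substitution the argument is routine, so I do not anticipate any further obstacle.
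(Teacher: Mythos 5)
Your proposal is correct and follows essentially the same route as the paper: both introduce $w=\Psi-z$, observe $\bm\Phi=\sigma\nabla w$, derive $-\nabla\cdot(\sigma\nabla w)-\omega^2\eta w=(1+\omega^2\eta)z$, and then apply the regularity bound \eqref{preg1} together with the Poincar\'e inequality and the triangle inequality. Your closing remark explaining why the substitution is needed (to avoid an $H^{-1}$ right-hand side for $\Psi$ itself) is a correct reading of the point of the argument, but the proof itself is the paper's.
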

\begin{proof}
  A direct calculation shows that
  \begin{align*}
    -\nabla\cdot(\sigma\nabla (\Psi-z))-\omega^2\eta(\Psi-z)  =\omega^2\eta z+z.
  \end{align*}
  Therefore, using the regularity assumption \eqref{preg1} and Poincaré inequality yields
  \begin{align*}
    \|\Psi-z\|_{1+\varepsilon}\le C\|z\|_0,\qquad	\|\Psi-z\|_1\le C\|z\|_0\le C\|\nabla z\|_0.
  \end{align*}
  According to \eqref{dualgu2}, there holds
  \begin{align*}
    \|\Phi\|_{\varepsilon}=\|\sigma(\nabla (\Psi - z))\|_{\varepsilon}\le C\|\Psi-z\|_{1+\varepsilon},
  \end{align*}
  which leads to \eqref{7reg2}.
\end{proof}

For estimating $\|\sigma^{\frac 1 2}\nabla(\Pi_{V_h}u-u_h)\|_0$, the required regularity is different from all previous estimates. \zzy{It should be noted} that for this term, only $\Psi\in H^{1}(\Omega)$ holds in \eqref{7reg2}. For $|(\Pi_{V_h}u-u,\nabla\cdot\bm\Phi_h)|$, \zzy{a different estimate is derived.}

\begin{lemma}
  For the above discrete problem \eqref{dual_grad_u_1}, the following estimate holds
  \begin{align}
    \begin{split}\label{estu-key}
       & |(\Pi_{V_h}u-u,\nabla\cdot\bm\Phi_h)|\le
      Ch^{\varepsilon}\|\Pi_{V_h}u-u\|_{0}(
      \|\bm\Phi\|_{\varepsilon}+
      \|\Psi\|_{1}
      )
      \\
       & \qquad
      +Ch\|\Pi_{V_h}u-u\|_0
      \|\nabla\cdot\bm \Phi\|_1
      +
      C\|\Pi_{V_h}u-u\|_{-1,D}(\omega^2\|\nabla\Psi\|_0+\|\nabla z\|_0).
    \end{split}
  \end{align}
\end{lemma}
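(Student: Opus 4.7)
The plan is to mirror the dual-argument strategy already established in \Cref{lem4.4} for the analogous quantity, with the principal adaptation being that the dual solution $\Psi$ arising from \eqref{dualgu2} enjoys only $H^1$ regularity in the sense of \eqref{7reg2}. First I would substitute the second equation of \eqref{dualgu2}, namely $\nabla\cdot\bm\Phi=-z-\omega^2\eta\Psi$, and split
\begin{align*}
(\Pi_{V_h}u-u,\nabla\cdot\bm\Phi_h)
&=(\Pi_{V_h}u-u,\nabla\cdot(\bm\Phi_h-\bm\Pi_{\bm P_h}\bm\Phi))\\
&\quad+(\Pi_{V_h}u-u,\nabla\cdot(\bm\Pi_{\bm P_h}\bm\Phi-\bm\Phi))-(\Pi_{V_h}u-u,z+\omega^2\eta\Psi).
\end{align*}
The third piece is immediately controlled by \eqref{neg_norm} and the smoothness of $\eta$, contributing $C\|\Pi_{V_h}u-u\|_{-1,D}(\|\nabla z\|_0+\omega^2\|\nabla\Psi\|_0)$. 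The middle piece is handled by the commuting property \eqref{lem3.4}, which gives $\|\nabla\cdot(\bm\Pi_{\bm P_h}\bm\Phi-\bm\Phi)\|_0=\|\Pi_\ell^o(\nabla\cdot\bm\Phi)-\nabla\cdot\bm\Phi\|_0\le Ch\|\nabla\cdot\bm\Phi\|_1$ via \eqref{err1}, so a direct Cauchy--Schwarz produces the $Ch\|\Pi_{V_h}u-u\|_0\|\nabla\cdot\bm\Phi\|_1$ contribution.

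The heart of the proof is the first piece. Following \Cref{lem4.4} verbatim, I would introduce the auxiliary dual problem \eqref{dual_r_w} (whose right-hand side is $-(\Pi_{V_h}u-u)$) with LSFEM approximation $(\bm r_h,w_h)\in\bm P_h\times V_h$, and test its discrete variational formulation against $(\bm\Phi_h-\bm\Pi_{\bm P_h}\bm\Phi,\Psi_h-\Pi_{V_h}\Psi)\in\bm P_h\times V_h$. The symmetry of $a(\cdot;\cdot)$, the orthogonality $a(\bm\Phi-\bm\Phi_h,\Psi-\Psi_h;\bm r_h,w_h)=0$, and the identity \eqref{pi-error} (applied with $\bm q=\bm\Phi$, $u=\Psi$) then rewrite the target as a sum of Cauchy--Schwarz pairings involving the projection errors $\bm\Pi_{\bm P_h}\bm\Phi-\bm\Phi$, $\Pi_{V_h}\Psi-\Psi$ and $\nabla(\Pi_{V_h}\Psi-\Psi)$ paired with $(\bm r_h,\nabla w_h,w_h)$. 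The spurious mass term $(\Pi_{V_h}u-u,\omega^2\eta(\Psi_h-\Pi_{V_h}\Psi))$ that appears from the right-hand side of \eqref{dual_r_w} is absorbed via \eqref{neg_norm}.

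The main obstacle is that $\|\nabla(\Pi_{V_h}\Psi-\Psi)\|_0$ admits no $h$-factor: the best available bound from \Cref{approximation-elliptic} under only $\Psi\in H^1$ is $C\|\Psi\|_1$. The saving device is the stability estimate
\[
\|\bm r_h\|_0+\|\nabla w_h\|_0\le Ch^\varepsilon\|\Pi_{V_h}u-u\|_0+C\|\Pi_{V_h}u-u\|_{-1,D},
\]
obtained from the triangle inequality, the LSFEM stability \eqref{sta}, and the two regularity bounds in \eqref{reg_w_1}. Multiplying the $h$-free factor $\|\Psi\|_1$ against this stability bound feeds cleanly into the target: it yields $Ch^\varepsilon\|\Psi\|_1\|\Pi_{V_h}u-u\|_0$ that matches the first summand of \eqref{estu-key}, together with $C\|\Psi\|_1\|\Pi_{V_h}u-u\|_{-1,D}$, which is absorbed into the $\|\nabla z\|_0$ part of the $\|\cdot\|_{-1,D}$ summand using $\|\Psi\|_1\le C\|\nabla z\|_0$ from \eqref{7reg2}. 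The remaining pairings involving $\bm\Phi$ use the standard $\|\bm\Pi_{\bm P_h}\bm\Phi-\bm\Phi\|_0\le Ch^\varepsilon\|\bm\Phi\|_\varepsilon$ and fit into the $h^\varepsilon\|\bm\Phi\|_\varepsilon\|\Pi_{V_h}u-u\|_0$ term, completing the claimed bound.
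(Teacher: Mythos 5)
Your proposal is correct, and its outer decomposition of $\nabla\cdot\bm\Phi_h$ into $\nabla\cdot(\bm\Phi_h-\bm\Pi_{\bm P_h}\bm\Phi)+\nabla\cdot(\bm\Pi_{\bm P_h}\bm\Phi-\bm\Phi)-z-\omega^2\eta\Psi$, together with the treatment of the last two pieces via \eqref{neg_norm} and via the commuting property plus \eqref{err1}, coincides exactly with the paper's proof. Where you genuinely diverge is the first piece: the paper bounds $(\Pi_{V_h}u-u,\nabla\cdot(\bm\Phi_h-\bm\Pi_{\bm P_h}\bm\Phi))$ by a plain Cauchy--Schwarz in $L^2$ combined with the energy estimate \eqref{es11}, which already delivers $Ch^{\varepsilon}\|\Pi_{V_h}u-u\|_0(\|\bm\Phi\|_{\varepsilon}+\|\Psi\|_1)$ in one line, whereas you re-run the full machinery of \Cref{lem4.4} (auxiliary dual problem \eqref{dual_r_w}, Galerkin orthogonality, the identity \eqref{pi-error}, and the stability bound on $(\bm r_h,w_h)$). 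Your route does close: the $h$-free factor $\|\nabla(\Pi_{V_h}\Psi-\Psi)\|_0\le C\|\Psi\|_1$ is indeed rescued by the $h^{\varepsilon}\|\Pi_{V_h}u-u\|_0+\|\Pi_{V_h}u-u\|_{-1,D}$ stability of the auxiliary discrete solution, the leftover $\|\Psi\|_1\|\Pi_{V_h}u-u\|_{-1,D}$ term is absorbed via $\|\Psi\|_1\le C\|\nabla z\|_0$ from \eqref{7reg2}, and the spurious mass term is controlled by \eqref{neg_norm} and \eqref{es11}. But the detour buys nothing here: the whole point of the duality trick in \Cref{lem4.4} is to replace $\|\Pi_{V_h}u-u\|_0$ by the weaker combination $h^{\varepsilon}\|\Pi_{V_h}u-u\|_0+\|\Pi_{V_h}u-u\|_{-1,D}$, and the target \eqref{estu-key} asks for no such gain on its first term --- it pairs the first piece with the plain $L^2$ norm and a single factor $h^{\varepsilon}$, which \eqref{es11} already provides. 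The $H^1$-only regularity of $\Psi$, which you correctly identify as the obstacle in your route, is a complication you create for yourself; it never enters the direct argument. (As a minor upside, your version gives a marginally sharper first term with the negative norm in place of part of the $L^2$ norm, but the lemma does not need it.)
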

\begin{proof}
  We find from the estimate \eqref{es11} that
  \begin{align*}
    \|\nabla\cdot(\bm{\Phi}_h-\bm\Pi_{\bm P_h}\bm\Phi)\|_0\le Ch^{\varepsilon}(\|\bm\Phi\|_{\varepsilon}+\|\Psi\|_1).
  \end{align*}
  Based on \eqref{dualgu2}, \eqref{neg_norm}, the triangle inequality and the above estimate, we conclude that
  \begin{align*}
     & |(\Pi_{V_h}u-u,\nabla\cdot\bm \Phi_h)|
    =|(\Pi_{V_h}u-u,\nabla\cdot(\bm{\Phi}_h-\bm\Phi)-\omega^2\eta\Psi-z)|                    \\
     & \qquad \le |(\Pi_{V_h}u-u,\nabla\cdot(\bm{\Phi}_h-\bm\Phi))|
    +|(\Pi_{V_h}u-u,\omega^2\eta \Psi + z)|                                                  \\
     & \qquad \le Ch^{\varepsilon}\|\Pi_{V_h}u-u\|_0(\|\bm\Phi\|_{\varepsilon}+\|\Psi\|_{1})
    +Ch\|\nabla\cdot\bm\Phi\|_1\|\Pi_{V_h}u-u\|_0                                            \\
     & \qquad \quad+
    C\|\Pi_{V_h}u-u\|_{-1,D}(\omega^2\|\nabla\Psi\|_0+\|\nabla z\|_0),
  \end{align*}
  which completes our proof.
\end{proof}

\begin{theorem}[The $L^2$ error \zzy{estimate} for $\nabla u$]\label{the_grad_u} {For $(\bm q,u)\in \bm H_N^{s}({\rm div};\Omega)$\\$\times H^1_D(\Omega)$ with $s>1/2$ and $(\bm q_h,u_h)\in \bm P_h\times V_h$ be the solution of \eqref{lsorg} and \eqref{lsfem}, respectively, there hold}
    \begin{align}\label{the7.3_1}
      \begin{split}
        \|\sigma^{\frac 1 2}\nabla(\Pi_{V_h}u-u_h)\|_0
         & \le C h^{\varepsilon} \left(
        \|\sigma^{-\frac 1 2}(\bm\Pi_{\bm P_h}\bm q-\bm q)\|_0
        +
        \|\Pi_{\ell}^o(\nabla\cdot\bm q)-\nabla\cdot\bm q\|_0\right)                            \\
         & \quad+C ( (h^{\varepsilon}+\omega^2) \|\Pi_{V_h}u-u\|_{0} +\|\Pi_{V_h}u-u\|_{-1,D}),
      \end{split}
    \end{align}
    and when $k\ge 1$,
  \begin{align}\label{the7.3_2}
    \begin{split}
       & \|\sigma^{\frac 1 2}\nabla(\Pi_{V_h}u-u_h)\|_0 \le C ( h^{\varepsilon}\|\Pi_{V_h}u-u\|_{0}  +\|\Pi_{V_h}u-u\|_{-1,D}) \\
       & \quad\quad+C (h^{\varepsilon}+\omega^2 h^{\beta+\varepsilon-1}) \left(
      \|\sigma^{-\frac 1 2}(\bm\Pi_{\bm P_h}\bm q-\bm q)\|_0
      +
      \|\Pi_{\ell}^o(\nabla\cdot\bm q)-\nabla\cdot\bm q\|_0\right),
    \end{split}
  \end{align}
  \zzy{whrere $\ell$ is stated in \Cref{lemma-commu} and $\varepsilon,\beta$ are stated in \Cref{ell_pro}.}
\end{theorem}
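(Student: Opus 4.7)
\medskip

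\noindent\textbf{Proof proposal for \Cref{the_grad_u}.}
The plan is to run a dual argument analogous to \Cref{the_u}, but with the dual problem \eqref{dualgu2} in which the ``data'' is chosen to isolate the gradient of $\Pi_{V_h}u-u_h$. Specifically, I would take $z=\Pi_{V_h}u-u_h$. Then the regularity bounds \eqref{7reg2} read $\|\bm\Phi\|_\varepsilon\le C\|\Pi_{V_h}u-u_h\|_0$ and $\|\Psi\|_1\le C\|\nabla(\Pi_{V_h}u-u_h)\|_0$, and the discrete dual \eqref{dual_grad_u_1} gives, upon testing with $\bm p_h=\bm\Pi_{\bm P_h}\bm q-\bm q_h$ and $v_h=\Pi_{V_h}u-u_h$,
\begin{align*}
\|\sigma^{\frac12}\nabla(\Pi_{V_h}u-u_h)\|_0^2
&= a(\bm\Phi_h,\Psi_h;\bm\Pi_{\bm P_h}\bm q-\bm q_h,\Pi_{V_h}u-u_h)\\
&\quad +\omega^2\bigl(\eta(\Pi_{V_h}u-u_h),\Pi_{V_h}u-u_h\bigr).
\end{align*}
The Galerkin orthogonality \eqref{orth} (and the symmetry of $a$) then converts the main term into $a(\bm\Phi_h,\Psi_h;\bm\Pi_{\bm P_h}\bm q-\bm q,\Pi_{V_h}u-u)$, to which I can apply the expansion \eqref{pi-error} term by term, exactly as in the proofs of \Cref{lemma_a1} and \Cref{lemma_a2}.

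Once the expansion is written, the six resulting terms are handled as follows. The $(\sigma^{-1}(\bm\Pi_{\bm P_h}\bm q-\bm q),\bm\Phi_h)$ and $(\bm\Pi_{\bm P_h}\bm q-\bm q,\nabla\Psi_h)$ pieces are controlled by Cauchy--Schwarz together with the stability \eqref{sta} and the regularity of $(\bm\Phi,\Psi)$; the term $(\Pi_{V_h}u-u,\nabla\cdot\bm\Phi_h)$ is exactly what \eqref{estu-key} bounds. The $\omega^4$ term is absorbed via the negative-norm bound \eqref{neg_norm} after writing $\Psi_h=\Psi+(\Psi_h-\Psi)$ and using \eqref{es11}. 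The only subtle term is
\[
E_4:=-\omega^2(\bm\Pi_{\bm P_h}\bm q-\bm q,\nabla(\eta\Psi_h)),
\]
for which I would integrate by parts to rewrite it as $\omega^2(\nabla\cdot(\bm\Pi_{\bm P_h}\bm q-\bm q),\eta\Psi_h)$, then insert $\eta(\Psi_h-\Pi_{V_h}\Psi)+(\eta\Pi_{V_h}\Psi-\Pi_\ell^o(\eta\Pi_{V_h}\Psi))$ and use the $L^2$-orthogonality of $\nabla\cdot(\bm\Pi_{\bm P_h}\bm q-\bm q)=\Pi_\ell^o\nabla\cdot\bm q-\nabla\cdot\bm q$ against $\mathcal P_\ell$ (via \Cref{lemma-commu}). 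Collecting these bounds and dividing by $\|\sigma^{\frac12}\nabla(\Pi_{V_h}u-u_h)\|_0$ yields \eqref{the7.3_1}; the residual $\omega^2\|\Pi_{V_h}u-u\|_0$ term arises from the $\omega^2(\eta(\Pi_{V_h}u-u_h),\Pi_{V_h}u-u_h)$ correction, handled by Poincar\'e and the $L^2$ estimate \eqref{l2-err-u-1}.

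For the sharper bound \eqref{the7.3_2} valid when $k\ge1$, I would mimic the refinement of \Cref{lemma_a2}: use the stronger convergence \eqref{dis_4_ineq3} (now for the present dual pair) to gain an $h^{\min(\beta+\varepsilon-1,2\varepsilon)}$ factor on the $\bm\Phi_h$-differences, and replace the coarse treatment of $E_4$ by the two-step decomposition
\[
E_4=\omega^2(\nabla\cdot(\bm\Pi_{\bm P_h}\bm q-\bm q),\eta(\Psi_h-\Psi)+(\eta\Psi-\Pi_\ell^o(\eta\Psi))),
\]
so that the factor $\omega^2 h^{\beta+\varepsilon-1}$ multiplying $\|\sigma^{-1/2}(\bm\Pi_{\bm P_h}\bm q-\bm q)\|_0+\|\Pi_\ell^o\nabla\cdot\bm q-\nabla\cdot\bm q\|_0$ appears naturally. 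The $L^2$ correction from $\omega^2(\eta(\Pi_{V_h}u-u_h),\Pi_{V_h}u-u_h)$ is again absorbed through \eqref{l2-err-u-2}.

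The step I expect to be the main obstacle is $E_4$: because here $\Psi$ is only in $H^1$ (not $H^{1+\varepsilon}$, unlike in the proofs of \Cref{the_u} and \Cref{the_div_q}), we cannot buy an extra $h^\varepsilon$ directly on $\|\Psi_h-\Psi\|_0$ through the standard duality. Keeping track of which pieces of $E_4$ cost one power of $h$ (from the $L^2$-projection in $\mathcal P_\ell$) and which cost $h^{\beta+\varepsilon-1}$ (from the $\bm H(\mathrm{div})$ approximation to $\bm\Phi$) is precisely what separates the two estimates \eqref{the7.3_1} and \eqref{the7.3_2}, and is the place where the argument differs most significantly from those in \Cref{dua_u,dua_div_q,dua_q}.
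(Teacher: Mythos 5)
Your proof of \eqref{the7.3_1} follows the paper's argument essentially verbatim: the same dual pair \eqref{dualgu2}--\eqref{dual_grad_u_1} with $z=\Pi_{V_h}u-u_h$, the same identity producing the correction term $\omega^2(\eta(\Pi_{V_h}u-u_h),\Pi_{V_h}u-u_h)$, the same expansion via \eqref{pi-error}, and the same use of \eqref{estu-key} for the $(\Pi_{V_h}u-u,\nabla\cdot\bm\Phi_h)$ contribution. For $E_4$ (the paper's $R_4$) your decomposition through $\Pi_{V_h}\Psi$ works but is more than is needed: the paper simply writes $R_4=\omega^2(\nabla\cdot(\bm\Pi_{\bm P_h}\bm q-\bm q),\eta\Psi_h-\Pi_{\ell}^o(\eta\Psi_h))$ and applies the elementwise $L^2$-projection estimate to the \emph{discrete} function $\eta\Psi_h$, together with the stability bound $\|\nabla\Psi_h\|_0\le C\|\nabla(\Pi_{V_h}u-u_h)\|_0$; this costs exactly one power of $h$ and requires no regularity of $\Psi$ beyond $H^1$, so the obstacle you flag does not actually arise.

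Where you diverge from the paper, and where your plan has a gap, is \eqref{the7.3_2}. You attribute the factor $\omega^2h^{\beta+\varepsilon-1}$ to a refined treatment of $E_4$ based on an analogue of \eqref{dis_4_ineq3} for the present dual pair. That analogue is not available as stated: it would require $\|\bm\Phi\|_{\beta}$ to be controlled by the data of \eqref{dualgu2}, whereas the regularity lemma for this dual problem only gives $\|\bm\Phi\|_{\varepsilon}\le C\|z\|_0$ and $\|\Psi\|_1\le C\|\nabla z\|_0$ (you yourself note that $\Psi$ is only in $H^1$ here). One could prove $\|\bm\Phi\|_{\beta}\le C\|z\|_1$ by repeating the manipulation in \Cref{lem_4.1}, but the paper neither does so nor needs to: in the paper the bounds on $R_1,\dots,R_6$ are \emph{identical} for both \eqref{the7.3_1} and \eqref{the7.3_2}, and the only difference between the two estimates is which bound on $\|\Pi_{V_h}u-u_h\|_0$ --- \eqref{l2-err-u-1} versus \eqref{l2-err-u-2} --- is substituted when absorbing the correction term $\omega^2\|\Pi_{V_h}u-u_h\|_0^2$ via Young's inequality. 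The factor $\omega^2h^{\beta+\varepsilon-1}$ (and the restriction $k\ge 1$) is inherited directly from the $h^{\min(\beta+\varepsilon-1,2\varepsilon)}$ term in \eqref{l2-err-u-2}. You do mention this absorption in passing, so the correct mechanism is present in your write-up, but the extra machinery you build around $E_4$ is unnecessary and, as proposed, not supported by the regularity you have established.
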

\zzy{\begin{proof}
    We take $z=\Pi_{V_h}u-u_h$ to get
    \begin{align*}
       & \|\sigma^{\frac 1 2}\nabla(\Pi_{V_h}u-u_h)\|_0^2+(\Pi_{V_h}u-u_h,\omega^2\eta (\Pi_{V_h}u-u_h))                                       \\
       & \quad =a(\bm\Phi_h,\Psi_h;\bm\Pi_{\bm P_h}\bm q-\bm q_h,\Pi_{V_h}u-u_h)=a(\bm\Phi_h,\Psi_h;\bm\Pi_{\bm P_h}\bm q-\bm q,\Pi_{V_h}u-u).
    \end{align*}
    It follows from the Cauchy-Schwarz inequality that
    \begin{align*}
      |(\Pi_{V_h}u-u_h,\omega^2\eta (\Pi_{V_h}u-u_h))|\le C\omega^2\|\Pi_{V_h}u-u_h\|^2_0.
    \end{align*}
    Using \eqref{pi-error} yields
    \begin{align*}
       & a(\bm\Phi_h,\Psi_h;\bm\Pi_{\bm P_h}\bm q-\bm q,\Pi_{V_h}u-u)                                                    \\
       & \quad=(\sigma^{-1}(\bm{\Pi}_{\bm P_h}\bm q-\bm q ),\bm\Phi_h)
      -(\bm{\Pi}_{\bm P_h}\bm q-\bm q,\nabla \Psi_h)
      +(\Pi_{V_h}u-u ,\nabla\cdot\bm\Phi_h)                                                                              \\
       & \quad\quad
      -\omega^2(\bm{\Pi}_{\bm P_h}\bm q-\bm q ,\nabla(\eta \Psi_h) )
      +\omega^2(\eta(\Pi_{V_h}u-u),\nabla\cdot\bm\Phi_h)                                                                 \\
       & \quad\quad+\omega^4(\eta^2(\Pi_{V_h}u-u),\Psi_h)                                                                \\
       & \quad=(\sigma^{-1}(\bm{\Pi}_{\bm P_h}\bm q-\bm q ),\bm\Phi_h-\bm\Phi)
      -(\bm{\Pi}_{\bm P_h}\bm q-\bm q,\nabla (\Psi_h-\Psi)+\nabla z)
      \\
       & \quad\quad+(\Pi_{V_h}u-u , \nabla\cdot\bm\Phi_h) -\omega^2(\bm{\Pi}_{\bm P_h}\bm q-\bm q ,\nabla(\eta \Psi_h) ) \\
       & \quad\quad
      +\omega^2(\eta(\Pi_{V_h}u-u),\nabla\cdot\bm\Phi_h)                                                                 +\omega^4(\eta^2(\Pi_{V_h}u-u),\Psi_h)=:\sum_{i=1}^6R_i.
    \end{align*}
    By \eqref{es11}, we obtain \zzy{the estimates} of $R_1$ that
    \begin{align*}
      |R_1|\le Ch^{\varepsilon}(\|\bm\Phi\|_{\varepsilon}+\|\Psi\|_{1})\|\sigma^{-\frac 1 2}(\bm{\Pi}_{\bm P_h}\bm q-\bm q)\|_0.
    \end{align*}
    Also, we apply \eqref{es11}, the triangle inequality and the Poincaré inequality to get
    \begin{align*}
      |R_2| & =|(\bm{\Pi}_{\bm P_h}\bm q-\bm q,\nabla (\Psi_h-\Psi)+\nabla (z-\Pi_0^o z))|                                                                     \\
            & =|(\Pi_{\ell}^o(\nabla\cdot\bm q)-\nabla\cdot\bm q,(\Psi_h-\Pi_{V_h}\Psi)+(\Pi_{V_h}\Psi-\Psi)+(z-\Pi_0^o z))|                                   \\
            & \le C(\|\nabla(\Psi_h-\Pi_{V_h}\Psi)\|_{0}+\|\Pi_{V_h}\Psi-\Psi\|_0+h^{\varepsilon}\|z\|_1)\|\Pi_{\ell}^o(\nabla\cdot\bm q)-\nabla\cdot\bm q\|_0 \\
            & \le Ch^{\varepsilon}(\|\Phi\|_{\varepsilon}+\|\Psi\|_1+\|z\|_1)\|\Pi_{\ell}^o(\nabla\cdot\bm q)-\nabla\cdot\bm q\|_0.
    \end{align*}
    From \eqref{estu-key}, $R_3$ and $R_5$ can be estimated as follows
    \begin{align*}
       & |R_3| +|R_5| \\ &\quad \le Ch^{\varepsilon}\|\Pi_{V_h}u-u\|_{0}(
      \|\bm\Phi\|_{\varepsilon}+
      \|\Psi\|_{1}
      )
      \\
       & \quad \quad
      +Ch\|\Pi_{V_h}u-u\|_0
      \|\nabla\cdot\bm \Phi\|_1
      +
      C\|\Pi_{V_h}u-u\|_{-1,D}(\omega^2\|\nabla\Psi\|_0+\|\nabla z\|_0).
    \end{align*}
    We find from the integration by parts and error estimate \eqref{err1} that
    \begin{align*}
      |R_4| & = \omega^2|(\nabla\cdot(\bm{\Pi}_{\bm P_h}\bm q-\bm q),\eta\Psi_h-\Pi_{\ell}^o(\eta\Psi_h))| \\
            & \le C\omega^2h\|\Pi_{\ell}^o(\nabla\cdot\bm q)-\nabla\cdot\bm q\|_0\|\nabla\Psi_h\|_0.
    \end{align*}
    From \eqref{neg_norm}, it is evident that
    \begin{align*}
      |R_6|\le C\omega^4\|\Pi_{V_h}u-u\|_{-1,D}\|\nabla\Psi\|_0.
    \end{align*}
    Combining all above results and Young's inequality with the estimate of \\$\|\Pi_{V_h}u-u_h\|_0$ in \eqref{l2-err-u-1}, we obtain that
      \begin{align*}
        \|\sigma^{\frac 1 2}\nabla(\Pi_{V_h}u-u_h)\|_0
         & \le C h^{\varepsilon} \left(
        \|\sigma^{-\frac 1 2}(\bm\Pi_{\bm P_h}\bm q-\bm q)\|_0
        +
        \|\Pi_{\ell}^o(\nabla\cdot\bm q)-\nabla\cdot\bm q\|_0\right)                            \\
         & \quad+C ( (h^{\varepsilon}+\omega^2) \|\Pi_{V_h}u-u\|_{0} +\|\Pi_{V_h}u-u\|_{-1,D}),
      \end{align*}
      and when $k\ge1$, together with the estimate of $\|\Pi_{V_h}u-u_h\|_0$ in \eqref{l2-err-u-2}, it holds
    \begin{align*}
       & \|\sigma^{\frac 1 2}\nabla(\Pi_{V_h}u-u_h)\|_0                                 \\
       & \quad \le C (h^{\varepsilon}+\omega^2 h^{\beta+\varepsilon-1}) \left(
      \|\sigma^{-\frac 1 2}(\bm\Pi_{\bm P_h}\bm q-\bm q)\|_0
      +
      \|\Pi_{\ell}^o(\nabla\cdot\bm q)-\nabla\cdot\bm q\|_0\right)                      \\
       & \quad\quad+C ( h^{\varepsilon} \|\Pi_{V_h}u-u\|_{0} +\|\Pi_{V_h}u-u\|_{-1,D}),
    \end{align*}
    which completes the proof.
  \end{proof}}

\begin{remark}[\zzya{Supercloseness} estimates]
  For $\Omega$ satisfies $\beta=2$ and  $\varepsilon=1$, when the elements $\bm{\mathcal{RT}}_{m-1}/\mathcal P_{m}$, $m>1$ are used, it holds from \eqref{the7.3_2} that
  \begin{align*}
    \|\sigma^{\frac 1 2}\nabla(\Pi_{V_h}u-u_h)\|_0\le Ch^{m+1}(\|q\|_m+\|\nabla\cdot q\|_{m}+\|u\|_{m+1}).
  \end{align*}
  For $\Omega$ satisfies $\beta=1$ and  $\varepsilon=1$, and for the element  $\bm{\mathcal{RT}}_{0}/\mathcal P_{1}$, by \eqref{the7.3_1}, the \zzya{supercloseness} estimate holds
  \begin{align*}
    \|\sigma^{\frac 1 2}\nabla(\Pi_{V_h}u-u_h)\|_0\le Ch^{2}(\|q\|_1+\|\nabla\cdot q\|_{1}+\|u\|_{2}).
  \end{align*}
\end{remark}

\zzy{Next, we state some results based on the regularity of the data and the properties of the domain $\Omega$.}

\zzy{
  \begin{remark}\label{rem_singdata}
    For the case $u\in H^{2+t}(\Omega),t> -1/2$, choosing $\bm{\mathcal{BDM}}_{k}, k \ge t$ or $\bm{\mathcal{RT}}_{k}, k \ge 1+t$ and $\mathcal P_{m},m\ge1+t$, we can conclude from \eqref{reg_con_2} that
    \begin{align*}
      \|\Pi_{V_h}u-u_h\|_0 + \|\nabla\cdot(\bm{\Pi}_{\bm P_h}\bm q-\bm q_h)\|_0 \le Ch^{2+t}(\|u\|_{2+t}+\|\bm q\|_{1+t}) & \leq Ch^{2+t}\| f\|_t, \\
      \|\bm{\Pi}_{{\bm P}_h}\bm q-\bm q_h\|_0 + \|\nabla(\Pi_{V_h}u-u_h)\|_0\le Ch^{1+t}(\|u\|_{2+t}+\|\bm q\|_{1+t})     & \leq Ch^{1+t}\| f\|_t.
    \end{align*}
  \end{remark}
}
\zzy{For the case $\Gamma_D=\partial\Omega$ and $\Omega$ is convex, $t$ can be in $(-1/2,0]$. Moreover, for $\Gamma_D=\partial\Omega$, $d=2$ and the largest angle of $\partial \Omega$ is strictly less than 90 degrees, $t$ can be in $(-1/2,1]$.}

\section{Convergence tests}\label{nume_test}
\zzy{In this section, we provide the experimental results for two categories of $f$, one is smooth and the other is singular.
}
\subsection{\zzy{Smooth data}}\label{sec_smooth}
\zzy{We} set $\Omega=(-1,1)^2$, $\sigma=\eta=1$, $\Gamma_D=\partial\Omega$, $\Gamma_N=\phi$, $\bm g=\bm 0$ and $f$ is chosen such that the exact solution is
\zzy{
  \begin{align*}
    u(x,y) = (x^2-1)(y^2-1)e^x.
  \end{align*}}
\zzy{The experiments were coded using MFEM and the system was solved using MINRES, with AMS and AMG employed as block preconditioners. We use an unstructured mesh.}

\zzy{Fig 1 correspond to $\bm{\mathcal{RT}}_{k}/\mathcal P_{m}$ with different pairs of $(k, m)$ while Fig 2 correspond to $\mathcal{BDM}_{k}/\mathcal P_{m}$. In these \zzyb{tests}, we choose $\omega = 1$.}

\zzy{It is observed that when we use elements $\bm{\mathcal{BDM}}_{4}/\mathcal P_{3},\bm{\mathcal{RT}}_{3}/\mathcal P_{3},\bm{\mathcal{RT}}_{4}/\mathcal P_{3}$, the convergence rate of $\|\Pi_{V_h}u -u_h\|_0$ attains $o(h^6)$, which exceeds the \zzya{supercloseness} rates established in our analyses. Moreover, using $\bm{\mathcal{RT}}_{0}/\mathcal P_{1}$, $\bm{\mathcal{RT}}_{1}/\mathcal P_{2}$, $\bm{\mathcal{RT}}_{2}/\mathcal P_{1}$, $\bm{\mathcal{BDM}}_{4}/\mathcal P_{3}$ yield a convergence rate for  $\|\bm{\Pi}_{\bm P_h}\bm q-\bm q_h\|_0$ that exceeds the theoretical expectations. Furthermore, when employing $\bm{\mathcal{RT}}_{2}/\mathcal P_{2}$, $\bm{\mathcal{RT}}_{3}/\mathcal P_{3}$, $\bm{\mathcal{RT}}_{3}/\mathcal P_{2}$, $\bm{\mathcal{RT}}_{4}/\mathcal P_{3}$, we observe a convergence rate for $\|\nabla(\Pi_{V_h}u -u_h)\|_0$ that surpasses the theoretical results.}

\zzy{The different convergence rates of $\|\nabla\cdot(\bm{\Pi}_{\bm P_h}\bm q-\bm q_h)\|_0$ in \Cref{table3} \zzyb{come} from the selections of $\omega$,} and the numerical result confirm our analysis in \Cref{rem_su_div_q}. In addition to the above situations, the numerical results match the remarks and estimates perfectly, and strongly confirm our theoretical analysis.

\begin{table}[H]\tiny
  \centering
  \scalebox{1.5}{%
    \begin{tabular}{c|c|c|c|c}
      \Xhline{1pt}
      \multirow{2}{*}{DOF} & \multicolumn{2}{c|}{$\omega = 0$} & \multicolumn{2}{c}{$\omega = 1$}                     \\
      \cline{2-5}
                           & Error                             & Rate                             & Error      & Rate \\
      \hline

      87                   & 6.1626e-03                        & -                                & 1.2534e-02 & -    \\
      313                  & 8.3053e-04                        & 2.89                             & 2.8069e-03 & 2.16 \\
      1185                 & 1.0684e-04                        & 2.96                             & 7.2319e-04 & 1.96 \\
      4609                 & 1.3478e-05                        & 2.99                             & 1.8327e-04 & 1.98 \\
      18177                & 1.6873e-06                        & 3.00                             & 4.5991e-05 & 1.99 \\
      72193                & 2.1082e-07                        & 3.00                             & 1.1509e-05 & 2.00 \\
      \Xhline{1pt}
    \end{tabular}%
  }
  \caption{Results of $\|\nabla\cdot(\bm{\Pi}_{\bm P_h}\bm q-\bm q_h)\|_0$ for $\bm{\mathcal{BDM}}_1/\mathcal{P}_2$ elements with different $\omega$}
  \label{table3}
\end{table}

\begin{figure}[H]
  \centerline{
    \hbox{\includegraphics[width=1\linewidth]{./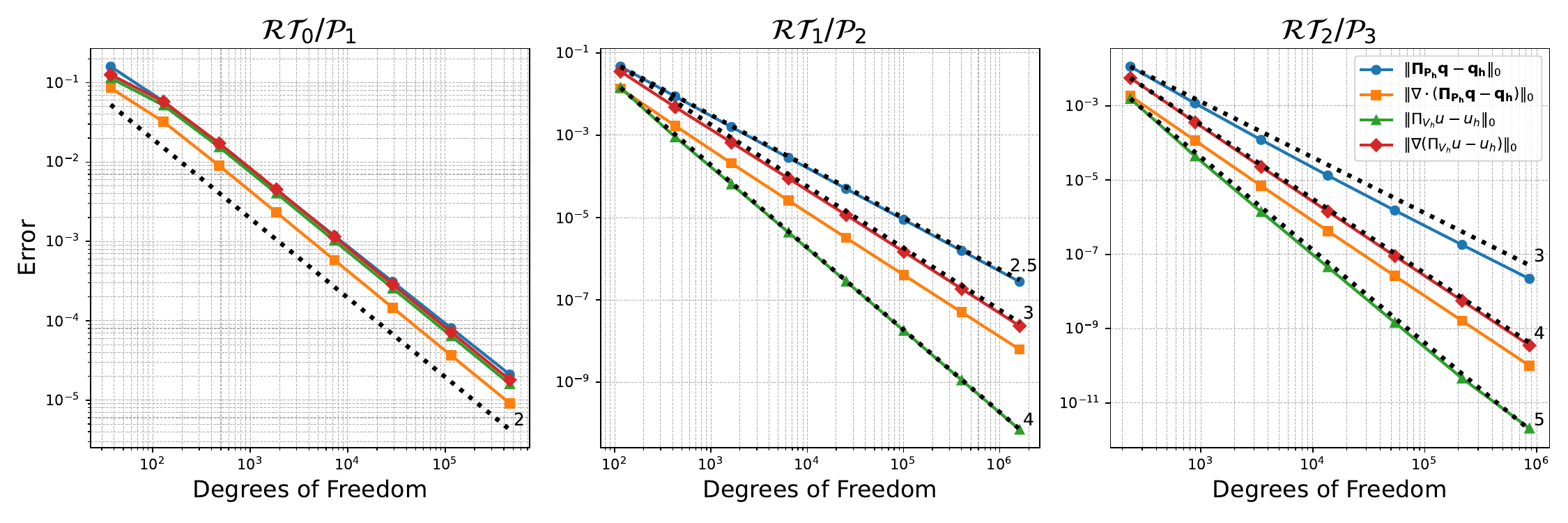}}}
  \centerline{
    \hbox{\includegraphics[width=1\linewidth]{./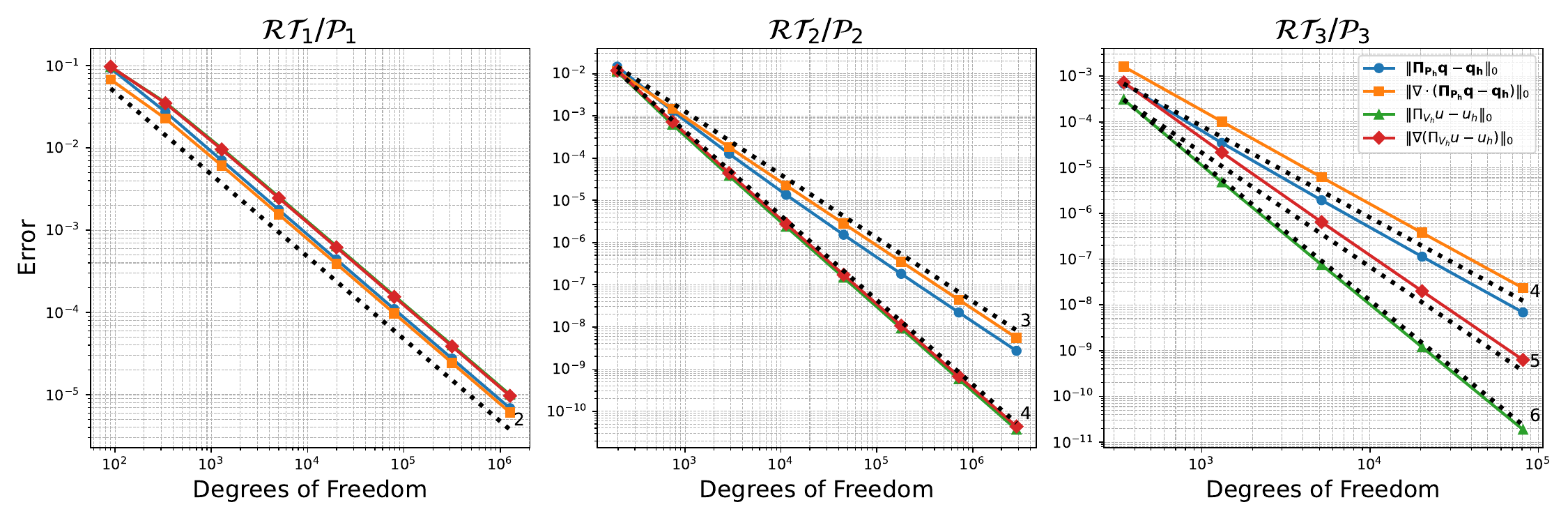}}}
  \centerline{
    \hbox{\includegraphics[width=1\linewidth]{./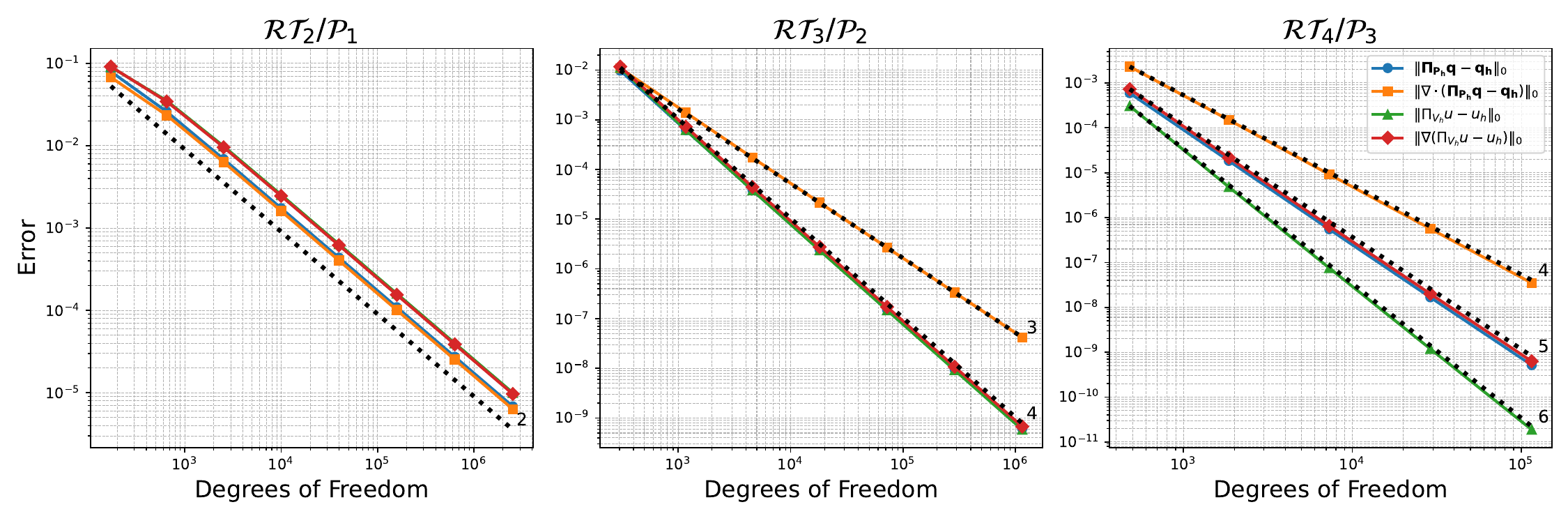}}}
  \caption{Results for $\bm{\mathcal{RT}}$ elements}
\end{figure}
\begin{figure}[H]
  \centerline{
    \hbox{\includegraphics[width=1\linewidth]{./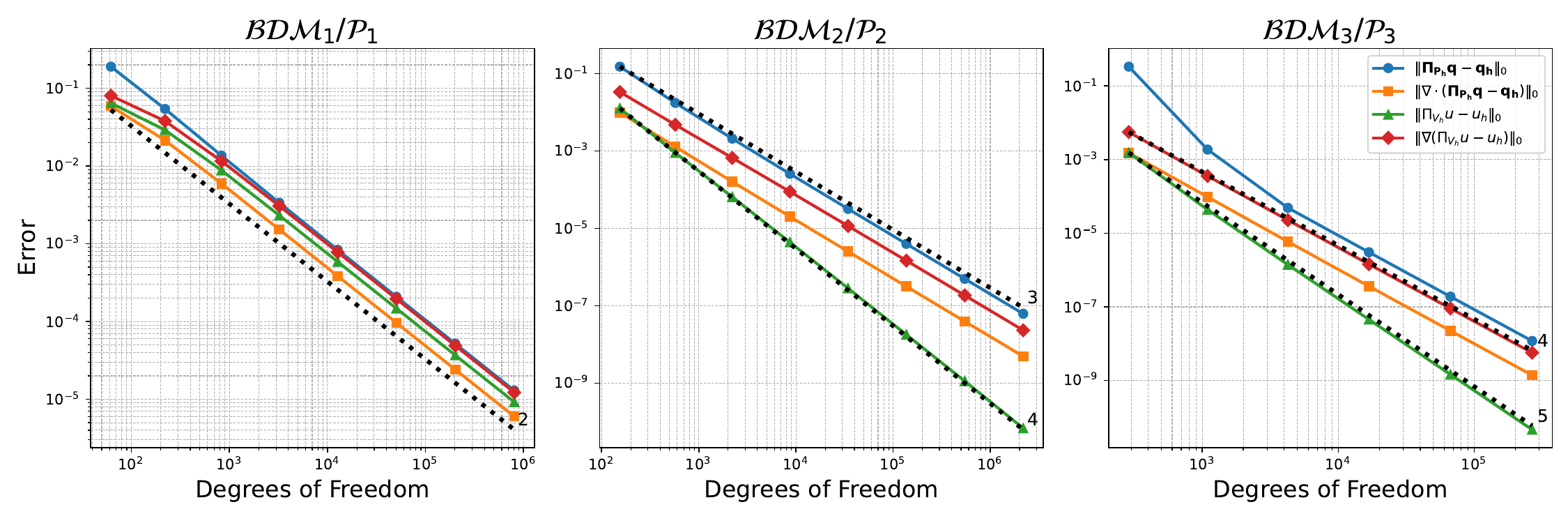}}}
  \centerline{
    \hbox{\includegraphics[width=1\linewidth]{./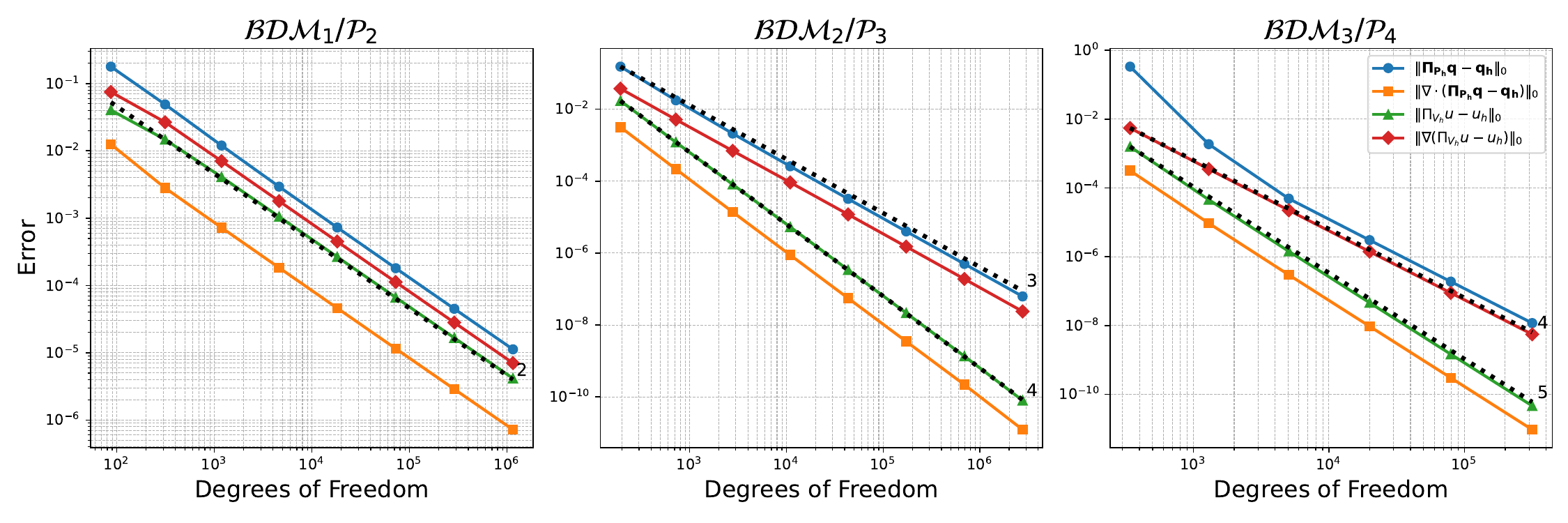}}}
  \centerline{
    \hbox{\includegraphics[width=1\linewidth]{./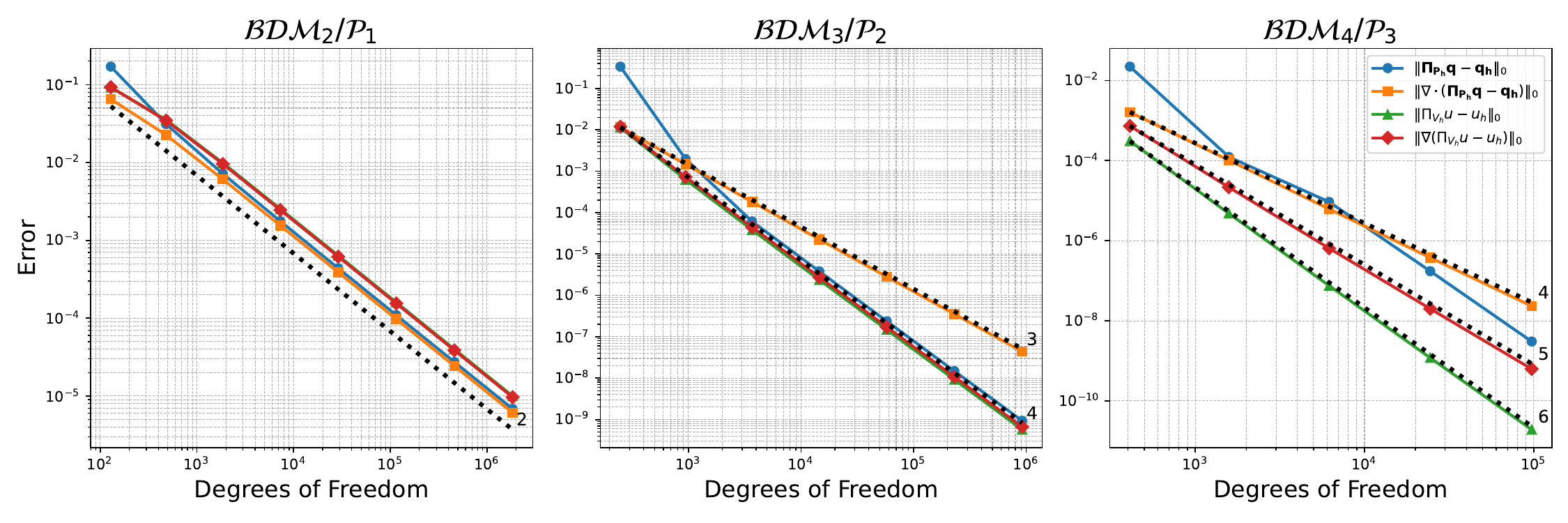}}}
  \caption{Results for $\bm{\mathcal{BDM}}$ elements}
\end{figure}

\zzya{Furthermore, we conducted experiments with different wavenumber $\omega$ and a functional form of $\eta$ and $\sigma$. We set $\omega = 2, 4, 6, 8$, $\eta(x,y) = (x^2-x)(y^2-y)$ and $\sigma(x,y) = x^2+y^2+1$. As shown in Fig 3, in the cases of $\omega = 2,4$, the numerical results are consistent with our analysis. In the cases of $\omega = 6,8$, the errors converger slowly at the coarse meshes. As we stated in \Cref{rem_hiwave}, the large wavenumber cases require a different scheme.}
\begin{figure}[H]
  \centerline{
    \hbox{\includegraphics[width=1\linewidth]{./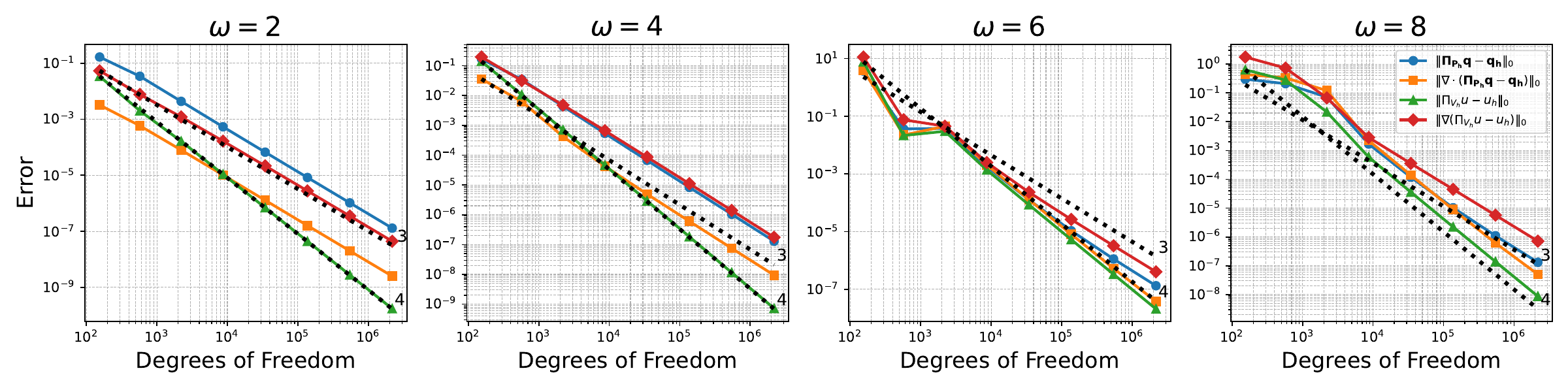}}}
  \caption{\zzya{Results for $\bm{\mathcal{BDM}}_2/\mathcal{P}_2$ elements}}
\end{figure}

\subsection{\zzy{Singular data}}
\zzy{For singular data, we set $\Omega=(-1,1)^2$, $\sigma=\eta=1$, $\omega = 0$, $\Gamma_D=\partial\Omega$, $\Gamma_N=\phi$, $\bm g=\bm 0$ and $u(x,y) = v(x)w(y)$, where
  \begin{align*}
     & v(x) = x|x|^{1/2+1/4}(1-x^2),\quad w(y)=(1-y^2),          \\
     & f(x,y) = \frac{sign(x)(165x^2-21)}{16x^{1/4}}w(y)+ 2v(x).
  \end{align*}
  Note that $u\notin H^{2+t}(\Omega), \bm q = \sigma \nabla u\notin \bm H^{1+t}(\Omega),\nabla \cdot\bm q\notin H^{t}(\Omega)$ for $t\ge 1/4$. For the convenience of dealing with singular integrals, we rely on a structured mesh. The numerical results are presented in \zzyb{Fig 4 and Fig 5}. \zzyb{Moreover, the theoretical convergence rates are shown in Talbles 5 and 6.}}

\begin{figure}[H]
  \centerline{
    \hbox{\includegraphics[width=1\linewidth]{./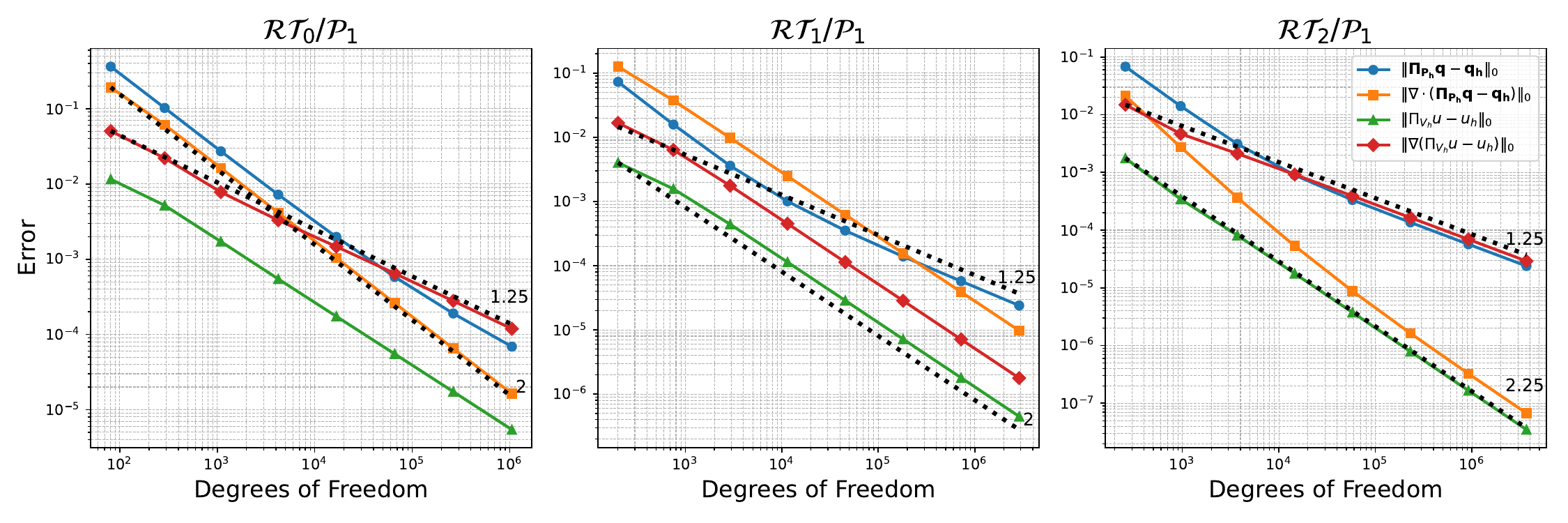}}}
  \caption{Results for $\bm{\mathcal{RT}}$ elements}
\end{figure}
\begin{table}[H]
  \centering
  \scalebox{0.9}{
    \begin{tabular}{c|c|c|c|c|c}
      \Xhline{1pt}
      {$\bm P_h$}                  & {$V_h$}        & {$\|\bm{\Pi}_{\bm P_h}\bm q-\bm q_h\|_0$} & {$\|\nabla\cdot(\bm{\Pi}_{\bm P_h}\bm q-\bm q_h)\|_0$} & {$\|\Pi_{V_h}u-u_h\|_0$} & {$\|\nabla(\Pi_{V_h}u-u_h)\|_0$} \\
      \hline

      $\bm{\bm{\mathcal{RT}}}_{0}$ & $\mathcal P_1$ & (1.25)                                    & 2.00                                                   & 1.25                     & 1.25                             \\
      \hline
      $\bm{\bm{\mathcal{RT}}}_{1}$ & $\mathcal P_1$ & 1.25                                      & 2.00                                                   & 2.00                     & (1.25)                           \\
      \hline
      $\bm{\bm{\mathcal{RT}}}_{1}$ & $\mathcal P_2$ & 1.25                                      & 2.25                                                   & 2.25                     & 1.25                             \\

      \Xhline{1pt}
    \end{tabular}}
  \caption{Theoretical convergence rates. The numbers in $(\cdot)$ indicate that the convergence rates obtained from numerical experiments are better than the theoretical predictions. The convergence rates observed in numerical experiments align with the theoretical rates in \Cref{rem_singdata} when employing $\bm{\bm{\mathcal{RT}}}_{1}/\mathcal P_2$.}
\end{table}

\begin{figure}[H]
  \centerline{
    \hbox{\includegraphics[width=1\linewidth]{./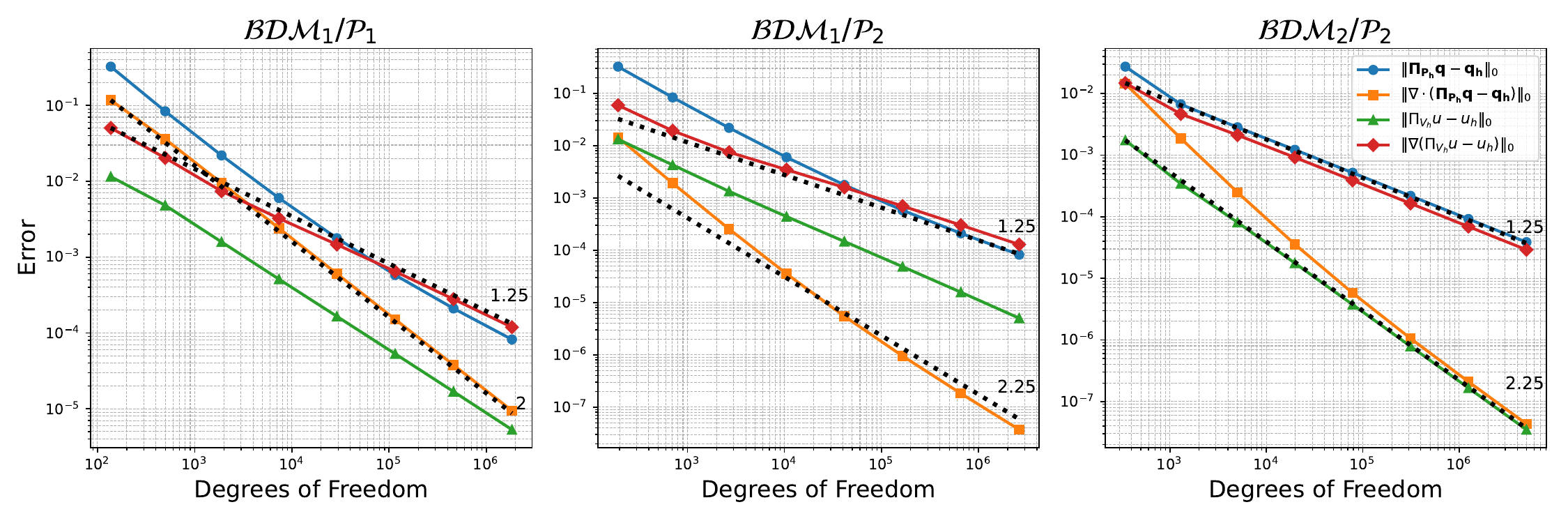}}}
  \caption{Results for $\bm{\mathcal{BDM}}$ elements}
\end{figure}
\begin{table}[H]
  \centering
  \scalebox{0.9}{
    \begin{tabular}{c|c|c|c|c|c}
      \Xhline{1pt}
      {$\bm P_h$}              & {$V_h$}        & {$\|\bm{\Pi}_{\bm P_h}\bm q-\bm q_h\|_0$} & {$\|\nabla\cdot(\bm{\Pi}_{\bm P_h}\bm q-\bm q_h)\|_0$} & {$\|\Pi_{V_h}u-u_h\|_0$} & {$\|\nabla(\Pi_{V_h}u-u_h)\|_0$} \\
      \hline

      $\bm{\mathcal{BDM}}_{1}$ & $\mathcal P_1$ & (1.25)                                    & 2.00                                                   & (1.25)                   & 1.25                             \\
      \hline
      $\bm{\mathcal{BDM}}_{1}$ & $\mathcal P_2$ & 1.25                                      & 2.25                                                   & 1.25                     & 1.25                             \\
      \hline
      $\bm{\mathcal{BDM}}_{2}$ & $\mathcal P_2$ & 1.25                                      & 2.25                                                   & 2.25                     & 1.25                             \\
      \Xhline{1pt}
    \end{tabular}}
  \caption{Theoretical convergence rates. The numbers in $(\cdot)$ indicate that the convergence rates obtained from numerical experiments are better than the theoretical predictions.}
\end{table}

\subsection{\zzya{Superconvergence}}
\zzya{We try to apply postprocessing techniques to obtain superconvergence error estimates based on supercloseness results. For example, the superconvergence of $\|\nabla(u^*_h - u)\|_0$ can be derived using the following element-by-element postprocessing \cite{Cockbur2010}: Find $(u^*_h, \lambda_h)\in \mathcal P_{m+1}(T)\times \mathcal P_{0}(T)$ such that for all $(v_h, \zeta_h)\in \mathcal P_{m+1}(T)\times \mathcal P_{0}(T)$
  \begin{align*}
    (\sigma \nabla u^*_h, \nabla v_h)_T + (\lambda_h, v_h)_T & = (\bm q_h, \nabla v_h)_T, \\
    (u^*_h, \zeta_h)_T                                       & =(u_h, \zeta_h)_T,
  \end{align*}
  where $(\bm q_h, u_h)\in\bm{\mathcal{BDM}}_{k}(\text{ or }\bm{\mathcal{RT}}_{k})/\mathcal P_m$ is the solution of \eqref{lsfem}.}
  
  \zzya{We performed experiments using the same exact solution as in \Cref{sec_smooth}, and set $\omega = 1$, $\eta(x,y) = (x^2-x)(y^2-y)$ and $\sigma(x,y) = x^2+y^2+1$. As shown in Fig 6, when using elements $\bm{\mathcal{BDM}}_{k}/\mathcal P_k, \bm{\mathcal{RT}}_{k}/\mathcal P_k, k = 1,2,3$, $\|\nabla(u^*_h - u)\|_0$ exhibits superconvergence. To obtain superconvergence results of $\|\nabla\cdot(\bm q^*_h -\bm q)\|_0$ and $\|u^*_h - u\|_0$, we need to design more subtle postprocessing. This topic will be investigated in future research.
}
\begin{figure}[H]\label{BMD_post}
  \centerline{
    \hbox{\includegraphics[width=0.8\linewidth]{./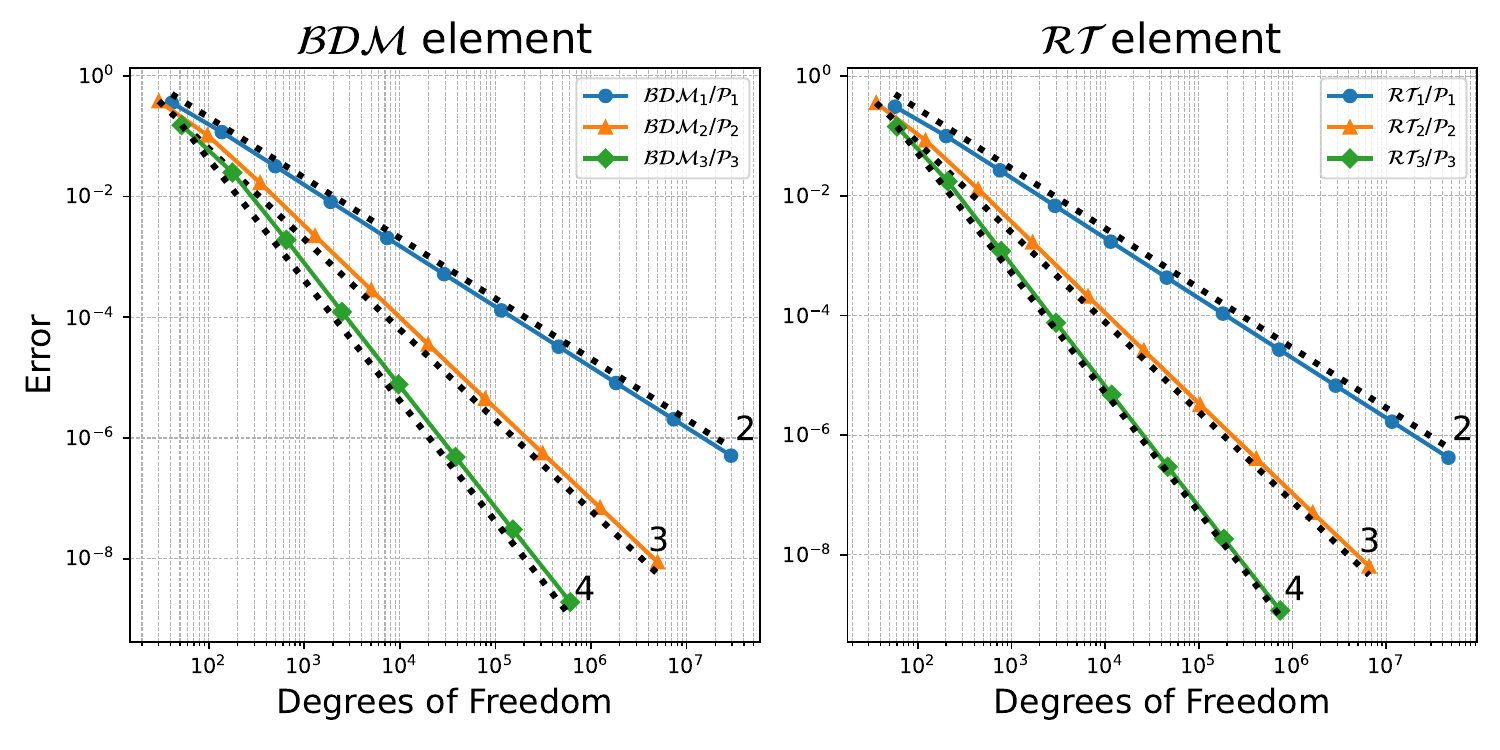}}}
  \caption{\zzya{Superconvergence from postprocessing}}
\end{figure}

\section*{Declarations}

\subsection*{Funding}
Gang Chen and Zheyuan Zhang are supported by National Natural Science Foundation of China (NSFC) under grant 121713413 and 12422115, and Opening Foundation of Agile and Intelligent Computing Key Laboratory of Sichuan Province.
Fanyi Yang is supported by \zzyb{Natural Science Foundation of Sichuan under grant 2023NSFSC1323.}

\subsection*{Conflict of Interest}
The authors declare that they have no conflict of interest.

\subsection*{Data availability}
Data sharing not applicable to this article as no datasets were generated or analysed during the current study.

\bibliographystyle{spmpsci}
\bibliography{reference}
\end{document}